\newcommand{\crefnames}[3]{%
	\@for\next:=#1\do{%
		\expandafter\crefname\expandafter{\next}{#2}{#3}%
	}%
}
\setlist[itemize]{wide = 0pt, labelwidth = 2em, labelsep*=0em, itemindent = 0pt, leftmargin = \dimexpr\labelwidth + \labelsep\relax, noitemsep,topsep = 1ex,}
\setlist[enumerate]{wide = 0pt, labelwidth = 2em, labelsep*=0em, itemindent = 0pt, leftmargin = \dimexpr\labelwidth + \labelsep\relax, noitemsep,topsep = 1ex}
\theoremstyle{plain}
\renewcommand{\thethmx}{\Alph{thmx}} 
\newtheorem{theorem}{Theorem}[section]
\newtheorem{proposition}[theorem]{Proposition}
\newtheorem{corollary}[theorem]{Corollary}
\theoremstyle{definition}
\newtheorem{definition}[theorem]{Definition} 
\theoremstyle{remark}
\newtheorem{remark}[theorem]{Remark} 
\newtheorem{example}[theorem]{Example}
\newtheorem{question}[theorem]{Question}
\numberwithin{equation}{section}  
\theoremstyle{plain}
\newlist{thmlist}{enumerate}{1}
\setlist[thmlist]{wide = 0pt, labelwidth = 2em, labelsep*=0em, itemindent = 0pt, leftmargin = \dimexpr\labelwidth + \labelsep\relax, noitemsep,topsep = 1ex, font=\normalfont, label=(\roman*), ref=\thetheorem.(\roman{thmlisti})}
\newlist{thmenum}{enumerate}{1} % also creates a counter called 'propenumi'
\setlist[thmenum]{wide = 0pt, labelwidth = 2em, labelsep*=0em, itemindent = 0pt, leftmargin = \dimexpr\labelwidth + \labelsep\relax, noitemsep,topsep = 1ex, font=\normalfont, label=(\roman*), ref=\thethmx.(\roman{thmenumi})}%{label=\alph*), ref=\thethmx~(\alph*)}
\newlist{corlist}{enumerate}{1} % also creates a counter called 'propenumi'
\setlist[corlist]{wide = 0pt, labelwidth = 2em, labelsep*=0em, itemindent = 0pt, leftmargin = \dimexpr\labelwidth + \labelsep\relax, noitemsep,topsep = 1ex, font=\normalfont, label=(\roman*), ref=\thecorx.(\roman{corlisti})}%{label=\alph*), ref=\thethmx~(\alph*)}
\crefname{lemma}{Lemma}{Lemmas} 
\crefname{conjecture}{Conjecture}{Conjectures}
\crefname{theorem}{Theorem}{Theorems}
\crefname{proposition}{Proposition}{Propositions}
\crefname{example}{Example}{Examples}
\crefname{figure}{Figure}{Figures}
\crefname{definition}{Definition}{Definitions}
\crefname{remark}{Remark}{Remarks}
\crefname{corollary}{Corollary}{Corollaries}
\crefname{corx}{Corollary}{Corollaries}
\crefname{problem}{Problem}{Problems}
\crefname{thmx}{Theorem}{Theorems}
\crefname{claim}{Claim}{Claims}
\crefname{assumption}{Assumption}{Assumptions}
\crefname{main}{Main Theorem}{Main Theorems}
\crefname{question}{Question}{Questions}
\newcommand{\C}{\mathbb{C}}
\newcommand{\R}{\mathbb{R}}
\newcommand{\Q}{\mathbb{Q}}
\newcommand{\cJ}{\mathcal{J}}
\newcommand{\sV}{\mathcal{V}}
\newcommand{\orb}{\mathrm{orb}}
\newcommand{\bbmk}{\mathbbm{k}}
\newcommand{\K}{\mathbb{K}}
\newcommand{\Z}{\mathbb{Z}}
\newcommand{\Fp}{\mathbb{F}_p}
\newcommand{\IP}{\mathfrak{p}}
\newcommand{\Ann}{\mathrm{Ann}}
\newcommand{\bF}{\mathbb{F}}
\newcommand{\rank}{{\mathrm{rank}}}
\newcommand{\bw}{{\textbf{w}}}
\newcommand{\Trop}{{\mathrm{Trop}}}
\newcommand{\trop}{{\mathrm{trop}}}
\newcommand{\lcm}{{\mathrm{lcm}}}
\newcommand{\Hom}{{\mathrm{Hom}}}
\begin{document}

\title[BNSR invariants and integral tropical varieties]{Bieri-Neumann-Strebel-Renz invariants and tropical varieties of integral homology jump loci}

\author{Yongqiang Liu}
\address{Institute of Geometry and Physics, University of Science and Technology of China, Hefei 230026, P.R. China} 
\email{liuyq@ustc.edu.cn}

\author{Yuan Liu}
\address{Institute of Geometry and Physics, University of Science and Technology of China, Hefei 230026, P.R. China}
\email{yuanliu@ustc.edu.cn}

\date{\today}
\begin{abstract}
Papadima and Suciu studied the relationship between the Bieri-Neumann-Strebel-Renz (short as BNSR) invariants of spaces and the homology jump loci of rank one local systems. Recently, Suciu improved these results using the tropical variety associated to the  homology jump loci of complex rank one local systems. In particular, the translated positive-dimensional component of homology jump loci can be detected by its tropical variety. In this paper, we generalize Suciu's results to  integral coefficients and give a better upper bound for the BNSR invariants. Then we provide applications mainly to K\"ahler groups. Specifically, we classify the K\"ahler group contained in a large class of 
groups, which we call the weighted right-angled Artin groups. This class of groups comes from the edge-weighted finite simple graphs and is a natural generalization of the right-angled Artin groups. 
%Our results are based on the classification of the K\"ahler right-angled Artin group given by Dimca, Papadima, and Suciu. 
\end{abstract}
\maketitle

\section{Introduction}
\subsection{Background}
In 1987, a powerful group theoretic invariant was introduced by Bieri, Neumann and Strebel in \cite{BNS}, now called the BNS invariant. This invariant is a generalization of a former invariant studied by Bieri and Strebel in \cite{BS80,BS81} for metabelian groups. The BNS invariant was later generalized to higher degrees for groups by Bieri and Renz \cite{BR88} and from groups to spaces by  Farber, Geoghegan, and Sch\"utts  in \cite{FGS}. These invariants are called the Bieri-Neumann-Strebel-Renz (short as BNSR) invariants, which record the geometric finiteness properties of  the spaces. 

The computation of the BNSR invariant is extremely difficult. Even in degree $1$ case, it is only known for restricted types of groups, such as metabelian groups \cite{BS80,BS81,BG84}, one relator groups \cite{Brown}, right-angled Artin groups \cite{MV,MMV}, K\"ahler groups \cite{Delzant10} and pure braid groups \cite{KMM15}, etc.   Papadima and Suciu  in \cite{PapaSuciu10} initiated the project of looking for approximations of the BNSR invariants, which (1) are more computable and (2) are rationally defined upper bounds for the BNSR invariants. These bounds are derived from the homology jump loci, defined using the homology of the space with field coefficients in rank one local systems.  Recently, Suciu improved this bound in \cite{Suciu21} using the tropical variety associated to the  homology jump loci of  rank one local systems with complex coefficients. In particular, the translated positive-dimensional component of homology jump loci can be detected by its tropical variety. 

In this paper, we follow Suciu's approach in \cite{Suciu21}  and study the tropical varieties of homology jump loci with integral coefficients. The complement of these tropical varieties gives better upper bounds for the BNSR invariants.

\subsection{Main results}
Let $X$ be a connected finite CW complex with $\pi_1(X)=G$. Let $\mathrm{S}(G)$ denote the unit sphere in the real vector space $\mathrm{Hom}(G;\R) \cong H^1(X; \R)$. In this paper, \textit{we always assume that $\dim H^1(X; \R)>0$}. Set $H=H_1(X; \Z)$, which is the abelianization of $G$. Then it is clear that $\mathrm{S}(G)=\mathrm{S}(H)$.
We say $\chi \in 
\mathrm{S}(G)$ is rational if the image of $\chi$ is isomorphic to $\Z$.
For any integer $k\geqslant 0$, the $k$-th BNSR invariant $\Sigma^k(X; \Z)$ (see \cref{def BNSR}) forms a decreasing sequence of open subsets of $\mathrm{S}(G)$ as $k$ increases. 

Let $\bbmk$ be a coefficient field.  The homology jump ideal $ \cJ^{\leqslant k}(X; \Z)$ (resp. $ \cJ^{\leqslant k}(X; \bbmk)$) 
can be defined via the cellular chain complex of the \textit{maximal abelian cover} of $X$ with coefficients in $\Z$ (resp. $\bbmk$), as a complex of $\Z H$ (resp. $\bbmk H$) modules (see \cref{defn:jump ideal}). In fact, $ \cJ^{\leqslant k}(X; \Z)$ (resp. $ \cJ^{\leqslant k}(X; \bbmk)$)
is an ideal in $\Z H$ (resp. $\bbmk H$).
When $\bbmk$ is an algebraically closed field, the variety of the ideal $ \cJ^{\leqslant k}(X; \bbmk)$ is 
exactly the homology jump loci $\sV^{\leqslant k}(X;\bbmk)$, i.e, the collection of the rank one $\bbmk$-coefficient local systems on $X$ such that its homology is non-zero for some degree in the range $[0,k]$ (see \cref{def homology jump loci}). 
We refer the readers to Suciu's survey paper \cite{Suciu09} for a comprehensive background on this topic.

For any ideal $I \subset \Z H$ (resp. $\bbmk H$), 
one can define its tropicalization $\Trop_\Z (I)$  (resp. $\Trop_\bbmk(I)$)  
in $\mathrm{Hom}(H;\R)$. Since tropical varieties over $\Z$ are relatively uncommon, we provide a detailed study in \cref{sec:tropical}. 
%In this paper, besides the field coefficients we also use the tropical variety associated with integer coefficients. 
For any subset $Z\subseteq \mathrm{Hom}(H ;\R)$, denote the image of $Z-\{0\}$ in $\mathrm{S}(H)$ under natural projection as $\mathrm{S}(Z)$. Our main result reads as follows.

\begin{theorem}\label{thm:compare_two_bnsr}
%Let $X$ be a connected finite CW-compelx with  $\pi_1(X)=G$. Then we have 
With the above notations and assumptions, we have
\begin{equation} \label{main inclusion}
   \Sigma^k(X;\Z)\subseteq \mathrm{S}\big(\Trop_{\Z}(\mathcal{J}^{\leqslant k}(X; \Z))\big)^c\subseteq \mathrm{S}\big(\Trop_{\bbmk}(\mathcal{J}^{\leqslant k}(X; \bbmk))\big)^c 
\end{equation}
Moreover, $\mathrm{S}\big(\Trop_{\Z}(\mathcal{J}^{\leqslant k}(X; \Z))\big)$ and $\mathrm{S}\big(\Trop_{\bbmk}(\mathcal{J}^{\leqslant k}(X; \bbmk))\big)$ are both finite unions of rationally defined convex cones over polyhedrons on the sphere $\mathrm{S}(G)$. In particular, they both have dense rational points. 
\end{theorem}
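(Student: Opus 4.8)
The theorem has two parts: the chain of inclusions \eqref{main inclusion}, and the structural statement that the tropical sets are finite unions of rationally defined cones over polyhedra. I will discuss how I would attack each.

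For the second inclusion $\mathrm{S}(\Trop_\Z(\cJ^{\leqslant k}(X;\Z)))^c \subseteq \mathrm{S}(\Trop_\bbmk(\cJ^{\leqslant k}(X;\bbmk)))^c$, the key point is a comparison between the integral jump ideal and its field reduction. One expects $\cJ^{\leqslant k}(X;\bbmk)$ to be (up to radical, or up to the relevant tropicalization) the image of $\cJ^{\leqslant k}(X;\Z)$ under $\Z H \to \bbmk H$, because the jump ideals are defined by Fitting-type minors of the cellular boundary maps of the maximal abelian cover, and minors commute with ring base change. So the plan is: first establish $\cJ^{\leqslant k}(X;\Z)\cdot \bbmk H \subseteq \cJ^{\leqslant k}(X;\bbmk)$ (in fact equality) from the definitions in \cref{defn:jump ideal}; then invoke the general principle from \cref{sec:tropical} that for an ideal $I\subseteq\Z H$, the field tropicalization $\Trop_\bbmk(I\cdot\bbmk H)$ is contained in $\Trop_\Z(I)$ — intuitively, $\Trop_\Z$ records valuations coming from all residue characteristics simultaneously (including the generic one giving $\Q$), so it is the larger set. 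Taking complements and passing to $\mathrm{S}(-)$ then gives the inclusion. The main subtlety here is purely about the tropical formalism over $\Z$: one must be careful that $\Trop_\Z$ as defined in \cref{sec:tropical} genuinely dominates each $\Trop_\bbmk$, and I would lean on whichever valuation-theoretic or Newton-polytope characterization of $\Trop_\Z$ is given there.

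For the first inclusion $\Sigma^k(X;\Z)\subseteq \mathrm{S}(\Trop_\Z(\cJ^{\leqslant k}(X;\Z)))^c$, I would follow Suciu's strategy from \cite{Suciu21} but with $\Z$-coefficients throughout. Fix $\chi\in\Sigma^k(X;\Z)$; by the algebraic characterization of the BNSR invariants (the monoid $\Z G_\chi$ or the Novikov-type ring $\widehat{\Z H}_\chi$ version, via \cref{def BNSR}), the chain complex of the maximal abelian cover becomes, after completing with respect to the direction $\chi$, chain-homotopy equivalent to a complex that is finitely generated free through degree $k$ — equivalently the relevant homology modules vanish after the Novikov completion. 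This vanishing forces the Fitting ideals $\cJ^{\leqslant k}(X;\Z)$ to contain an element that is a unit in the Novikov ring $\widehat{\Z H}_\chi$, i.e.\ an element whose $\chi$-leading term is a monomial (a unit in $\Z H$ being $\pm$ a group element). But an ideal containing such an element cannot have $\chi$ in its $\Z$-tropicalization: the defining condition for $\chi\in\Trop_\Z(I)$ is that the $\chi$-initial degeneration of $I$ contains no monomial, and here it does. Hence $\chi\notin\Trop_\Z(\cJ^{\leqslant k}(X;\Z))$, and descending to $\mathrm{S}(H)$ gives $\chi\notin\mathrm{S}(\Trop_\Z(\cdots))$. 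This step is where I expect the real work to lie: matching the Novikov-ring reformulation of $\Sigma^k$ with the precise definition of $\Trop_\Z$, and handling the fact that $\Z$ is not a field (so "unit after completion" must be argued via leading terms, not just non-vanishing of some valuation).

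For the structural claim, the source of rationality and piecewise-linearity is that $\cJ^{\leqslant k}(X;\Z)$ is a \emph{finitely generated} ideal with generators in $\Z H$ having integer exponent vectors. The plan is to appeal to the finiteness theory for tropical varieties developed in \cref{sec:tropical}: for a finitely generated ideal $I\subseteq\Z H$, $\Trop_\Z(I)$ is a finite union of rational polyhedral cones, being the common refinement / union of the (rational) tropical hypersurfaces associated to a finite generating set — each such hypersurface being the codimension-$\geqslant 1$ skeleton of the normal fan of an integral Newton polytope, hence a rational polyhedral complex, and the cone structure coming from the fact that the jump ideals are homogeneous with respect to scaling of $\chi$. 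Since $\mathrm{S}(-)$ intersects these rational cones with the unit sphere, we get finite unions of rationally defined convex cones over polyhedra on $\mathrm{S}(G)$, and density of rational points is immediate because a rational polyhedral cone of positive dimension has rational points dense in it. The same argument applies verbatim to $\Trop_\bbmk(\cJ^{\leqslant k}(X;\bbmk))$ once one knows that ideal is finitely generated (it is, being an ideal in the Noetherian ring $\bbmk H$, and explicitly generated by finitely many minors). The only thing to verify carefully is that the conventions in \cref{sec:tropical} indeed yield \emph{rational} polyhedra — which they will, since all input data (group-element exponents) are integral.
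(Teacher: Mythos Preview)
Your plan for the second inclusion is fine and matches the paper's one-line argument: $\cJ^{\leqslant k}(X;\bbmk)=\cJ^{\leqslant k}(X;\Z)\otimes_\Z\bbmk$, and then $\Trop_\bbmk\subseteq\Trop_\Z$ is immediate from \cref{def tropical ring}.

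For the first inclusion there is a real gap. You invoke ``the Novikov-type ring $\widehat{\Z H}_\chi$ version'' of the BNSR invariant, but \cref{def BNSR} and the Sikorav reformulation are stated over $\Z G_\chi$ (respectively $\widehat{\Z G}_\chi$), with $G$ the full fundamental group. Nothing in your outline explains why finite $k$-type of $C_*(\widetilde{X};\Z)$ over $\Z G_\chi$ forces the \emph{abelian-cover} complex $C_*(X^H;\Z)$ to become acyclic over $\widehat{\Z H}_\chi$; that passage from $G$ to $H$ is exactly the nontrivial content, and you have conflated the two rings. The paper avoids this by running the contrapositive: starting from a nonzero $\chi\in\Trop_\Z(\cJ^{\leqslant k}(X;\Z))$, one uses the Bieri--Groves--Strebel description (\cref{thm:BG_main_theorem} together with \cref{prop trop equals Sigma complement}) to produce a minimal prime $\IP\supseteq\cJ^{\leqslant k}(X;\Z)$ and a valuation $w$ on $\Z H/\IP$ with $w|_H=\chi$, passes to the fraction field $\K$ of $(\Z H/\IP)/w^{-1}(\infty)$, obtains a character $\rho\colon G\to\K^*$ with $H_i(X;L_\rho)\neq 0$ for some $i\leqslant k$, and then applies Papadima--Suciu's \cref{thm PS} to conclude $\chi\notin\Sigma^k(X;\Z)$. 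The black box \cref{thm PS} is precisely what bridges rank-one local systems (abelian data) and $\Sigma^k$ (universal-cover data).

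Your argument for the structural claim is incorrect over $\Z$. You describe the integral tropical hypersurface of $f\in\Z H$ as the codimension-$\geqslant 1$ skeleton of the normal fan of its Newton polytope, but by \cref{def tropical ring} the condition is $\mathrm{in}_\chi(\langle f\rangle)\neq\Z H$, and $\mathrm{in}_\chi(f)$ can be a \emph{single} monomial with a non-unit integer coefficient. For $f=x_1+x_2-2$ and any $\chi$ in the open positive quadrant one has $\mathrm{in}_\chi(f)=-2$, so the whole quadrant lies in $\Trop_\Z(\langle f\rangle)$; see \cref{example trop comparision} and \cref{fig:z-trop}. Thus integral tropical hypersurfaces are not codimension-one fans, and the ``intersect hypersurfaces of a finite generating set'' picture does not give the result (you would also need a tropical-basis statement over $\Z$, which you do not supply). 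The paper instead identifies $\mathrm{S}(\Trop_R(I))$ with $\Sigma^c(RH/I)$ via \cref{prop trop equals Sigma complement} and then cites Bieri--Groves (\cref{thm BG dense}), which establishes the rational-polyhedral structure of $\Sigma^c(M)$ for modules over any Dedekind domain and so handles $R=\Z$ and $R=\bbmk$ simultaneously.
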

\begin{remark}
%\begin{enumerate}[label=\textbf{(\arabic*)}]
%\item 
When $k=1$, the first inclusion in \cref{main inclusion} is essentially due to Bieri, Groves and Stebel in \cite{BS80,BS81,BG84}. Moreover, they showed that if $G$ is a finitely generated metabelian group, the first inclusion becomes an equality (for $k=1$). For more details, see \cref{subsection comparing BGS}.
On the other hand, the first inclusion in \cref{main inclusion} could be strict, see \cref{example 2}.

%since it is not always true that  $\Sigma^k(X;\Z)$ is the intersection of $\mathrm{S}(G)$ with a finite union of open rational polyhedral cones \cite{BNS}, this first inclusion in (\cref{main inclusion}) could be strict.

%It happens, for example, if $G'$ is not finitely generated as a $\bZ H_{\chi_{H}}$-module. 
%This can be seen from the rationality of the abelian BNSR invariant (\cref{prop:rationality}) 
\end{remark}

\cref{thm:compare_two_bnsr} is inspired by Suciu's recent work \cite{Suciu21}. In particular, \cref{thm:compare_two_bnsr} recovers \cite[Theorem 1.1]{Suciu21}, which asserts that
$$\Sigma^k(X;\Z)\subseteq  \mathrm{S}\big(\Trop_{\C}(\mathcal{J}^{\leqslant k}(X;\C))\big)^c.$$ See \cref{rem compare Suciu} for more details. 
One can adapt Suciu's proof to show that $$ \Sigma^k(X;\Z)\subseteq  \mathrm{S}\big(\Trop_{\bbmk}(\mathcal{J}^{\leqslant k}(X; \bbmk))\big)^c $$
for any algebraically closed field coefficients $\bbmk$. %Note that  $\Trop_{\bbmk}(\mathcal{J}^{\leqslant k}(X; \bbmk))$ only depends on the characteristic of $\bbmk$, not on $\bbmk$ itself. 
In general the inclusion 
$$ \bigcup_{\mathrm{char}(\bbmk)=p\geqslant 0}\mathrm{S}(\Trop_{\bbmk}(\mathcal{J}^{\leqslant k}(X; \bbmk)) \subseteq \mathrm{S}\big(\Trop_{\Z}(\mathcal{J}^{\leqslant k}(X; \Z))\big)$$
could be strict, see \cref{example 3}. 
Following  directions pointed out by Bieri and Groves in \cite[Section 8.4]{BG84}, we show that 
the missing ingredient is the tropical variety for the $p$-adic valuation over $\Q$ as in \cref{prop three trop union Z} (see \cref{rem tropical over Z} for more explanations).
%Moreover, following directions pointed out by Bieri and Groves in \cite[Section 8.4]{BG84}, we show that the tropical variety over $\Z$ can be understood by three types of tropical variety over fields with valuation indicated in the notation, see \cref{prop three trop union Z}. %The trivial valuation is understood if the valuation is not mentioned in the notation.

The proof of \cref{thm:compare_two_bnsr} replies on a series of nice results due to Bieri, Groves, and Strebel \cite{BS80,BS81,BG84}. 
They gave a complete description for the Sigma-invariants of finitely generated modules over finitely generated abelian groups. Applying their results and a key theorem due to Papadima and Suciu \cite[Theorem 10.1]{PapaSuciu10}, we obtain \cref{thm:compare_two_bnsr}. Since Bieri, Groves, and Strebel's results are one of the origins of tropical geometry (see \cite{EKL}), one can translate the invariant they studied into the language of tropical geometry, and this is why the tropical variety shows up in \cref{thm:compare_two_bnsr}.

\subsection{Applications}

%As applications of Theorem \ref{thm:compare_two_bnsr}, we study K\"ahler groups. 

It is a question of Serre to characterize finitely presented groups that can serve as the fundamental group of a compact K\"ahler manifold, called the K\"ahler groups. While some obstructions are known mainly due to the Hodge theory, we still do not have a panorama of this class of groups. The readers may refer to the monographs \cite{ABCKT,PyBook} and the survey papers \cite{Arapura,Burger} for this interesting topic.

A relative version of Serre's question would be to describe the intersection of K\"ahler groups with another class of groups. %For example, among finitely generated abelian groups, it is a K\"ahler group if and only if its $\Z$-rank is even. To emphasize the difficulty even among a known family, let's mention that for nilpotent groups, we do not know the classification yet \cite{Campana95}. 
To name a few non-trivial known cases, we have the classification of K\"ahler groups within $3$-dimensional manifold groups in \cite{DS09,Kotschick12,BMS12}; within right-angled Artin groups in \cite{DPS}; within one-relator groups in \cite{BiswasMj}; within groups of large deficiency in \cite{Kotschick}; within cubulable groups up to finite index in \cite{DelzantPy}, etc. Under this spirit, we classify K\"ahler groups among a new class of groups, which is a natural generalization of the right-angled Artin groups. We call them the \textit{weighted right-angled Artin groups}. This class of groups comes from the edge-weighted finite simple graphs. 
 %Finite simple graphs are identified in the sequel with edge-weighted finite simple graphs with $\ell(e)=1$ for each $e \in E.$

\begin{definition}[Weighted right-angled Artin groups]\label{def:weighted RAAG}
Let $\Gamma_\ell= (V, E, \ell)$ be an edge-weighted finite simple graph, with vertex set $V$, edge set
$E$  and an edge weight function $\ell \colon E \to \Z_{>0}$.
The weighted right-angled Artin group associated to  $\Gamma_\ell$ is the group $G_{\Gamma_\ell}$ generated by the vertices $a\in V$, with a defining relation
    $$ [a_i,a_j]^{\ell(e)}=1 $$
for each edge $e = \{a_i,a_j\}$ in $E$ (here $[a_i,a_j]=a_i a_j a_i^{-1}a_j^{-1}$). 
If $\ell(e)=1$ for all $e\in E$, then $G_{\Gamma_\ell}$ is the classical right-angled Artin group, denoted by $G_\Gamma$.
\end{definition}

\begin{remark} The weighted right-angled Artin groups are constructed in a way similar to Artin groups. Moreover, 
    the following Coxeter group  $$ \langle a_i\in V | a_i^2=1, (a_ia_j)^{2\ell(e)}=1 \text{ when there is an edge } e=\{ a_i,  a_j\} \rangle$$ is a quotient of the weighted right-angled Artin group $G_{\Gamma_\ell}$. 
\end{remark}

The various properties of the right-angled Artin group have been thoroughly studied by Papadima and Suciu in \cite{PS06,PS09}. Moreover, 
the K\"ahler right-angled Artin group is classified by Dimca, Papadima, and Suciu as follows (the same result is proved by  Py using different methods in \cite[Corollary 4]{Py13}).

\begin{theorem}[\cite{DPS}, Corollary 11.14] \label{thm RAAG}
Let $\Gamma$ be a finite simple graph and let $G_\Gamma$ denote the corresponding right-angled Artin group. Then the following are equivalent.
\begin{enumerate}[label=(\roman*)] 
    \item The group $G_{\Gamma}$ is K\"ahler.
    \item The graph $\Gamma$ is a complete graph on an even number of vertices. 
    \item The group $G_\Gamma$ is a free abelian group of even rank.
\end{enumerate}  
\end{theorem}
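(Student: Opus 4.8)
The plan is to reduce to the one substantive implication. The equivalence (ii)$\Leftrightarrow$(iii) is immediate: $G_\Gamma$ is abelian precisely when every pair of vertices spans an edge, i.e.\ when $\Gamma$ is complete, in which case $G_{K_{n}}\cong\Z^{n}$ with $n=|V|$; so (ii) and (iii) both say ``$G_\Gamma\cong\Z^{n}$ with $n$ even''. For (iii)$\Rightarrow$(i) one observes $\Z^{2g}=\pi_{1}\bigl((\C/\Lambda)^{g}\bigr)$ with $(\C/\Lambda)^{g}$ compact K\"ahler. Everything therefore comes down to (i)$\Rightarrow$(ii): if $G_\Gamma=\pi_{1}(M)$ for a compact K\"ahler manifold $M$, then $\Gamma$ is complete on an even number of vertices.

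Since $H^{1}(M;\R)\cong\mathrm{Hom}(G_\Gamma;\R)\cong\R^{V}$ and a compact K\"ahler manifold has even first Betti number, $|V|$ is even, and it remains only to show that $\Gamma$ is complete. Assume not, and choose $W\subseteq V$ maximal with the property that the induced subgraph $\Gamma_{W}$ is disconnected (a discrete graph on $\geqslant 2$ vertices counting as disconnected).

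The engine is the first resonance variety $\mathcal R^{1}(G_\Gamma;\C)\subseteq H^{1}(G_\Gamma;\C)$. On the combinatorial side, Papadima and Suciu computed it: it equals $\bigcup_{W}\C^{W}$ over the maximal disconnecting sets $W$, and each $\C^{W}$ is an irreducible component. On the geometric side, $G_\Gamma$ is $1$-formal (Kapovich--Millson), so by the Tangent Cone Theorem $\mathcal R^{1}(G_\Gamma;\C)$ is the tangent cone at the origin of the characteristic variety $\sV^{1}(G_\Gamma;\C)$, whose positive-dimensional components through $1$ are, by Arapura's theorem, pulled back along orbifold pencils $f\colon M\to C$; taking tangent spaces, \emph{every positive-dimensional irreducible component of $\mathcal R^{1}(G_\Gamma;\C)$ has the form $f^{*}H^{1}(\overline C;\C)$} for an epimorphism $f_{*}\colon G_\Gamma\twoheadrightarrow\pi_{1}^{\mathrm{orb}}(C)$ onto the orbifold fundamental group of a curve $\overline C$ of genus $g\geqslant 1$, with $f^{*}$ injective. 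Apply this to our $\C^{W}$ and compare cup products. Combinatorially, $\textstyle\bigwedge^{2}\C^{W}\to H^{2}(G_\Gamma;\C)$ sends $v^{*}\wedge w^{*}$ to the class of the edge $\{v,w\}$ when $v,w\in W$ are adjacent and to $0$ otherwise, so its image has dimension the number of edges of $\Gamma_{W}$; but on $f^{*}H^{1}(\overline C;\C)$ the cup product factors through the one-dimensional space $f^{*}H^{2}(\overline C;\C)$ (the cup form on $H^{1}$ of a positive-genus curve surjects onto $H^{2}$, and the pullback of the point class is the nonzero class of a fibre). Hence $\Gamma_{W}$ has exactly one edge $\{u,u'\}$, the remaining $2g-2$ vertices of $W$ being isolated in $\Gamma_{W}$; if $g=1$ this makes $W=\{u,u'\}$ with $\Gamma_{W}$ a single edge, which is connected, contradicting the choice of $W$, so $g\geqslant 2$.

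Now genus $\geqslant 2$ forces the contradiction. As $u,u'$ are adjacent in $\Gamma$ they commute in $G_\Gamma$, so $f_{*}(u)$ and $f_{*}(u')$ commute in $\pi_{1}^{\mathrm{orb}}(C)$, which for $g\geqslant 2$ is a cocompact Fuchsian group; there a commuting pair lies in a common cyclic subgroup, so the images of $f_{*}(u)$ and $f_{*}(u')$ in $H_{1}(\pi_{1}^{\mathrm{orb}}(C);\C)\cong\C^{2g}$ are proportional. But $u^{*},(u')^{*}\in\C^{W}=f^{*}H^{1}(\overline C;\C)$, say $u^{*}=f^{*}\phi$ and $(u')^{*}=f^{*}\psi$; evaluating coordinate functions gives $\phi(f_{*}u)=1$, $\phi(f_{*}u')=0$, and $\psi(f_{*}u')=1$, and the first two force the image of $f_{*}u'$ in $\C^{2g}$ to vanish, contradicting the third. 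Hence no maximal disconnecting set exists, $\Gamma$ is complete, and together with the parity of $|V|$ this is exactly (ii). The step I would be most careful with --- and which I regard as the real content --- is the passage ``irreducible component of $\mathcal R^{1}(G_\Gamma;\C)$'' $\leadsto$ ``orbifold pencil $f\colon M\to C$ with $f^{*}H^{1}(\overline C)$ equal to that component and $f_{*}$ an epimorphism of fundamental groups'': it combines $1$-formality, the Tangent Cone Theorem, and Arapura's theorem in its orbifold form, and requires a little care with multiple fibres (whence the orbifold fundamental group, though this does not change the relevant $H^{1}$, still that of the genus-$g$ base). A variant closer to the theme of this paper would instead match Delzant's description of $\Sigma^{1}$ of a K\"ahler group (a disjoint union of rationally defined subspheres attached to orbifold pencils) against the Meier--VanWyk combinatorial formula for $\Sigma^{1}(G_\Gamma)$, which are again compatible only when $\Gamma$ is complete.
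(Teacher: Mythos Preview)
This theorem is quoted from \cite{DPS} and not proved directly in the paper, but Step~1 of the proof of Theorem~\ref{thm WRAAG} reproduces the DPS argument in the unweighted case, so that is the natural comparison point.

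Your proof is essentially correct and follows the same overall strategy: pick a maximally disconnected $W$, identify the resonance component $\C^{W}$, realize it via $1$-formality, the Tangent Cone Theorem, and Arapura's theorem as $f^{*}H^{1}(\overline C;\C)$ for an orbifold pencil $f$, and extract a contradiction from cup products. Where you diverge from the paper is in \emph{how} the contradiction is reached. The paper invokes \cite[Theorem~C]{DPS} as a black box --- the component $\C^{W}$ must be $1$-isotropic in $H^{*}(G_{\Gamma})$, i.e.\ the restricted cup form has one-dimensional image \emph{and is non-degenerate} --- and then cites \cite[Lemma~9.4]{DPS} to show this fails whenever $G_{\Gamma_{W}}$ splits as a nontrivial free product. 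You instead work directly with the pencil: the cup product on $\C^{W}=f^{*}H^{1}(\overline C)$ lands in the line $f^{*}H^{2}(\overline C)\subset H^{2}(M)$, so $\Gamma_{W}$ has at least one edge; the case $g=1$ is then immediate, and for $g\geqslant 2$ you finish with the pleasant observation that commuting elements of a cocompact Fuchsian group lie in a common cyclic subgroup. One caution: your one-dimensionality bound lives in $H^{2}(M;\C)$, not $H^{2}(G_{\Gamma};\C)$, and since the comparison map $H^{2}(G_{\Gamma})\to H^{2}(M)$ need not be injective this only yields ``at least one edge'', not ``exactly one'' as you write. But your $g\geqslant 2$ argument needs only the existence of a single adjacent pair $u,u'$ in $W$, so the proof stands unchanged. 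The paper's route is shorter (two citations and done); yours is more self-contained and avoids invoking the full $1$-isotropicity theorem, at the cost of the extra Fuchsian step.
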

We  classify  K\"ahler weighted right-angled Artin group as follows.
\begin{theorem} \label{thm WRAAG}
For a weighted right-angled Artin group $G_{\Gamma_\ell}$, the following are equivalent.
\begin{enumerate}[label=(\roman*)] 
    \item The group $G_{\Gamma_\ell}$ is K\"ahler.
    \item The edge weighted graph $\Gamma_\ell$ is a complete graph on an even number of vertices and no edges with weight $\geqslant 2$ are adjacent. 
    \item The group $G_{\Gamma_\ell}$ is a finite product of groups with type $\langle a_1,a_2 | [a_1,a_2]^m=1 \rangle$ for some positive integer $m$.
\end{enumerate}   
\end{theorem}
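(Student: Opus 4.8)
The plan is to prove the cycle of implications \((\mathrm{ii})\Rightarrow(\mathrm{iii})\Rightarrow(\mathrm{i})\Rightarrow(\mathrm{ii})\), the first two being elementary and the last being the one that forces the use of \cref{thm:compare_two_bnsr}. For \((\mathrm{ii})\Rightarrow(\mathrm{iii})\) I would extend the given matching of edges of weight \(\geqslant2\) to a perfect matching of the (even) vertex set: since \(\Gamma\) is complete, the edge inside each pair has some weight \(\mu_k\geqslant1\) and every edge between two distinct pairs has weight \(1\), so the only non-commuting generators share a pair and \(G_{\Gamma_\ell}\cong\prod_{k}\langle a_k,b_k\mid[a_k,b_k]^{\mu_k}\rangle\). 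For \((\mathrm{iii})\Rightarrow(\mathrm{i})\), since products of compact Kähler manifolds are Kähler it suffices to realize \(\langle a,b\mid[a,b]^m\rangle\) as a Kähler group: for \(m=1\) this is \(\Z^2=\pi_1\) of an elliptic curve, and for \(m\geqslant2\) it is \(\pi_1^{\mathrm{orb}}(\mathcal O)\) for \(\mathcal O\) a compact genus-one Riemann orbifold with one cone point of order \(m\), which is good and satisfies \(\chi^{\mathrm{orb}}(\mathcal O)<0\); I would take a finite regular orbifold covering \(C\to\mathcal O\) by a smooth curve of genus \(\geqslant2\) with finite deck group \(F\) (Selberg), a smooth projective variety \(Z\) with \(\pi_1(Z)=F\) (Serre), and its universal cover \(\widetilde Z\) (a finite étale cover of \(Z\), hence a simply connected smooth projective variety carrying a free \(F\)-action), and set \(M=(C\times\widetilde Z)/F\); then \(M\) is a smooth projective variety and, because \(\pi_1(C\times\widetilde Z)=\pi_1(C)\) has trivial center, \(\pi_1(M)\cong\pi_1^{\mathrm{orb}}(\mathcal O)\).

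For \((\mathrm{i})\Rightarrow(\mathrm{ii})\), suppose \(G=G_{\Gamma_\ell}\) is Kähler. Then \(|V|=b_1(G)\) is even. The first observation is that over \(\Q\) the relator \([a_i,a_j]^{\ell(e)}\) has the same linearization as \([a_i,a_j]\) — at a rank-one character trivial on commutators its Fox derivatives equal \(\ell(e)\) times those of \([a_i,a_j]\), and in \(H^{\leqslant 2}(-;\Q)\) it imposes the same vanishing of cup products — so \(G\) is \(1\)-formal and shares its \(\Q\)-cohomology ring, its Malcev Lie algebra and its resonance variety \(\mathcal R^1(-;\C)\) with the ordinary right-angled Artin group \(G_\Gamma\). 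The Dimca–Papadima–Suciu–Arapura restrictions on \(\mathcal R^1\) of a \(1\)-formal Kähler group (finitely many components, each a rational linear subspace, pairwise meeting only at \(0\), even-dimensional and isotropic) then rule out a non-complete \(\Gamma\) exactly as in the proof of \cref{thm RAAG}; so \(\Gamma\) is complete and the Malcev Lie algebra of \(G\) is the abelian one of rank \(2n\).

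It remains to show that no two edges of weight \(\geqslant2\) are adjacent. Suppose \(\{v,u_1\}\) and \(\{v,u_2\}\) have weights \(m_1,m_2\geqslant2\). Collapsing every vertex other than \(v,u_1,u_2\) gives a surjection \(q\colon G\twoheadrightarrow G_0=\langle v,u_1,u_2\mid[v,u_1]^{m_1},[v,u_2]^{m_2},[u_1,u_2]^{m_3}\rangle\), and since \(H_1(G;\bbmk_{\rho\circ q})\twoheadrightarrow H_1(G_0;\bbmk_\rho)\) for every rank-one character \(\rho\), we get \(q^{*}\sV^{\leqslant1}(G_0;\overline{\Fp})\subseteq\sV^{\leqslant1}(G;\overline{\Fp})\). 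A direct Fox-calculus computation of the Alexander matrix of the one-relator-type group \(G_0\) shows that for any prime \(p\mid m_1\) the column of \([v,u_1]^{m_1}\) vanishes modulo \(p\), so the matrix has rank \(\leqslant1\) along the subtorus \(\{\rho_{u_2}=1\}\), which therefore lies in \(\sV^1(G_0;\overline{\Fp})\); tropicalizing the pullback \(q^{*}\{\rho_{u_2}=1\}\) gives \(\mathrm S(\R\langle e_v,e_{u_1}\rangle)\subseteq\mathrm S(\Trop_{\Fp}(\cJ^{\leqslant1}(G;\Fp)))\) (here \(\R\langle e_v,e_{u_1}\rangle\) is the coordinate \(2\)-plane in \(\Hom(H;\R)\) supported on \(v,u_1\)), and symmetrically \(\mathrm S(\R\langle e_v,e_{u_2}\rangle)\) lies in the corresponding tropical locus for a prime dividing \(m_2\). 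By \cref{thm:compare_two_bnsr} both of these great circles lie in \(\mathrm S(\Trop_\Z(\cJ^{\leqslant1}(G;\Z)))\subseteq\Sigma^1(G;\Z)^c\). On the other hand, for a Kähler group Delzant's theorem identifies \(\Sigma^1(G;\Z)^c\) with \(\mathrm S(\mathcal R^1(G;\R))=\bigcup_i\mathrm S(L_i)\), where the \(L_i\) are the pairwise-disjoint even-dimensional isotropic subspaces cut out by orbifold pencils \(p_i\colon X\to C_i\). Connectedness forces each of the two great circles into a single \(\mathrm S(L_i)\); since \(e_v\) would then lie in \(L_{i_1}\cap L_{i_2}\), disjointness gives \(i_1=i_2\), so \(L_{i_1}\supseteq\R\langle e_v,e_{u_1},e_{u_2}\rangle\) has dimension \(\geqslant 3\), hence \(\geqslant 4\); thus \(C_{i_1}\) has genus \(\geqslant2\) and \(p_i\) furnishes a surjection \(\phi\colon G\twoheadrightarrow\pi_1^{\mathrm{orb}}(C_{i_1})\). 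But \(\phi\) induces a surjection of Malcev Lie algebras, and that of \(\pi_1^{\mathrm{orb}}(C_{i_1})\) is non-abelian when the genus is \(\geqslant2\), whereas that of \(G\) is abelian — a contradiction. Hence the edges of weight \(\geqslant2\) form a matching, and \((\mathrm{ii})\) holds.

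The crux — and the hardest step — is the passage in the last paragraph from the mod-\(p\)/tropical lower bound to the statement that two great circles sit inside a \emph{single} pencil subspace \(L_i\): this is exactly where the Kähler hypothesis enters, via Delzant's description of \(\Sigma^1\) together with Arapura's structure of \(\mathcal R^1\) (and its crucial pairwise disjointness of components); without these, two \(2\)-planes sharing a line is harmless. Two subsidiary points also need attention: the Fox-calculus bound must be set up to cover the cases in which \(\{u_1,u_2\}\) itself has weight \(\geqslant2\) and in which \(m_1,m_2\) share a prime \(p\) — in the latter case one directly obtains \(\R\langle e_v,e_{u_1},e_{u_2}\rangle\) inside \(\Trop_{\Fp}(\cJ^{\leqslant1}(G;\Fp))\) — and one must verify that \(G_{\Gamma_\ell}\) is indeed \(1\)-formal with the \(\Q\)-cohomology ring and resonance variety of \(G_\Gamma\), so that the completeness step genuinely reduces to the argument behind \cref{thm RAAG}. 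The closing Malcev-Lie-algebra comparison is then a clean obstruction.
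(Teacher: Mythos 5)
Your proposal is essentially correct and follows the same broad strategy as the paper — easy implications \((\mathrm{ii})\Rightarrow(\mathrm{iii})\Rightarrow(\mathrm{i})\), then a two-step analysis of \((\mathrm{i})\Rightarrow(\mathrm{ii})\): completeness via the structure of degree-one jump/resonance loci, and non-adjacency via a mod-\(p\) tropical lower bound confronted with the pencil structure given by \cref{thm Delzant} and \cref{prop:abelian_bnsr_K\"ahler}. The differences are largely stylistic, but two deserve comment. First, for the completeness step you argue through the resonance variety \(\mathcal R^1\) and \(1\)-formality. The claim that \(G_{\Gamma_\ell}\) ``shares its \(\Q\)-cohomology ring'' with \(G_\Gamma\) is stronger than you need and is not really established; the clean route is to note that \(1\)-formality of \(G_{\Gamma_\ell}\) follows from the K\"ahler hypothesis (DGMS), and then \(\mathcal R^1(G_{\Gamma_\ell};\C)\) equals the tangent cone at \(1\) of \(\sV^1(G_{\Gamma_\ell};\C)\), which by \cref{cor disconnected} coincides with that of \(G_\Gamma\). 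The paper avoids \(\mathcal R^1\) entirely and works with \(\sV^1\) and the DPS isotropicity theorem (\cite[Theorem~C]{DPS}) directly, which is a little tighter. Second, your endgame differs: rather than first ruling out genus \(\geqslant 2\) orbifold pencils (as the paper does, from \(\sV^1(G;\C)=\{1\}\) together with \cref{prop compact orbifold group}) and then invoking the Hodge-theoretic dichotomy to show that the resulting \(2\)-planes are pairwise either equal or intersect trivially, you derive the existence of a pencil subspace of dimension \(\geqslant 4\) and then contradict it by surjecting onto a genus \(\geqslant 2\) orbifold group and comparing Malcev Lie algebras. This works, but note that it still relies on the pairwise-trivial-intersection property of the \(L_i\) (your ``pairwise disjointness''), so you are not really dispensing with the Hodge argument — you are using it \emph{and} then adding a Malcev-theoretic step. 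The paper's version is shorter and self-contained. Finally, for \((\mathrm{iii})\Rightarrow(\mathrm{i})\) you give an explicit Kähler model of \(\langle a,b\mid[a,b]^m\rangle\); the paper simply cites \cite{Ue}, and your construction is a fine alternative.
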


\begin{remark}\label{rmk: delzant's cubulable}
Professor Delzant kindly point out to us that the weighted right-angled Artin group $G_{\Gamma_\ell}$ is cubulable if  the weights $\ell(e)\geqslant 2$ for all edges $e\in E$. In this case, our result is compatible (up to finite index) with his work with Py in \cite{DelzantPy}.
\end{remark}

Dimca, Papadima and Suciu indeed classified quasi-K\"ahler right-angled Artin group in \cite[Theorem 11.7]{DPS}, which leads to the following question.
\begin{question} Can one classify the quasi-K\"ahler weighted right-angled Artin group?
\end{question}

In general, for $G$ a  K\"ahler group, Delzant gave a complete description of $\Sigma^1(G;\Z)$ in \cite{Delzant10}, and Suciu further reinterpreted Delzant's results using the tropical variety of homology jump loci in  \cite[Theorem 12.2]{Suciu21} (see also \cite[Theorem 16.4]{PapaSuciu10}). 
As a continuation of these results, we prove that the first inclusion in \cref{main inclusion} holds as equality for K\"ahler groups in degree $1$. 
Then we derived that the BNS invariant of a K\"ahler group is the same as that of its maximal metabelianization, i.e.
 $$\Sigma^1(G; \Z)=\Sigma^1(G/G''; \Z), $$
 where $G'=[G,G]$, and $G''=[G',G']$. For more details, see \cref{cor:sigma_metabelian_quotient_K\"ahler}. 
 This certainly puts some restrictions on the K\"ahler groups. Furthermore, we summarize some properties for the K\"ahler group in the next proposition. 
Most properties listed here should be already known to the experts. For example, $(viii)\iff (ix)$ follows from Papadima and Suciu's work \cite[Theorem 3.6]{PapaSuciu10}; $(iv)\Rightarrow (vii)$ is proved by Beauville in \cite{Beauville} (see also \cite[Lemme 3.1]{Delzant10} or \cite[Corollary 3.6]{Burger}). (viii) is also related to Arapura's work  \cite[Property $(\mathrm{K}^{-})$]{Arapura}. 

\begin{proposition}\label{prop K\"ahler}
Let $G$ be a K\"ahler group. %Set $G'=[G,G]$, and $G''=[G',G']$. 
Then the following are equivalent.
\begin{enumerate}[label=(\roman*)] 
    \item $\Sigma^1(G; \Z)=\mathrm{S}(G)$. 
    \item $G'$ is finitely generated.
    \item  $\Sigma^1(G/G''; \Z)=\mathrm{S}(G/G'')$.
    \item $G'/G''$ is finitely generated.
    \item $G/G''$ is polycyclic.
    \item $G/G''$ is finitely presented.
    \item $G/G''$ is virtually nilpotent.
    \item $G'/G''\otimes_{\Z} \bbmk$ is of finite dimensional $\bbmk$- for any field coefficients $\bbmk$.
    \item $\mathcal{V}^1(G; \bbmk)$ consists of only finitely many points  for any algebraically closed field coefficients $\bbmk$.
\end{enumerate}    
\end{proposition}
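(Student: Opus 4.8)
The plan is to prove the nine conditions equivalent by running a single cycle of implications, with three sources of input: (a)~the Bieri--Neumann--Strebel criterion and the Bieri--Groves--Strebel structure theory of metabelian groups; (b)~the K\"ahler-specific facts already available above, namely \cref{cor:sigma_metabelian_quotient_K\"ahler} (that $\Sigma^1(G;\Z)=\Sigma^1(G/G'';\Z)$), the equality form of \cref{main inclusion} for K\"ahler groups, Delzant's description of $\Sigma^1$ of a K\"ahler group, and Beauville's theorem \cite{Beauville}; and (c)~the structure theorem for characteristic varieties of compact K\"ahler manifolds (Arapura, Beauville, Simpson), which in particular forces every isolated point of $\mathcal{V}^1(G;\C)$ to be a torsion character.

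I would first handle the ``group-theoretic block'' $\{(i),\dots,(vii)\}$. By the Bieri--Neumann--Strebel criterion \cite{BNS}, a finitely generated group $H$ satisfies $\Sigma^1(H;\Z)=\mathrm{S}(H)$ if and only if $[H,H]$ is finitely generated; taking $H=G$ gives $(i)\Leftrightarrow(ii)$, taking $H=G/G''$ gives $(iii)\Leftrightarrow(iv)$ (note $\mathrm{S}(G)=\mathrm{S}(G/G'')$, since the two groups have the same abelianization), and \cref{cor:sigma_metabelian_quotient_K\"ahler} gives $(i)\Leftrightarrow(iii)$. For the metabelian statements: $(iv)\Rightarrow(v)$ because a finitely generated metabelian group with finitely generated commutator subgroup is (finitely generated abelian)-by-(finitely generated abelian), hence polycyclic; $(v)\Rightarrow(vi)$ because polycyclic groups are finitely presented; $(iv)\Rightarrow(vii)$ is Beauville's theorem \cite{Beauville}; and $(vii)\Rightarrow(v)$, as well as $(v)\Rightarrow(iv)$, because finitely generated virtually nilpotent groups and subgroups of polycyclic groups are polycyclic. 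It remains to close the block with $(vi)\Rightarrow(iii)$: by Bieri--Strebel, the complement $\Sigma^1(Q;\Z)^c$ of a finitely presented metabelian group $Q$ contains no pair of antipodal characters; but applying Delzant's theorem to the K\"ahler group $G$ (equivalently, the equality form of \cref{main inclusion} together with the structure theorem, which identifies $\Sigma^1(G;\Z)^c=\mathrm{S}(\Trop_\Z(\mathcal{J}^{\leqslant 1}(X;\Z)))$ with a finite union of rationally defined linear subspheres) shows that $\Sigma^1(G;\Z)^c$ is antipodally symmetric; since $\Sigma^1(G/G'';\Z)^c=\Sigma^1(G;\Z)^c$ is symmetric and contains no antipodal pair, it must be empty, which is $(iii)$.

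Next I would splice in $(viii)$ and $(ix)$. The implication $(iv)\Rightarrow(viii)$ is immediate, and $(viii)\Leftrightarrow(ix)$ is \cite[Theorem 3.6]{PapaSuciu10}, applied to the finitely generated $\Z H$-module $G'/G''$ whose support over an algebraically closed field $\bbmk$ is $\mathcal{V}^1(G;\bbmk)$ away from the trivial character. To close the cycle I would prove $(ix)\Rightarrow(i)$ as follows. If $\mathcal{V}^1(G;\bbmk)$ is finite for every algebraically closed $\bbmk$, then each $\Trop_\bbmk(\mathcal{J}^{\leqslant 1}(X;\bbmk))$ equals $\{0\}$, being the tropicalization of a zero-dimensional variety for the trivial valuation; moreover, by the structure theorem, $\mathcal{V}^1(G;\C)$ consists of torsion characters, so the zero locus of $\mathcal{J}^{\leqslant 1}(X;\Q)$ consists of points all of whose coordinates are roots of unity, whence $\Trop_{v_p}(\mathcal{J}^{\leqslant 1}(X;\Q))=\{0\}$ for every prime $p$ as well. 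By \cref{prop three trop union Z} these two facts give $\Trop_\Z(\mathcal{J}^{\leqslant 1}(X;\Z))=\{0\}$, and then the equality form of \cref{main inclusion} for K\"ahler groups yields $\Sigma^1(G;\Z)=\mathrm{S}(\Trop_\Z(\mathcal{J}^{\leqslant 1}(X;\Z)))^c=\mathrm{S}(G)$, i.e. $(i)$.

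The main obstacle is to make the two K\"ahler inputs precise: that Delzant's description really gives an antipodally symmetric complement with no isolated-point contribution (used in $(vi)\Rightarrow(iii)$), and that finiteness of $\mathcal{V}^1(G;\C)$ together with its torsion property is genuinely enough to kill all the $p$-adic tropicalizations (used in $(ix)\Rightarrow(i)$). Finiteness of $\mathcal{V}^1$ alone is not sufficient here: the Baumslag--Solitar group $\mathrm{BS}(1,p)$ with $p\geqslant 2$ has finite $\mathcal{V}^1(-;\C)$, but this set contains the non-torsion character sending $t$ to $p$, and correspondingly its BNS invariant $\Sigma^1(\mathrm{BS}(1,p);\Z)$ is not the whole sphere --- which is exactly why such groups fail to be K\"ahler. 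Once these two K\"ahler inputs are in place, the remaining implications are bookkeeping with the cited results.
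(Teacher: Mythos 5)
Your proposal is correct and, for most of the cycle, runs along the same lines as the paper's proof: the block $(i)$--$(vii)$ is handled by the same ingredients (BNS criterion, \cref{cor:sigma_metabelian_quotient_K\"ahler}, Beauville, polycyclic $\Rightarrow$ finitely presented, and the Bieri--Strebel $\Sigma\cup-\Sigma=\mathrm{S}(Q)$ criterion combined with the $\pm$-symmetry of $\Sigma^1$ from Delzant's theorem), and $(iv)\Rightarrow(viii)\Leftrightarrow(ix)$ are standard. You do wire the arrows slightly differently --- you argue $(v)\Rightarrow(iv)$ directly (subgroups of polycyclic groups are polycyclic) and $(vii)\Rightarrow(v)$, whereas the paper routes everything back through $(vi)\Rightarrow(iii)$; both are fine, though your blanket statement that ``finitely generated virtually nilpotent groups are polycyclic'' is only true here because $Q=G/G''$ is solvable, a caveat worth stating. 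The genuine divergence is in closing the cycle from $(ix)$. The paper applies \cref{prop:abelian_bnsr_K\"ahler} directly: for K\"ahler groups $\mathrm{S}(\Trop_\Z(\cJ^1))$ already coincides on the sphere with $\mathrm{S}(\bigcup_\bbmk\Trop_\bbmk(\sV^1))$, so finiteness of every $\sV^1(G;\bbmk)$ (hence $\Trop_\bbmk\subseteq\{0\}$, by \cref{thm structure trop}) immediately kills $\mathrm{S}(\Trop_\Z)$ and gives $(iii)$, with no need to look at $p$-adic valuations at all. You instead re-derive the same conclusion from scratch via \cref{prop three trop union Z}, invoking the structure theorem for characteristic varieties of compact K\"ahler manifolds to argue that the finitely many points of $\sV^1(G;\C)$ are torsion characters and therefore kill the $p$-adic tropicalizations. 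Your route is valid and arguably more illuminating about \emph{why} the $p$-adic contributions vanish, but it imports an extra piece of machinery (the torsion-translate structure theorem) that the paper deliberately avoids by having already packaged the needed K\"ahler input into \cref{prop:abelian_bnsr_K\"ahler}; the BS$(1,p)$ remark you make is exactly the right sanity check that this extra input is really being used.
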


%\begin{remark}
 %Recently, Rogov extended Delzant's work    \cite{Delzant10} from K\"ahler group to the quasi-projective group, i.e., the fundamental group of a complex smooth quasi-projective variety. This suggest that one may generalize \cref{prop K\"ahler} to the quasi-projective group.
%\end{remark}

In addition to investigating K\"ahler groups, we apply \cref{thm:compare_two_bnsr} to the Dwyer-Fried set. In \cite{DwyerFried}, Dwyer and Fried studied when a regular free abelian covering of a finite CW complex admits finite Betti numbers. Their findings were further developed in  \cite{PapaSuciu10,Suciu14abelian_cover,SuciuYZ} with field coefficients. By employing the tropical variety over $\Z$, we extend some of these results to the setting of integral coefficients.

\subsection{Organization}
This paper is organized as follows. In \cref{sec:background}, we recall  Bieri, Groves and Strebel's work. 
In \cref{sec:tropical}, we translate their results into the language of tropical geometry. 
In \cref{sec: proof}, we recall the definitions and properties of the BNSR invariants and jump ideal and give the proof of \cref{thm:compare_two_bnsr}.  In \cref{section examples}, we compute some examples and study the Dwyer-Fried set with $\Z$-coefficients. The last \cref{sec:applications} is devoted to applications on K\"ahler groups. We prove \cref{prop K\"ahler} in \cref{subsec_K\"ahler} and \cref{thm WRAAG} in \cref{subsec WRAAG}.

\section{Bieri, Groves and Strebel's results} \label{sec:background}
In this section, we always assume that $H$ is a finitely generated \textit{abelian} group with $\mathrm{rank}_{\Z} H=n\geqslant 1$. Then $\mathrm{Hom}(H ;\R)\cong \R^n$, and the character sphere 
$$\mathrm{S}(H)=(\mathrm{Hom}(H ;\R)-\{0\})\slash \R^+$$
is topologically an $(n-1)$-dimensional sphere. Here $\R^+$, the set of positive real numbers, acts on $\mathrm{Hom}(H ;\R)-\{0\}$ by scalar multiplication. 
We will abuse the notation $\chi$ for both a nonzero character and its equivalent class $[\chi]$ in $\mathrm{S}(H)$. For any subset $Z\subseteq \mathrm{Hom}(H ;\R)$, denote the image of $Z-\{0\}$ in $\mathrm{S}(H)$ by $\mathrm{S}(Z)$.

Let $R$ be a commutative Noetherian ring with unity. Then the group ring $RH$  is also commutative and Noetherian. Given any nonzero $\chi\in \mathrm{Hom}(H ;\R)$, denote 
$$H_{\chi}=\{h\in H\mid \chi(h)\geqslant 0\}$$
the associated submonoid. Then $RH_\chi$ is a subring of $RH$, hence any $RH$-module can be  viewed as a $RH_\chi$-module.

Following Bieri, Groves and Strebel, for a finitely generated $RH$-module $M$, one can attach the Sigma-invariant $\Sigma(M) \subseteq \mathrm{S}(H)$ defined as
$$\Sigma(M)\coloneqq \{ \chi \in \mathrm{S}(H)\mid M \text{ is finitely generated over }RH_{\chi}\}$$
and $\Sigma^c(M)$ as its complementary in $\mathrm{S}(H)$. %We call any element in $E(M)$ exceptional. 
The set $\Sigma(M)$ plays an important role in answering many algebraic questions, see \cite{BS80,BS81,BG84} for more details. 

Set 
\begin{equation} \label{eq multiply subset}
    \mathscr{S}_\chi \coloneqq \{1+\sum_{h\in H} a_h \cdot h \mid  \text{it is a finite sum with } a_h\in R \text{ and } \chi(h)> 0  \},
\end{equation}
which is a multiplicative subset of $RH$.
Bieri and Strebel gave a complete description of $\Sigma^c(M)$ as follows.

\begin{theorem}[\cite{BS80}, Proposition 2.1]\label{thm BS1}  Let $R$ be a commutative Noetherian ring with unity and $H$ a finitely generated abelian group with $\rank_{\Z} H\geqslant 1$. Assume that $M$ is a  finitely generated $R H$-module with its annihilator ideal denoted by $\mathrm{Ann}(M)$. Then we have  $$ \Sigma^c(M)=\{\chi \in \mathrm{S}(H) \mid \Ann (M) \cap \mathscr{S}_{\chi} =\emptyset\}.$$
\end{theorem}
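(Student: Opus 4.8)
The plan is to prove the equivalent statement that $\chi\in\Sigma(M)$ if and only if $\Ann(M)\cap\mathscr{S}_\chi\neq\emptyset$, establishing the two implications separately; neither is deep, but each turns on a small trick. For $\Ann(M)\cap\mathscr{S}_\chi\neq\emptyset\Rightarrow\chi\in\Sigma(M)$, pick $s=1+\sum_h a_h h\in\Ann(M)$ with $\chi(h)>0$ whenever $a_h\neq0$. If the sum is empty then $M=0$ and there is nothing to prove; otherwise set $t=-\sum_h a_h h$ and $\delta=\min\{\chi(h):a_h\neq0\}>0$. From $(1-t)M=0$ we get $m=tm$, hence $m=t^k m$ for all $m\in M$ and all $k\geq1$, and every monomial of $t^k$ has $\chi$-value $\geq k\delta$. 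Fix $RH$-generators $m_1,\dots,m_r$ of $M$. Since the $R$-span of $\{g\,m_j : g\in H,\ 1\leq j\leq r\}$ is all of $M$, it suffices to place each $g\,m_j$ inside $M':=\sum_i RH_\chi m_i$. Choosing $k$ with $k\delta\geq-\chi(g)$ gives $g\,m_j=g\,t^k m_j=\sum_h c_h\,(gh)\,m_j$ with $c_h\in R$ and $\chi(gh)=\chi(g)+\chi(h)\geq0$, so each $gh\in H_\chi$ and $g\,m_j\in RH_\chi m_j\subseteq M'$. Hence $M=M'$ is finitely generated over $RH_\chi$.

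For the converse $\chi\in\Sigma(M)\Rightarrow\Ann(M)\cap\mathscr{S}_\chi\neq\emptyset$, let $n_1,\dots,n_s$ generate $M$ over $RH_\chi$. Since $\chi\neq0$ there is $h_0\in H$ with $\chi(h_0)>0$; then $h_0^{-1}n_i\in M$, so $h_0^{-1}n_i=\sum_j p_{ij}n_j$ for some $p_{ij}\in RH_\chi$, and multiplying by $h_0$ yields $n_i=\sum_j q_{ij}n_j$ where $q_{ij}=h_0 p_{ij}$ has all monomials of $\chi$-value $\geq\chi(h_0)>0$. Viewing these relations as the matrix identity $(I-Q)\mathbf{n}=0$ over the commutative ring $RH$, with $Q=(q_{ij})$ and $\mathbf{n}=(n_1,\dots,n_s)^{\mathrm T}$, and multiplying by the adjugate of $I-Q$, we get $\det(I-Q)\cdot n_j=0$ for all $j$, hence $\det(I-Q)\cdot M=0$, i.e.\ $\det(I-Q)\in\Ann(M)$. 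In the permutation expansion of $\det(I-Q)$, the identity permutation contributes the constant term $1$ of $\prod_i(1-q_{ii})$, while every remaining monomial is a product involving at least one factor from some $-q_{ij}$ and therefore has strictly positive $\chi$-value. Thus $\det(I-Q)=1+\sum_{h:\chi(h)>0}b_h h\in\mathscr{S}_\chi$, so $\Ann(M)\cap\mathscr{S}_\chi\neq\emptyset$.

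I do not expect a genuine obstacle: the entire content is that positive-$\chi$ monomials get absorbed by iterating $t=1-s$, and conversely that any relation forcing $M$ to be small over $RH_\chi$ produces, through the adjugate, an element of $\mathscr{S}_\chi$ killing $M$. The one point deserving explicit care is the bookkeeping in the converse --- checking that $\det(I-Q)$ really lies in $\mathscr{S}_\chi$, i.e.\ that its constant term is exactly $1$ and every other term has strictly positive $\chi$-value. In the first direction the iteration count $k$ depends on $g$, but this is harmless since membership in $M'$ is verified one element at a time, with no uniform bound required; beyond commutativity of $RH$ the argument uses only that $M$ is finitely generated over $RH$ and that $\chi\neq0$.
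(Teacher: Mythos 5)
Your proof is correct. The paper cites \cite{BS80} for this statement rather than reproducing its proof, but your argument is the standard one and is complete: the forward direction absorbs arbitrary group elements into $RH_\chi$ by iterating $m = t^k m$ with $t = 1 - s$, and the converse uses the adjugate (Cayley--Hamilton) trick on a matrix $Q$ all of whose entries have strictly positive $\chi$-degree, produced by first dividing by an auxiliary $h_0$ with $\chi(h_0) > 0$. All the bookkeeping checks out: in the forward direction, every monomial of $t^k$ has $\chi$-value at least $k\delta$, so $g t^k \in RH_\chi$ once $k\delta \geqslant -\chi(g)$, and the $k$ depending on $g$ is harmless since $RH$-generation of $M$ by the $m_i$ means the $R$-span of $\{g m_j\}$ is all of $M$; in the converse, the permutation expansion of $\det(I-Q)$ does give $1$ plus monomials each containing a factor from some $q_{ij}$, hence of strictly positive $\chi$-value, placing the determinant in $\mathscr{S}_\chi \cap \Ann(M)$. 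One incidental observation worth noting: your proof does not actually invoke Noetherianity of $R$ anywhere --- only that $M$ is finitely generated over $RH$ and over $RH_\chi$ --- so the hypothesis that $R$ is Noetherian, carried in the statement, is not needed for this particular equivalence.
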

\begin{remark}
   Bieri and Strebel  proved the above theorem for $R=\Z$, but the given proof remains valid if $\Z$ is generalized to $R$, see \cite[section 1.2]{BS81}. The precise statement as in the above theorem also appeared in the proof of \cite[Theorem 8.1]{BG84}.
\end{remark}
As an application, Bieri and Strebel gave the following computational results.
\begin{theorem}[\cite{BS81}, Theorem 1.1]\label{BS 2}
With the same notations and assumptions as in \cref{thm BS1}, we further assume $\sqrt{\Ann (M)}=\bigcap\limits_{j=1}^q \IP_j$, where $\{\IP_j\}_{j=1}^q$ are all minimal prime ideals containing ${\Ann(M)}$.
Then we have $$\Sigma^c(M) = \Sigma^c(RH/\Ann(M))=\bigcup_{j=1}^q \Sigma^c(RH/\IP _j)  .$$  
In particular, $\Sigma^c(M)$ only depends on the radical ideal $\sqrt{\Ann (M)}$. 
\end{theorem}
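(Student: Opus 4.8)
The plan is to derive everything from \cref{thm BS1} together with two elementary observations: for any ideal $I \subseteq RH$ the annihilator of the cyclic module $RH/I$ is exactly $I$ (indeed $r\cdot(RH/I)=0$ iff $r\cdot 1\in I$ iff $r\in I$), and each set $\mathscr{S}_\chi$ is multiplicative, hence closed under finite products and, in particular, under taking powers. First I would dispose of the left-hand equality. Since $\Ann(RH/\Ann(M))=\Ann(M)$, \cref{thm BS1} applied to the finitely generated modules $M$ and $RH/\Ann(M)$ shows that both $\Sigma^c(M)$ and $\Sigma^c(RH/\Ann(M))$ equal $\{\chi\in\mathrm{S}(H)\mid \Ann(M)\cap\mathscr{S}_\chi=\emptyset\}$. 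Writing $I=\Ann(M)$ and recalling that $RH$ is Noetherian, so that $\sqrt{I}=\bigcap_{j=1}^{q}\IP_j$ with $q<\infty$, and that $\Ann(RH/\IP_j)=\IP_j$, \cref{thm BS1} reduces the remaining identity $\Sigma^c(RH/\Ann(M))=\bigcup_{j=1}^{q}\Sigma^c(RH/\IP_j)$ to the purely ring-theoretic equivalence
$$ I\cap\mathscr{S}_\chi=\emptyset \iff \exists\, j\in\{1,\dots,q\}\ \text{with}\ \IP_j\cap\mathscr{S}_\chi=\emptyset, $$
for every $\chi\in\mathrm{S}(H)$.

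Next I would prove this equivalence. For ``$\Leftarrow$'', if $\IP_j\cap\mathscr{S}_\chi=\emptyset$ for some $j$, then $I\subseteq\sqrt{I}\subseteq\IP_j$ forces $I\cap\mathscr{S}_\chi\subseteq\IP_j\cap\mathscr{S}_\chi=\emptyset$. For ``$\Rightarrow$'' I argue by contraposition: assume $\IP_j\cap\mathscr{S}_\chi\neq\emptyset$ for every $j$, choose $s_j\in\IP_j\cap\mathscr{S}_\chi$, and set $s=\prod_{j=1}^{q}s_j$ (the empty product being $1$ when $q=0$). Each $\IP_j$ is an ideal, so $s\in\IP_j$ for all $j$, whence $s\in\bigcap_j\IP_j=\sqrt{I}$; and $s\in\mathscr{S}_\chi$ because $\mathscr{S}_\chi$ is multiplicative. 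By the definition of the radical there is $N\geqslant 1$ with $s^N\in I$, while $s^N\in\mathscr{S}_\chi$ again by multiplicativity, so $I\cap\mathscr{S}_\chi\neq\emptyset$. This establishes the displayed equivalence and hence the chain of equalities $\Sigma^c(M)=\Sigma^c(RH/\Ann(M))=\bigcup_{j=1}^q\Sigma^c(RH/\IP_j)$.

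Finally, the closing assertion that $\Sigma^c(M)$ depends only on $\sqrt{\Ann(M)}$ is immediate from this: the set $\{\IP_1,\dots,\IP_q\}$ of minimal primes over $\Ann(M)$ is determined by $\sqrt{\Ann(M)}$, hence so is the union $\bigcup_j\Sigma^c(RH/\IP_j)$. There is no genuine obstacle in this argument—its entire content is already packaged in \cref{thm BS1}, and the rest is bookkeeping with $\mathscr{S}_\chi$. The only points deserving a line of justification are the multiplicativity (and hence power-closedness) of $\mathscr{S}_\chi$, which ultimately rests on $\chi(hh')=\chi(h)+\chi(h')$ and the fact that sums of positive reals are positive, and the finiteness of $q$, which is exactly where Noetherianity of $RH$ enters.
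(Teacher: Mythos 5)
Your argument is correct. Note that the paper does not actually supply a proof of this statement---it is cited directly to Bieri--Strebel \cite{BS81}---so there is no in-paper proof to compare against; nonetheless your derivation from \cref{thm BS1} is valid and is the natural route. The key computations all check out: $\Ann(RH/I)=I$ for a cyclic module over a commutative ring, so the first equality is immediate from \cref{thm BS1}; the multiplicativity of $\mathscr{S}_\chi$ (which the paper itself records just after \cref{eq multiply subset}) makes $\prod_j s_j$ and its powers lie in $\mathscr{S}_\chi$; and Noetherianity of $RH$ (inherited from $R$ Noetherian and $H$ finitely generated) guarantees finitely many minimal primes. The contrapositive in the ``$\Rightarrow$'' direction is exactly the right way to use the radical, and the ``in particular'' clause then follows formally since the set of minimal primes over $\Ann(M)$ is determined by $\sqrt{\Ann(M)}$. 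The only minor stylistic point is the aside about $q=0$: that case corresponds to $\Ann(M)=RH$, i.e.\ $M=0$, which is degenerate and is tacitly excluded in the paper's setting (where one implicitly works with $M\neq 0$ so that $\Sigma^c(M)$ is meaningful); you need not account for it, though doing so causes no harm.
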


Now the computation of $\Sigma^c(M) $ is reduced to the case when $M=R H/ I$ with $I \subsetneq RH $ a proper ideal. 
Bieri, Groves and Strebel further reinterpreted $\Sigma^c(R H/ I) $ by valuations. To explain their results, we recall the definition of valuations on rings.
\begin{definition}[\cite{Bourbaki1998commutative}, Chapter 4]\label{def valuation}
    For a commutative ring $A$ with unity,  a ring valuation $v$ on $A$ is a map $v\colon A\to\R_{\infty}\coloneqq\R\cup \{\infty\}$ such that for any $a,b \in A$ we have that
\begin{enumerate}[label=(\roman*)] 
\item $v(ab)=v(a)+v(b)$,
\item $v(a+b)\geqslant \min\{v(a), v(b)\}$,
\item $v(0)=\infty$ and $v(1)=0$.
\end{enumerate} There may be nonzero elements in $v^{-1}(\infty)$ and it is easy to see that $v^{-1}(\infty)$ is a prime ideal of $A$. When $A$ is a field, this is the classical definition of the valuation on a field.
\end{definition}

We summarize Bieri, Groves and Strebel's results \cite[Theorem 2.1]{BS81} and \cite[Theorem 8.1]{BG84} as follows.

\begin{theorem}\label{thm:BG_main_theorem}
Let $R$ be a commutative Noetherian ring with unity and $H$ a finitely generated abelian group with $\rank_{\Z} H\geqslant 1$.  For a valuation $v$ on $R$ and an ideal $I\subsetneq RH $,  let $\Delta^v_I(H)$ denote the set of all real characters of $H$ induced by valuations on $RH/I$ extending $v$, i.e.
\begin{align}\label{eq: BG Delta set}
\Delta^v_I(H) = \{\chi \in \mathrm{Hom}(H ;\R) \mid & \text{ there exists a valuation } w \colon RH/I \to \mathbb{R}_{\infty} \\
& \text{ such that } (w\circ\kappa)|_{R} = v \text{ and } (w\circ\kappa)|_H = \chi \},\notag
\end{align}
where $\kappa$ is the quotient map $R H\to RH/I$.
Then we have
\begin{equation}\label{eq:BG}
\Sigma^c(RH/I)=\bigcup\limits_{v(R)\geqslant 0} \mathrm{S}(\Delta_I^v(H)),
\end{equation}
where $v$ runs through all valuations of $R$ such that $v(R)\geqslant 0$ (we call this a non-negative valuation for short).   
\end{theorem}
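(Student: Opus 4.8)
The plan is to reduce the statement to \cref{thm BS1} and then translate the condition ``$\Ann(RH/I)\cap\mathscr{S}_\chi=\emptyset$'' into the existence of a valuation on $RH/I$ with prescribed restrictions. Since $\Ann(RH/I)=I$, by \cref{thm BS1} we must show that, for a fixed $\chi\in\mathrm{S}(H)$,
\[
I\cap\mathscr{S}_\chi=\emptyset
\quad\Longleftrightarrow\quad
\chi\in\mathrm{S}(\Delta_I^v(H))\ \text{for some non-negative valuation }v\text{ on }R.
\]
The ``$\Leftarrow$'' direction is the easy one: given a valuation $w$ on $RH/I$ with $(w\circ\kappa)|_R=v\geqslant 0$ and $(w\circ\kappa)|_H$ a positive multiple of $\chi$, take any $s=1+\sum_{h}a_h h\in\mathscr{S}_\chi$ with all $\chi(h)>0$. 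Then $(w\circ\kappa)(a_h h)=v(a_h)+(w\circ\kappa)(h)>0$ for each term (using $v(a_h)\geqslant 0$ and $(w\circ\kappa)(h)>0$), while $(w\circ\kappa)(1)=0$, so by the ultrametric inequality $(w\circ\kappa)(s)=0<\infty$; hence $\kappa(s)\neq 0$, i.e.\ $s\notin I$. This gives $I\cap\mathscr{S}_\chi=\emptyset$.

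The substantive direction is ``$\Rightarrow$'': from $I\cap\mathscr{S}_\chi=\emptyset$ we must \emph{construct} a valuation $w$ on $RH/I$ extending some non-negative valuation on $R$ and inducing a positive multiple of $\chi$ on $H$. The idea is to localize: the hypothesis says exactly that the image of $\mathscr{S}_\chi$ in $RH/I$ consists of non-zero-divisors-avoidance — more precisely that $0$ is not in the multiplicative set generated by the images of elements of $\mathscr{S}_\chi$, so $I$ survives in the localization $(RH/I)[\mathscr{S}_\chi^{-1}]$, i.e.\ this localization is a non-zero ring. Pick a maximal ideal, hence a field quotient $F$ of it; the composite $RH\to RH/I\to F$ sends every element of $\mathscr{S}_\chi$ to a unit. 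Now one builds a valuation on $F$ (or on a suitable extension) whose restriction to the image of $H$ recovers $\chi$ up to scaling and whose restriction to $R$ is non-negative: the condition that each $1+\sum a_h h$ with $\chi(h)>0$ is sent to something nonzero is precisely the input needed to run the standard valuation-existence argument (place $H$ in the value group via $\chi$, check that the subring generated is ``dominated'' in the appropriate sense, and invoke Chevalley's extension theorem for valuations, \cite[Chapter VI]{Bourbaki1998commutative}). One then pulls this valuation back along $RH\to F$ and checks it factors through $RH/I$ because $I$ maps to $0$. This is the argument of Bieri–Strebel \cite[Theorem 2.1]{BS81} and Bieri–Groves \cite[Theorem 8.1]{BG84}, and the main technical point — the real obstacle — is verifying that the valuation can be chosen with $w|_H$ a \emph{positive} real multiple of the given $\chi$ (not merely proportional over $\Q$ or with the wrong sign), which is where the one-sidedness of $\mathscr{S}_\chi$ (only $\chi(h)>0$ terms appear) is used in an essential way.

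Finally, passing from characters to the sphere is routine: replacing $w$ by a positive rescaling rescales $w|_H$ by the same positive factor, so the class in $\mathrm{S}(H)$ is well-defined, and the union over all non-negative valuations $v$ of $\mathrm{S}(\Delta_I^v(H))$ is exactly the set of $\chi\in\mathrm{S}(H)$ arising this way. Combining the two directions with \cref{thm BS1} yields \cref{eq:BG}. I would remark that when $R$ is a field the only non-negative valuation with $v(R)\geqslant 0$ forcing $v|_{R^\times}=0$ is not automatic — one allows all valuations trivial or not on $R$ — but for $R=\Z$ the non-negative valuations are the $p$-adic ones together with the trivial valuation, which is the case that feeds into \cref{prop three trop union Z}; I would point this out as the geometrically meaningful instance rather than reproving the general statement in full detail, since it is quoted verbatim from \cite{BS81,BG84}.
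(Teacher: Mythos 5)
The paper states this theorem as a direct citation of \cite[Theorem~2.1]{BS81} and \cite[Theorem~8.1]{BG84} without supplying a proof, and your reduction to \cref{thm BS1} (via $\Ann(RH/I)=I$), together with the fully worked easy direction and the deferral of the hard existence step to the same references, is a correct and slightly more explicit version of that citation-based approach. One inaccuracy in your closing aside: the non-negative valuations on $\Z$ are not exhausted by the $p$-adic valuations $v_p$ and the trivial valuation $v_0$ --- you have omitted the mod-$p$ valuations $\hat{v}_p$ of \cref{example:nonnegative valuations}(b), which are precisely the extra ingredient needed in \cref{prop three trop union Z}.
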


\begin{remark}
Bieri and Groves described  $\Sigma^c(M)$ without  assuming that $R$ is Noetherian, see \cite[Theorem 8.1]{BG84} for more details.  %When $R$ is Noetherian, the above theorem follows directly from \cite[Theorem 8.1]{BG84} and \cite[Section 2.2]{BG84}.
\end{remark}

Since we mainly focus later on the cases when $R$ is a field or $R=\Z$,  the above theorem in these two cases is explained in detail as follows. 
\begin{example}\label{example:nonnegative valuations}
\begin{enumerate}[label=(\alph*)]
\item Let $R=\bbmk$ be a field.  For any $a\in \bbmk^* \coloneqq \bbmk-\{0\}$, we have $$0=v(1)=v(a)+v(a^{-1}).$$ If $v$ is a non-negative valuation, $v(a)\geqslant 0$ and $v(a^{-1})\geqslant 0$. Hence the non-negative valuation $v$ can only be \textit{the trivial valuation} $v_0$, i.e. 
\begin{center}
    $v_0(a)=0$ for any $a\in \bbmk^*$ and $v_0(0)=\infty$.
\end{center}
 Then for any ideal $I\subseteq \bbmk H$, we have 
$$\Sigma^c(\bbmk H/I)=\mathrm{S}(\Delta^{v_0}_{I}(H)).$$

\item Let $R=\Z$. Then $v^{-1}(\infty)$ is a prime ideal of $\Z$. All valuations on $\Z$ are the following: 
\begin{itemize}
\item If $v^{-1}(\infty)= (p)$ for $p\neq 0$ a prime integer, then for any $a\notin (p)$ and $b\in (p)$, we have $v(a)<\infty=v(b)$, hence $v(a+b)=\min\{v(a),v(b)\}=v(a).$ Thus the valuation $v$ factors through $\Z/p\Z$, which is reduced to a valuation on the residue field $\mathbb{F}_p$. Since for any nonzero element $x\in \mathbb{F}_p$, $x^{p-1}=1$, we have $0=v(1)=v(x^{p-1})=(p-1)v(x)$, which means $v(x)=0$ for any $x$ nonzero.
%Since $v$ is non-negatively defined there, it must be trivial.
We denote this valuation on $\Z$ as $\hat{v}_p$, and call it \textit{the mod $p$ valuation}:
    $$\hat{v}_p(n) =
\begin{cases}
  \infty, & \text{if } p \mid n \\
  0, & \text{if } p \nmid n.
\end{cases}$$

\item If $v^{-1}(\infty)=(0)$, $v$ extends to a valuation on $\Q$ defined as $v(\frac{a}{b})=v(a)-v(b)$. By Ostrowski's theorem, it is equivalent to either the archimedean valuation, a $p$-adic non-archimedean valuation $v_p$ or a trivial valuation $v_0$. Noticing that the condition (ii) in Definition \ref{def valuation} is non-archimedean, $v$ has to be \textit{the $p$-adic valuation} $v_p$ or the trivial valuation $v_0$.
\end{itemize}

So it is direct to see that all valuations on $\Z$ are nonnegative.  Then for any ideal $I\subsetneq \Z H$, we have
$$\Sigma^c(\Z H/I)=\mathrm{S}(\Delta^{v_0}_I(H))\cup\bigcup_{p \text{\ prime}}\mathrm{S}\big(\Delta^{v_p}_I(H)\cup  \Delta^{\hat{v}_{p}}_I(H)\big), $$
where the set of primes $p$ in the union is finite thanks to  \cite[Theorem 8.2]{BG84}.

\item  In fact, as long as  $R$ is a discrete valuation domain, there are at most three types of non-negative valuations on $R$ up to multiplication by a positive real number. For more details, see \cite[section 8.4]{BG84}.
\end{enumerate}
\end{example}

The following two important theorems due to Bieri, Groves and Strebel are recorded here for later use.  

\begin{theorem}[\cite{BG84}, Corollarie 8.3 \& 8.4] \label{thm BG dense}
Let $H$ be a finitely generated abelian group with $\mathrm{rank}_{\Z} H \geqslant 1$ and 
$R$ a Dedekind domain. For  a finitely generated $R H$-module $M$,  $\Sigma^c(M)$ is a finite union of rationally defined convex cones over polyhedrons on $\mathrm{S}(H)$. In particular, $\Sigma^c(M)$ has dense rational points. 
\end{theorem}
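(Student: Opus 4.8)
The plan is to strip away the algebra using the results of Bieri, Groves and Strebel already recalled, reducing the statement to the polyhedrality and rationality of a single tropical variety, and then to bound the valuations that can contribute.

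First, a reduction to prime ideals. Since $R$ is a Dedekind domain it is Noetherian, hence so is the group ring $RH = R[H]$ (the Hilbert basis theorem applies, $H$ being finitely generated abelian). Thus \cref{BS 2} is available: writing $\sqrt{\Ann(M)} = \bigcap_{j=1}^{q}\mathfrak{p}_j$ with $\mathfrak{p}_1,\dots,\mathfrak{p}_q$ the finitely many minimal primes over $\Ann(M)$, we obtain $\Sigma^c(M) = \bigcup_{j=1}^q \Sigma^c(RH/\mathfrak{p}_j)$, so it suffices to treat $M = RH/\mathfrak{p}$ for a single prime $\mathfrak{p} \subsetneq RH$. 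By \cref{thm:BG_main_theorem},
$$\Sigma^c(RH/\mathfrak{p}) \;=\; \bigcup_{v(R)\geqslant 0}\mathrm{S}\big(\Delta^v_{\mathfrak{p}}(H)\big),$$
the union over all non-negative valuations $v$ of $R$. These I would classify exactly as in \cref{example:nonnegative valuations}: if $v$ is non-trivial with $v^{-1}(\infty) = \mathfrak{q}\ne 0$, then $\mathfrak{q}$ is maximal in the Dedekind domain $R$, $v$ descends to a non-negative — hence trivial — valuation on the field $R/\mathfrak{q}$, so $v = \hat{v}_{\mathfrak{q}}$; if $v^{-1}(\infty) = (0)$, then $v$ extends to the fraction field $K$ and its valuation ring is an overring of $R$ inside $K$, necessarily some $R_{\mathfrak{q}}$ or $K$, so $v$ is a $\mathfrak{q}$-adic valuation $v_{\mathfrak{q}}$ or the trivial valuation $v_0$; in all cases the relevant value group sits inside $\Z\subseteq\R$.

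The technical heart is to show, for each fixed non-negative valuation $v$, that $\mathrm{S}(\Delta^v_{\mathfrak{p}}(H))$ is a finite union of rationally defined convex cones over polyhedra on $\mathrm{S}(H)$. Pushing $\mathfrak{p}$ forward to $KH$, the set $\Delta^v_{\mathfrak{p}}(H)$ is the Bieri--Groves valuation set — equivalently, the tropical variety in the sense of \cite{EKL} — of the resulting ideal of $KH$ with respect to the valuation that $v$ induces on $K$: this is the trivial valuation when $v = v_0$ or $v = \hat{v}_{\mathfrak{q}}$, and the discrete valuation $v_{\mathfrak{q}}$ otherwise. The Bieri--Groves structure theorem asserts that this set is a finite union of polyhedra, rational with respect to $H$ and the value group of $v$, of dimension $\dim KH/\mathfrak{p}$. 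I would prove it by induction on this Krull dimension: the top-dimensional stratum is governed by the support of a single Laurent polynomial and is treated by a Newton-polytope argument, while lower strata are handled by localizing away from a hypersurface and applying the inductive hypothesis. Projecting to $\mathrm{S}(H)$ then turns each rational polyhedron into a rationally defined spherical polytope. This step is the genuine obstacle: it is exactly the point where tropical geometry enters, and carrying it out in full is the substantial part of Bieri and Groves' work.

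It remains to see that only finitely many valuations contribute. Fix a nonzero $f = \sum_h a_h h \in \mathfrak{p}$; only finitely many primes $\mathfrak{q}$ of $R$ contain some nonzero coefficient $a_h$, and for every other $\mathfrak{q}$ one has $\mathrm{S}(\Delta^{v_{\mathfrak{q}}}_{\mathfrak{p}}(H)) \cup \mathrm{S}(\Delta^{\hat{v}_{\mathfrak{q}}}_{\mathfrak{p}}(H)) \subseteq \mathrm{S}(\Delta^{v_0}_{\mathfrak{p}}(H))$; this is \cite[Theorem 8.2]{BG84}. Hence the union displayed above is finite, so $\Sigma^c(M)$ is a finite union of rationally defined convex cones over polyhedra on $\mathrm{S}(H)$. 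Since the rational points of any rational polyhedral cone are dense in it, and the class of a nonzero rational character is a rational point of $\mathrm{S}(H)$, it follows that $\Sigma^c(M)$ has dense rational points. The main difficulty is the polyhedrality of the previous paragraph; the finiteness here, namely \cite[Theorem 8.2]{BG84}, is a second, more elementary, ingredient.
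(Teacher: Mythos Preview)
The paper does not give its own proof of this theorem: it is stated as a result quoted from the literature, specifically \cite[Corollaries~8.3 \& 8.4]{BG84}, and ``recorded here for later use'' without any argument. There is therefore nothing in the paper to compare your proposal against.

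That said, your sketch is a faithful outline of the Bieri--Groves argument itself: reduction to prime ideals via \cref{BS 2}, the valuation decomposition of \cref{thm:BG_main_theorem}, classification of non-negative valuations on a Dedekind domain (as in \cref{example:nonnegative valuations}(c)), polyhedrality and rationality of each $\Delta^v_{\mathfrak{p}}(H)$, and finiteness of contributing primes via \cite[Theorem~8.2]{BG84}. One small imprecision: for the mod-$\mathfrak{q}$ valuation $\hat v_{\mathfrak{q}}$ you should not push $\mathfrak{p}$ forward to $KH$ but rather to $(R/\mathfrak{q})H$, working over the residue field with the trivial valuation --- compare the proof of \cref{prop three trop union Z}. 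Otherwise the structure is sound, and you correctly identify the polyhedrality of the individual $\Delta^v_{\mathfrak{p}}(H)$ as the substantive content, which is indeed the hard part of Bieri and Groves' original paper rather than anything the present paper supplies.
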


\begin{theorem}[\cite{BS80}, Theorem 2.4]\label{thm:BS_finitely_generated_module}
Let $H$ be a finitely generated abelian group with $\mathrm{rank}_{\Z} H \geqslant 1$ and $M$ a finitely generated $\Z H$-module. Then the abelian group underlying $M$ is finitely generated over $\Z$ if and only if $\Sigma^c(M)=\emptyset$.
\end{theorem}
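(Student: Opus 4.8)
The plan is to prove the two implications separately. The forward implication is a triviality: if the underlying abelian group of $M$ is generated over $\Z$ by $m_1,\dots,m_r$, then since $\Z\subseteq\Z H_\chi$ for every $\chi\in\mathrm{S}(H)$, these same elements generate $M$ over each $\Z H_\chi$, so $\Sigma(M)=\mathrm{S}(H)$, i.e. $\Sigma^c(M)=\emptyset$. The converse is the content, and I would deduce it from \cref{thm:BG_main_theorem} after reducing to the case of a prime quotient.

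So assume $\Sigma^c(M)=\emptyset$ and $M\neq 0$. Write $\sqrt{\Ann(M)}=\bigcap_{j=1}^q\IP_j$ with $\IP_1,\dots,\IP_q$ the finitely many minimal primes over $\Ann(M)$. By \cref{BS 2}, $\emptyset=\Sigma^c(M)=\bigcup_{j=1}^q\Sigma^c(\Z H/\IP_j)$, so $\Sigma^c(\Z H/\IP_j)=\emptyset$ for every $j$. Granting the prime case below, each $\Z H/\IP_j$ is finitely generated over $\Z$; then $\Z H/\sqrt{\Ann(M)}$ embeds into $\prod_j\Z H/\IP_j$ and is therefore finitely generated over $\Z$, and filtering $\Z H/\Ann(M)$ by the powers of its nilradical — nilpotent because $\Z H$ is Noetherian — shows that $\Z H/\Ann(M)$, and hence the module $M$ over it, is finitely generated over $\Z$. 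It thus suffices to treat $M=A\coloneqq\Z H/\IP$ for $\IP\subsetneq\Z H$ a prime ideal.

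Now $A$ is a domain which is finitely generated as a $\Z$-algebra by the images $\bar h_1^{\pm 1},\dots,\bar h_m^{\pm 1}$ of a generating set $h_1,\dots,h_m$ of $H$; let $K=\mathrm{Frac}(A)$. I claim $A\subseteq\mathcal{O}_w\coloneqq\{x\in K\mid w(x)\geqslant 0\}$ for every valuation $w$ of $K$ (valuations being $\R_\infty$-valued, as in \cref{def valuation}). Indeed, $w|_A$ is a valuation of $A$ extending some valuation $v$ of $\Z$, and every valuation of $\Z$ is non-negative by \cref{example:nonnegative valuations}; since each $\bar h_i$ is a unit of $A$, the map $h\mapsto w(\bar h)$ is a real character $\chi\in\Delta^v_\IP(H)$, and $\Sigma^c(A)=\emptyset$ forces $\mathrm{S}(\Delta^v_\IP(H))=\emptyset$ by \cref{thm:BG_main_theorem}, hence $\chi=0$, i.e. $w(\bar h_i)=0$ for all $i$. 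Thus all the $\bar h_i^{\pm 1}$, and also $\Z$, lie in $\mathcal{O}_w$, whence $A\subseteq\mathcal{O}_w$. Because $K$ is finitely generated over its prime field, an element of $K$ excluded by no $\R_\infty$-valued valuation of $K$ that is non-negative on the prime ring must be integral over $\Z$ (this refinement of the classical ``integral closure $=$ intersection of valuation rings'' statement holds because such fields carry enough discrete valuations). Hence $A$ is integral over $\Z$, and being a finitely generated $\Z$-algebra it is a finitely generated $\Z$-module. This completes the prime case, and with it the theorem.

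The reduction to the prime case and the passage from ``integral, finitely generated algebra'' to ``finitely generated module'' are routine. The step I expect to be the real obstacle is converting $\Sigma^c(A)=\emptyset$ into the integrality of $A$ over $\Z$: this is where the arithmetic of $\Z$ is genuinely used, since the argument only goes through because every valuation of $\Z$ is non-negative (Ostrowski), so that all the relevant $\R_\infty$-valued valuations of $\mathrm{Frac}(A)$ — and there are enough of them, as $\mathrm{Frac}(A)$ is finitely generated over the prime field — are eligible inputs to \cref{thm:BG_main_theorem}. Over an arbitrary coefficient ring the argument would only see the valuation rings lying over non-negative valuations of the base, and the conclusion would weaken accordingly.
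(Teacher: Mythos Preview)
The paper does not give its own proof of this statement; it is quoted verbatim from \cite{BS80} as Theorem~2.4 there and used as a black box. So there is no in-paper argument to compare yours against.

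Your argument is correct. The forward implication and the reduction to $M=\Z H/\IP$ with $\IP$ prime, via \cref{BS 2} and the nilradical filtration, are routine and sound. The valuation step also goes through: from $\Sigma^c(\Z H/\IP)=\emptyset$ and \cref{thm:BG_main_theorem} you obtain $\Delta^v_\IP(H)\subseteq\{0\}$ for every valuation $v$ on $\Z$, so every $\R_\infty$-valued valuation $w$ of $K=\mathrm{Frac}(A)$ satisfies $w(\bar h_i)=0$ for all $i$, whence $A\subseteq\mathcal O_w$. The claim that for $K$ finitely generated over its prime field this forces $A$ to be integral over $\Z$ is true and can be checked by hand: if some $\bar h_i$ were transcendental over the prime field one extends the $\bar h_i$-adic (or degree) valuation on $k(\bar h_i)$ to a rank-one valuation of $K$ via Gauss extensions, getting $w(\bar h_i)\neq 0$; if all $\bar h_i$ are algebraic and $\mathrm{char}\,K=0$, the non-archimedean places of the number field $\Q(\bar h_1,\dots,\bar h_m)$ extend to rank-one valuations of $K$ and force each $\bar h_i$ into $\mathcal O_K^\times$; in positive characteristic one lands in a finite field. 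Either way $A$ is module-finite over $\Z$.

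One point worth flagging: your proof routes through \cref{thm:BG_main_theorem}, which is \cite[Theorem~8.1]{BG84} and postdates \cite{BS80}; so whatever Bieri and Strebel originally did cannot have been this. Their argument starts instead from the characterisation $\Sigma^c(M)=\{\chi:\Ann(M)\cap\mathscr S_\chi=\emptyset\}$ (recorded here as \cref{thm BS1}) and uses a compactness argument on $\mathrm S(H)$ to produce finitely many annihilating elements of the shape $1+(\text{higher }\chi\text{-degree})$, from which finite generation over $\Z$ is extracted directly. Your route is a legitimate \emph{a posteriori} proof within the framework of this paper, and it has the merit you yourself point out: it isolates exactly where the arithmetic of $\Z$ enters, namely that by Ostrowski every valuation of $\Z$ is non-negative, so all rank-one valuations of $K$ are admissible inputs to \cref{thm:BG_main_theorem}.
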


\section{Connections with tropical geometry}\label{sec:tropical}
In this section, we focus on the cases where $R$ is a field $\bbmk$ or $R=\Z$.
We will use terminologies in tropical geometry to re-explain  \cref{thm BS1} and \cref{thm:BG_main_theorem}. The readers may refer to the monograph \cite{MacStu} for the required background on tropical geometry.

\subsection{Tropical variety over a valued field}
Let $\bbmk$ be a fixed field endowed with a possibly trivial valuation $v:\bbmk\to \R_{\infty}$. 
We first assume that  $H$ is the \textit{free} abelian group $\Z^n$. The essential modification needed to drop the condition of freeness will be provided in \cref{def torsion} later. Let $\bbmk H=\bbmk[x_1^{{\pm 1} },\ldots,x_n^{{\pm 1} }]$ be the Laurent polynomial ring.%, where $\{x_1,\ldots,x_n\}$ are the coordinates and also are the set of generators of $H$ by abuse of notation. 
%Consider the Laurent polynomial ring $\bbmk \Z^n=\bbmk[t_1^{{\pm 1}},\ldots,t_n^{{\pm 1}}]$. 

\begin{definition}[Tropical variety over a valued field]\label{def tropical variety}
For an ideal $I\subseteq \bbmk[x_1^{{\pm 1} },\ldots,x_n^{{\pm 1} }]$, there are three  ways to define  \textit{the tropical variety of $I$}. 

\begin{enumerate}[label=(\roman*)] 
    \item For any nonzero $f=\sum\limits_{\textbf{u}\in\Z^n}a_{\textbf{u}} x^{\textbf{u}}\in \bbmk[x_1^{{\pm 1} },\ldots,x_n^{{\pm 1} }]$ and the valuation $v$ on $\bbmk$, 
 the tropical polynomial $\trop_{\bbmk,v}(f)\colon \R^n\to \R$ is defined   by
\begin{equation}\label{eq:trop_of_polynomial}
\trop_{\bbmk,v}(f)({\bf{w}})= \min\limits_{{\textbf{u}}\in \Z^n} \{ v(a_{{\bf{u}}})+{\bf{u}}\cdot{\textbf{w}} \mid a_{\textbf{u}}\neq 0\}, 
\end{equation}
which is a piecewise linear  concave function.
    The tropical hypersurface  associated to $f$ is defined as the set 
    \begin{center}
      $\Trop_{\bbmk,v}(f) \coloneqq\{ \bw\in \R^n \mid \text{ the minimal in\ }  \cref{eq:trop_of_polynomial} \text{ is achieved at least twice}\}$.
    \end{center} 
    
The tropical variety of $I \subseteq  \bbmk[x_1^{{\pm 1} },\ldots,x_n^{{\pm 1} }]$ is defined as 
\begin{equation}\label{defn:trop_first_def}
\Trop_{\bbmk,v}(I)=\bigcap_{f\in I} \Trop_{\bbmk,v}(f). 
\end{equation}
    %which is a subset of $\R^n$

\item 
Fix  $\bw\in \R^n$. 
For any nonzero $f=\sum\limits_{\textbf{u}\in\Z^n}a_{\textbf{u}} x^{\textbf{u}}\in \bbmk[x_1^{{\pm 1} },\ldots,x_n^{{\pm 1} }]$,  the initial form $\mathrm{in}_{\textbf{w},v}(f)$ is the sum of all terms in $f$  where the minimal in  \cref{eq:trop_of_polynomial} is achieved. The initial ideal $\mathrm{in}_{\textbf{w},v}(I)$ is the ideal generated by $\mathrm{in}_{\textbf{w},v}(f)$ where $f$ runs over $I$. Set $$\Trop_{\bbmk, v}(I)\coloneqq \{\textbf{w}\in \R^n\mid 
\mathrm{in}_{\textbf{w},v}(I) \neq \bbmk[x_1^{{\pm 1}},\cdots,x_n^{{\pm 1}}]\}.$$

\item Let $\overline{\bbmk}$ be an algebraically closed field extending $\bbmk$ such that the extension of $v$ to $\overline{\bbmk}$ is nontrivial, still denoted as $v$. Such field always exists. In fact, if the valuation $v$ on $\bbmk$ is nontrivial, one can take $\overline{\bbmk}$ to be the algebraic closure of $\bbmk$. On the other hand, if the valuation $v$ on $\bbmk$ is trivial, one can take $\overline{\bbmk}$ to be the field of Puiseux series $\bigcup_{n\geqslant 1} \K((t^{1/n}))$ if $ \mathrm{char}(\bbmk)=0$ and $\K((t^{\Q})) $ if $\mathrm{char}(\bbmk)>0$. Here $\K$ is an algebraic closure of $\bbmk$. Both fields $\bigcup_{n\geqslant 1} \K((t^{1/n}))$ and $\K((t^{\Q})) $ have nontrivial $\Q$-valued valuation, see e.g. \cite[Example 1.2.2]{EKL}.
The specific choice of $\overline{\bbmk}$ is not important, as long as it is algebraically closed with a nontrivial valuation (see \cite[Theorem 3.2.4 and Remark 3.2.5]{MacStu}). The tropical variety of $I$ is then defined as the closure (under Euclidean topology) of the subset of points $(v(x_1),\cdots,v(x_n))$ where $(x_1,\cdots, x_n)$ belongs to the variety of the ideal $I\otimes_{\bbmk}\overline{\bbmk}$ in $(\overline{\bbmk}^*)^n$.

%Let $Z=\mathrm{Spec} (\bbmk[x_1^{{\pm 1} },\ldots,x_n^{{\pm 1} }]/I)$ denote the corresponding subscheme. Then $Z(\overline{\bbmk})$, the set of $\overline{\bbmk}$-points of $Z$,  is a subset in $(\overline{\bbmk}^*)^n$.  %Then the coordinate ring of $W(\K)$ is $A=\bbmk[W]=\bbmk H/I$. %Denote $X(\mathbb{K})$ as the set of $\mathbb{K}$-points in $(\mathbb{K}^{\times})^n$.
\end{enumerate}
\end{definition}

The following fundamental theorem of tropical algebraic geometry in \cite[Theorem 3.2.3]{MacStu} shows the equivalence of the above three definitions.

\begin{theorem} \label{thm fundamental trop} (\textbf{The Fundamental theorem of tropical algebraic geometry}) Let $I$ be an ideal in $\bbmk[x_1^{{\pm }},\cdots,x_n^{{\pm }}]$ with a possible trivial valuation $v$ on $\bbmk$. Let $Z=\mathrm{Spec} (\bbmk[x_1^{{\pm } },\ldots,x_n^{{\pm } }]/I)$ denote the corresponding subscheme. Then $Z(\overline{\bbmk})$, the set of $\overline{\bbmk}$-points of $Z$,  is a subvariety in $(\overline{\bbmk}^*)^n$.
With the above notations and assumptions,  the following three subsets of $\R^n$ coincide:
\begin{enumerate}
\item[(i)] the subset $\Trop_{\bbmk,v}(I)$ as defined in \cref{defn:trop_first_def},
\item[(ii)] the set  $\{\textbf{w}\in \R^n\mid \mathrm{in}_{\textbf{w},v}(I)\neq \bbmk[x_1^{{\pm }},\cdots,x_n^{{\pm }}]\}$,
\item[(iii)] the Euclidean closure of the following set of componentwise valuations of points in $Z(\overline{\bbmk})$:
$$v(Z(\overline{\bbmk}))= \{(v(x_1),\cdots,v(x_n))\in \R^n\mid (x_1,\cdots,x_n)\in Z(\overline{\bbmk})\}.$$
\end{enumerate}
In particular, $\Trop_{\bbmk,v}(I)$ only depends on $\sqrt{I}$. If $\sqrt{I}=\bigcap\limits_{j=1}^q \IP_j$, where $\{\IP_j\}_{j=1}^q$ are all minimal prime ideals containing $I$, then
$$ \Trop_{\bbmk,v}(I)=\Trop_{\bbmk,v}(\sqrt{I})=\bigcup_{j=1}^q \Trop_{\bbmk,v}(\IP_j).$$
\end{theorem}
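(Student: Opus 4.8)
The plan is to cite \cite[Theorem 3.2.3]{MacStu} for the three-way equivalence (i)$=$(ii)$=$(iii), since this is precisely the fundamental theorem of tropical algebraic geometry over a valued field; but let me indicate the shape of that argument and then give a self-contained deduction of the two ``in particular'' clauses. The only elementary inclusion is (iii)$\subseteq$(i): if $(x_1,\dots,x_n)\in Z(\overline{\bbmk})$ satisfies $v(x_i)=w_i$, then for every $f=\sum_{\mathbf{u}}a_{\mathbf{u}}x^{\mathbf{u}}\in I$ the relation $\sum_{\mathbf{u}}a_{\mathbf{u}}x^{\mathbf{u}}=0$ together with the ultrametric inequality forces $\min_{\mathbf{u}}\{v(a_{\mathbf{u}})+\mathbf{u}\cdot\mathbf{w}\}$ to be attained at least twice, i.e.\ $\mathbf{w}\in\Trop_{\bbmk,v}(f)$; since each $\Trop_{\bbmk,v}(f)$ is closed, the set in (i) is closed, so it contains the Euclidean closure appearing in (iii).

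For (i)$=$(ii) one unwinds the definitions: $\mathbf{w}\notin$ (i) means that some $f\in I$ has $\mathrm{in}_{\mathbf{w},v}(f)$ equal to a single monomial, while $\mathbf{w}\notin$ (ii) means that $\mathrm{in}_{\mathbf{w},v}(I)$ contains a unit, hence (in a Laurent polynomial ring) a monomial. One implication is immediate, and the reverse is a lifting statement---if the initial ideal contains a monomial, then already some $f\in I$ has monomial initial form---which I would prove using Gr\"obner bases over a valued field: choosing a term order refining $\mathbf{w}$, the ideal $\mathrm{in}_{\mathbf{w},v}(I)$ is generated by the initial forms of a Gr\"obner basis of $I$, and a division argument then produces the required $f$. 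The substantive inclusion is (ii)$\subseteq$(iii). Here I would use that the set (i)$=$(ii) is a rational polyhedral complex (the support of the Gr\"obner complex), so that it suffices to show that every \emph{rational} $\mathbf{w}$ in it is the vector of coordinatewise valuations of some point of $Z(\overline{\bbmk})$; for such $\mathbf{w}$ the ideal $\mathrm{in}_{\mathbf{w},v}(I)$ is a proper ideal over the residue field, hence has a zero $\bar{a}$ over an algebraic closure of that field, and one lifts $\bar{a}$ by a Hensel-type successive-approximation argument to a point of $Z(\overline{\bbmk})$ with coordinatewise valuation exactly $\mathbf{w}$---in the trivially valued case working over the Puiseux field, whose value group is all of $\Q$. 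This lifting step is the main obstacle, and is the reason the theorem is genuinely deep; I would not attempt to reprove it here.

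Granting the equivalence with (iii), the remaining assertions are formal. The point set $Z(\overline{\bbmk})\subseteq(\overline{\bbmk}^*)^n$ depends only on the reduced structure, so $Z(\overline{\bbmk})=V(\sqrt{I})(\overline{\bbmk})$ and hence $\Trop_{\bbmk,v}(I)=\Trop_{\bbmk,v}(\sqrt{I})$. Writing $\sqrt{I}=\bigcap_{j=1}^{q}\IP_j$ with $\IP_1,\dots,\IP_q$ the minimal primes over $I$, we get $Z(\overline{\bbmk})=\bigcup_{j=1}^{q}V(\IP_j)(\overline{\bbmk})$; coordinatewise valuation carries this finite union to $\bigcup_{j=1}^{q}v\big(V(\IP_j)(\overline{\bbmk})\big)$, and since the Euclidean closure of a finite union equals the union of the closures, we conclude $\Trop_{\bbmk,v}(I)=\bigcup_{j=1}^{q}\Trop_{\bbmk,v}(\IP_j)$.
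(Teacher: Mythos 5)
Your proposal is correct and takes essentially the same approach as the paper, which simply cites \cite[Theorem 3.2.3]{MacStu} for the equivalence (i)$=$(ii)$=$(iii). Your derivation of the two ``in particular'' clauses from formulation (iii)---that $Z(\overline{\bbmk})$ only sees $\sqrt{I}$, that $Z(\overline{\bbmk})=\bigcup_j V(\IP_j)(\overline{\bbmk})$, and that Euclidean closure commutes with finite unions---is exactly the standard deduction the paper leaves implicit, and your proof sketch of the three-way equivalence accurately reflects the structure of the argument in \cite{MacStu}.
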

We recall the structure theorem for tropical varieties in \cite[Theorem 3.3.5]{MacStu} with notations and terms explained there in detail.

\begin{theorem}\label{thm structure trop} \textbf{(Structure theorem for tropical variety)} Let $I$ be a prime ideal in $\bbmk[x_1^{{\pm 1}},\cdots,x_n^{{\pm 1}}]$  with $\dim Z(\overline{\bbmk})=d$. Then $\Trop_{\bbmk,v}(I)$ is the support of
a balanced weighted $v(\bbmk^*)$-valued rational polyhedral complex pure of dimension $d$. %Moreover, that polyhedral complex is connected through codimension $1$.
\end{theorem}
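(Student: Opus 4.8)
The plan is to follow the proof of \cite[Theorem 3.3.5]{MacStu}; since in this paper we only use the statement, I describe the architecture of the argument and where its weight lies rather than reproducing it. The first step is to reduce to a homogeneous ideal: homogenising $I$ with one new variable $x_0$ produces a homogeneous prime $\widetilde I\subseteq\bbmk[x_0,\dots,x_n]$, and one checks directly from \cref{eq:trop_of_polynomial} that $\Trop_{\bbmk,v}(\widetilde I)$ is invariant under translation by $\R\cdot(1,\dots,1)$, that its slice $\{w_0=0\}$ equals $\Trop_{\bbmk,v}(I)$, and that $\dim V(\widetilde I)=d+1$. For homogeneous ideals over the valued field $(\bbmk,v)$ one has the \emph{Gr\"obner complex}: the initial ideals $\mathrm{in}_{\bw,v}(\widetilde I)$ take only finitely many values, the locus where $\mathrm{in}_{\bw,v}(\widetilde I)$ equals a fixed ideal is the relative interior of a polyhedron cut out by inequalities of the form $v(a_\bu)+\bu\cdot\bw\leqslant v(a_{\bu'})+\bu'\cdot\bw$ with $\bu,\bu'\in\Z^{n+1}$, and these polyhedra assemble into a polyhedral complex. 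By the Fundamental Theorem (\cref{thm fundamental trop}), $\Trop_{\bbmk,v}(I)$ is the subcomplex formed by the cells on which the initial ideal contains no monomial, which gives the required polyhedral complex structure; since the defining inequalities have integral linear part and constant terms in $v(\bbmk^*)$, all vertices lie in $v(\bbmk^*)^{n+1}$ and all edge directions are rational, so the complex is $v(\bbmk^*)$-valued rational. Finally one equips each maximal cell $\sigma$ with the weight $\mathrm{mult}(\mathrm{in}_{\bw,v}(\widetilde I))$, the multiplicity of the zero-dimensional ideal obtained from $\mathrm{in}_{\bw,v}(\widetilde I)$ after quotienting out the lineality lattice of $\sigma$, for $\bw$ in the relative interior of $\sigma$, and checks this is independent of $\bw$.

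Purity of dimension $d$ follows from the local description of $\Trop_{\bbmk,v}(I)$ combined with a dimension count. A neighbourhood of a point $\bw\in\Trop_{\bbmk,v}(I)$ inside the tropical variety agrees, up to a translation, with the \emph{star} $\Trop(\mathrm{in}_{\bw,v}(I))$ computed over the residue field with the \emph{trivial} valuation; if $\bw$ is in the relative interior of a maximal cell $\sigma$, this local model is the linear span of $\sigma$, a linear space of dimension $\dim\sigma$. On the other hand the weight degeneration to an initial ideal is flat, so $\dim V(\mathrm{in}_{\bw,v}(I))=\dim V(I)=d$, and over a trivially valued field $\dim\Trop=\dim V$ for every ideal (reduce to minimal primes and use the equality for prime ideals). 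Comparing the two computations forces $\dim\sigma=d$, so every maximal cell has dimension $d$.

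The genuinely hard point is balancing at a cell $\tau$ of codimension one. By the same star description, after quotienting out the lineality of $\tau$ the question reduces to showing that a one-dimensional weighted fan of the shape $\Trop(J)$, with $J$ a one-dimensional homogeneous ideal over a trivially valued field, is balanced at the origin; since both the fan and its weights are additive over the minimal primes of $J$, one may assume $J$ is prime, i.e.\ $J$ is the homogeneous ideal of an irreducible curve $C$. Balancing is then the single relation $\sum_i m_i u_i=0$ over the primitive ray generators $u_i$ with multiplicities $m_i$: one realises $C$ inside a toric variety whose fan contains the rays $\R_{\geqslant 0}u_i$, identifies $m_i$ with the intersection number of the closure of $C$ with the torus-invariant boundary divisor of $\R_{\geqslant 0}u_i$, and then $\sum_i m_i u_i=0$ becomes the statement that a principal divisor on a complete curve has degree zero, equivalently $\sum_P \mathrm{ord}_P(f)=0$ for a nonzero rational function $f$ on $C$. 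I expect this last reduction to be the main obstacle: matching the algebraically defined weights of the structure theorem with the toric intersection multiplicities, so that ``balanced'' in the polyhedral sense really does coincide with the degree-zero statement for curves. Everything preceding that step is essentially Gr\"obner-basis combinatorics over a valued field together with standard commutative algebra.
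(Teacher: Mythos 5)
The paper does not prove this theorem; it is stated as a recollection of \cite[Theorem 3.3.5]{MacStu}, with the authors simply deferring to that source for the relevant notions, so there is no ``paper's own proof'' against which to measure you. You recognise this correctly and instead supply an outline of the Maclagan--Sturmfels argument, and that outline is accurate in all its essential moves: homogenisation to access the Gr\"obner complex, identifying $\Trop_{\bbmk,v}(I)$ as the monomial-free subcomplex via the Fundamental Theorem, purity from the star/initial-ideal local model combined with flatness of Gr\"obner degeneration and the Bieri--Groves dimension equality, and balancing reduced to the one-dimensional prime case and settled by a degree-zero statement on a complete curve. You also correctly locate the technical weight of the argument in the last step, where the combinatorially defined multiplicities must be matched with intersection numbers on a toric compactification.

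Two harmless imprecisions, since you flagged that you were sketching architecture: because $I$ lives in the Laurent ring $\bbmk[x_1^{\pm 1},\dots,x_n^{\pm 1}]$ rather than the polynomial ring, the homogenisation step is not just ``adjoin $x_0$'' --- one passes to the contraction $I\cap\bbmk[x_1,\dots,x_n]$, homogenises, and saturates by the product of the variables; and the weight on a maximal cell is the sum of multiplicities of the minimal primes of $\mathrm{in}_{\bw,v}(I)$ (after translating by a torus point so that the quotient by the lineality torus is zero-dimensional), rather than a single length. Neither affects the soundness of your outline as a description of the cited proof.
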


%Since $\Trop_{\bbmk,v}(I)$ only depends on $\sqrt{I}$, sometimes we also use the notation $\Trop_{\bbmk,v}(Z)$ for $Z$ the zero loci of $I$.
%\\
%\begin{remark}
 %In the case of hypersurface, the above theorem was stated in the early 1990s in an unpublished manuscript by Kapranov. A proof of the above theorem first appeared in \cite{EKL} due to Einsiedler, Kapranov and Lind.   
%\end{remark}
Next we define the tropical variety when $H$ is abelian but not necessarily free.

\begin{definition}\label{def torsion}\textbf{(The modification from free abelian to abelian)}
Assume that $H$ has non-trivial torsion part and $H\cong \Z^n\oplus \Z/d_1\Z \oplus \cdots \oplus \Z/d_m\Z$. Then there exists a natural abelian group epimorphism $\psi\colon\Z^{n+m}\twoheadrightarrow H$, which induces a ring epimorphism $\bbmk \Z^{n+m} \twoheadrightarrow \bbmk H$
and an embedding $$\psi^*\colon \mathrm{Hom}(H;\R) \hookrightarrow \mathrm{Hom}(\Z^{n+m};\R). $$
Identify $\bbmk \Z^{n+m} $ with  $\bbmk[x_1^{\pm 1},\cdots,x_n^{\pm 1};y_1^{\pm 1},\cdots,y_m^{\pm 1}]$. Consider $K=( y_1^{d_1}-1,\cdots,y_m^{d_m}-1)$ an ideal in $\bbmk \Z^{n+m}$. Then $\bbmk H \cong \bbmk \Z^{n+m}/K$ is a quotient ring. 
For any ideal $I\subseteq \bbmk H$,   there exists a unique ideal $\tilde{I}\subseteq\bbmk \Z^{n+m}$ containing $K$ such that $\bbmk \Z^{n+m}/\tilde{I}\cong \bbmk H/I.$
For any valuation $v$ on $\bbmk\Z^{n+m}/\tilde{I}$, the relation $y_i^{d_i}-1$ in the ideal $K$ gives $$v(y_i^{d_i})=d_i \cdot v(y_i)=v(1)=0,$$ which implies that $v(y_i)=0$ for any $1\leqslant i \leqslant m$. Since $ \tilde{I}\supseteq K$, $$\Trop_{\bbmk,v}(\tilde{I}) \subseteq \Trop_{\bbmk,v}(K) =\psi^*(\mathrm{Hom}(H;\R)).$$ 
Then we can define $$\Trop_{\bbmk,v}(I) \coloneqq (\psi^*)^{-1}\Trop_{\bbmk,v}(\tilde{I}).$$
\end{definition}

%Now consider $H$ a finitely generated abelian group with a non-trivial torsion part $T$, written as $H=\Z^n\oplus T$. One can also define the tropical variety for any proper ideal $I\subset \bbmk H$ with a valuation $v$ on $\bbmk$. 
\begin{remark} \label{rem Suciu}
One can also understand $\Trop_{\bbmk,v}(I)$ as follows.  
Consider an algebraically closed field $\overline{\bbmk}$ containing $\bbmk$ with nontrivial valuation as in \cref{def tropical variety}(iii).  
The variety associated with the coordinate ring $\overline{\bbmk} H$ is $\coprod (\overline{\bbmk}^*)^n$, a finite disjoint union of $(\overline{\bbmk}^*)^n$,  %\footnote{This is a finitely many disjoint copies of $(\overline{\bbmk}^*)^{n}$, with each element in $T$ as the ``index'' of each copy. This number is determined by $\mathrm{char}(\bbmk)$.}, 
with identity $1$ corresponding to the trivial representation $H\to \overline{\bbmk}^*$. The connected component containing $1$ is an affine torus $(\overline{\bbmk}^*)^n$ and any other connected component is a translation of this one, by characters of finite order induced by the torsion part of $H$. Note that for any character of finite order, its valuation has to be $0$. 
Let $Z(\overline{\bbmk} )$ be the $\overline{\bbmk} $-points of $I$ and denote its connected components as $\{ Z_j(\overline{\bbmk} )\}_{1\leqslant j \leqslant q}$. Each connected component is contained in one of the connected components of $\coprod (\overline{\bbmk}^*)^n$. Up to translation via a torsion element, all the connected components can be considered as a subset in $(\overline{\bbmk}^*)^n.$
Then the tropical variety $\Trop_{\bbmk,v}(I) $ is indeed the set
\begin{equation}
\bigcup\limits_{1\leqslant j \leqslant q} \overline{ \{(v(x_1),\cdots,v(x_n))\mid (x_1,\cdots,x_n)\in Z_j(\overline{\bbmk})\subset  (\overline{\bbmk}^*)^n\}}.
\end{equation}
\end{remark}

Einsiedler, Kapranov, and Lind first proved in \cite[Corollary 2.2.6]{EKL} that the tropical variety is indeed the set  defined by Bieri and Groves using valuations. We slightly generalize their results as follows.  
\begin{proposition} \label{prop EKL}
Consider a valuation $v$ on $\bbmk$ and a finitely generated abelian group $H$ with $\rank_\Z(H)\geqslant 1$. For an ideal $I \subsetneq \bbmk H$, let $ \Delta^v_I(H)$ denote the set considered in  \cref{thm:BG_main_theorem}. Then we have
    $$ \Delta^v_I(H)= \Trop_{\bbmk,v} (I).$$
\end{proposition}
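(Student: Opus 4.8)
### Proof proposal

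The plan is to reduce everything to the free abelian case and then invoke the known result of Einsiedler--Kapranov--Lind, combined with the Fundamental Theorem of tropical geometry (\cref{thm fundamental trop}), which is precisely the tool that identifies the Bieri--Groves valuation-theoretic set with the tropical variety. So first I would treat the case $H = \Z^n$ free. Here, by definition, an element $\chi \in \Delta^v_I(H)$ is exactly a character $\chi \colon H \to \R$ extending to a valuation $w$ on $\bbmk H / I$ with $w|_\bbmk = v$. The content of \cite[Corollary 2.2.6]{EKL} (see also the discussion around \cref{thm fundamental trop}) is that the set of such $\chi$ is $\Trop_{\bbmk,v}(I)$ when the valuation $v$ is nontrivial; for possibly trivial $v$ one passes to the algebraically closed extension $\overline{\bbmk}$ with nontrivial valuation as in \cref{def tropical variety}(iii), uses that $\Trop_{\bbmk,v}(I) = \Trop_{\overline{\bbmk},v}(I \otimes_\bbmk \overline{\bbmk})$ (the fundamental theorem says the tropical variety is insensitive to the choice of algebraically closed extension), and checks that a valuation on $\bbmk H/I$ extending $v$ and inducing $\chi$ corresponds to a valuation on $\overline{\bbmk} H/(I\otimes\overline{\bbmk})$ extending $v$ and inducing $\chi$ — this direction is the place where a little care is needed, since one must extend a valuation along the ring map $\bbmk H/I \to \overline{\bbmk} H/(I\otimes \overline{\bbmk})$, but since $\overline{\bbmk}$ is an extension of $\bbmk$ this is standard valuation theory (valuations always extend to extension rings, and one can arrange the value group and the induced character to match because the extra coordinate directions are unconstrained).

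Next I would handle the reduction from a general finitely generated abelian $H$ to the free case, following exactly the setup of \cref{def torsion}. Write $H \cong \Z^n \oplus \Z/d_1\Z \oplus \cdots \oplus \Z/d_m\Z$, let $\psi \colon \Z^{n+m} \twoheadrightarrow H$ be the epimorphism, and let $\tilde I \subseteq \bbmk \Z^{n+m}$ be the unique ideal containing $K = (y_1^{d_1}-1,\dots,y_m^{d_m}-1)$ with $\bbmk\Z^{n+m}/\tilde I \cong \bbmk H/I$. By the free case applied to $\tilde I$, we have $\Delta^v_{\tilde I}(\Z^{n+m}) = \Trop_{\bbmk,v}(\tilde I)$. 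Now I claim $\psi^*$ identifies $\Delta^v_I(H)$ with $\Delta^v_{\tilde I}(\Z^{n+m})$: indeed, a valuation $w$ on $\bbmk H/I$ with $w|_\bbmk = v$ is the same as a valuation on $\bbmk\Z^{n+m}/\tilde I$ with $w|_\bbmk = v$ (same ring), and the character it induces on $\Z^{n+m}$ must kill each $y_i$ (since $d_i \cdot w(y_i) = w(y_i^{d_i}) = w(1) = 0$ forces $w(y_i)=0$, as observed in \cref{def torsion}), hence factors through $\psi$ to give a character on $H$; conversely a character on $H$ pulls back via $\psi^*$. So $\Delta^v_I(H) = (\psi^*)^{-1}\Delta^v_{\tilde I}(\Z^{n+m}) = (\psi^*)^{-1}\Trop_{\bbmk,v}(\tilde I) = \Trop_{\bbmk,v}(I)$, the last equality being the definition in \cref{def torsion}. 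This closes the argument.

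The main obstacle I anticipate is the trivially-valued case in the free-abelian step: making sure that the passage to $\overline{\bbmk}$ does not change the set of characters realizable by extending valuations. Concretely, given $\chi \in \Delta^v_I(\Z^n)$ with $v$ trivial — so $w$ restricts trivially to $\bbmk$ but $w(x_i) = \chi_i$ can be arbitrary reals — one needs to produce a valuation $w'$ on $\overline{\bbmk} H / (I\otimes\overline{\bbmk})$ extending the nontrivial $v$ on $\overline{\bbmk}$ and still inducing $\chi$; conversely, restricting such a $w'$ back to $\bbmk H/I$ should land in $\Delta^v_I$. The cleanest route is not to argue this by hand but to cite that $\Delta^v_I(\Z^n)$ for trivial $v$ already coincides with the tropical variety by EKL's original statement (their Corollary 2.2.6 is stated for an arbitrary valuation, including trivial), so the only genuine input is EKL plus the definitional bookkeeping above; alternatively one invokes \cref{thm fundamental trop}(iii) together with the standard fact that valuations on a domain correspond to points of its analytification / to valuation rings dominating localizations, which is exactly what \cite[Corollary 2.2.6]{EKL} packages. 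I would phrase the proof to lean on EKL for the substantive point and supply only the torsion reduction in detail.
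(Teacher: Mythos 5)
Your proof is correct and follows essentially the same route as the paper's: cite \cite[Corollary 2.2.6]{EKL} for the free abelian case (which, as you eventually conclude, already covers trivial valuations, making the detour through $\overline{\bbmk}$ unnecessary), and then reduce the torsion case to the free case via the ring isomorphism $\bbmk\Z^{n+m}/\tilde I \cong \bbmk H/I$ from \cref{def torsion}, observing that any extending valuation must kill the torsion coordinates $y_i$ and hence induces a character factoring through $\psi$. The paper's proof is the streamlined version of exactly this bookkeeping.
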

\begin{proof}
If $H$ is free abelian, the claim is proved in \cite[Corollary 2.2.6]{EKL}. Now we assume that $H$ has a non-trivial torsion part and we use the notations as in \cref{def torsion}.
Then the claim holds if we have the following equalities:
$$ \Delta^v_I(H)=(\psi^*)^{-1}\Delta^v_{\tilde{I}}(\Z^{n+m})=  (\psi^*)^{-1}\Trop_{\bbmk,v}(\tilde{I})=\Trop_{\bbmk,v}(I). $$
The second equality follows from the free abelian case, and the last one follows from \cref{def torsion}.
So we are left to prove the first equality. Since $\bbmk \Z^{n+m}/\tilde{I}\cong \bbmk H/I ,$
for any valuation  $w \colon \bbmk H/I \to \mathbb{R}_{\infty} $  with $w|_{\bbmk} = v$ and $w|_H = \chi $, it is also a valuation on $\bbmk \Z^{n+m}/\tilde{I}$ such that $w|_{\bbmk} = v$. Since $\tilde{I}\supseteq K$, it is easy to see that  $w|_{\Z^{n+m}} \in \psi^*(\mathrm{Hom}(H;\R))$. In particular,  $w|_{\Z^{n+m}} =\psi^* \chi$. This gives a one to one correspondence between  $\Delta^v_I(H)$ and $(\psi^*)^{-1}\Delta^v_{\tilde{I}}(\Z^{n+m})$, which implies the first equality. %Then the claim follows.
 \end{proof}

\subsection{Tropical variety over rings} \label{sec: trop over ring}
In this subsection, we follow \cite[Section 1.6]{MacStu} to define the tropical variety over a commutative Noetherian ring $R$. Now $H$ is a finitely generated abelian group, not necessarily free.

%The following is a special case of \cref{def tropical variety}(ii) with trivial valuation and we include the notation here for its simplicity. 
Fix a character $\chi \in \mathrm{Hom}(H ;\R)$. %and consider the extension to $RH$ of $\chi$ on $H$ and the \textit{trivial} valuation on $R$, which we still denote as $\chi$. 
For any nonzero $f=\sum a_h h \in R H$, we denote by $\deg_\chi(f)$ the minimal value of $\chi(h)$ with $a_h\neq 0$, and call it the $\chi$-degree of $f$. 
%Here we take the trivial valuation on the coefficients $a_h$.
The initial form $\mathrm{in}_\chi(f)$ is the sum of all terms $a_h h$ in $f$ such that $\chi(h)=\deg_\chi(f)$. For an ideal $I \subseteq RH$, its initial ideal is defined as
$$\mathrm{in}_\chi (I)\coloneqq \langle\mathrm{in}_\chi(f)\mid f\in I\rangle.$$  
Following  \cite[Section 1.6]{MacStu}, we define the tropical variety over $R$ below. %(when $R=\Z$ and $H$ is free abelian).

\begin{definition}[Tropical variety over a ring]\label{def tropical ring}
The {\it tropical variety over $R$} of an ideal $I\subseteq RH $ is the following set 
$$\Trop_R(I)\coloneqq\{\chi \in \mathrm{Hom}(H ;\R) \mid \mathrm{in}_\chi (I)\neq R H \}. $$
\end{definition}

Note that for the zero character $\mathrm{in}_{0} I=I$.
Hence $0\in \Trop_R(I)$ if and only if $I$ is a proper ideal in $RH$, which implies that
$\Trop_R (I)=\emptyset$ if and only if $I=RH$. 
From now on we always assume that $I$ is a \textit{proper} ideal. Moreover, by definition if $\chi\in\Trop_R (I)$, then $r\cdot\chi\in\Trop_R (I)$ for any positive real number $r$. Therefore, $\mathrm{S}(\Trop_R(I))$ shares the same information as $\Trop_R(I)$. 
The following result shows that the work of Bieri, Groves and Strebel can be reinterpreted by the tropical variety.  

\begin{proposition}\label{prop trop equals Sigma complement}
With the above assumptions and notations,
we have
$$\mathrm{S}\big(\Trop_R(I)\big) =\Sigma^c(RH/I).$$
Moreover, if $\sqrt{I}=\bigcap\limits_{j=1}^q \IP_j$, where $\{\IP_j\}_{j=1}^q$ are all minimal prime ideals containing $I$,
then 
$$\Trop_R(I)=\bigcup_{j=1}^q \Trop_R(\IP _j).$$  
In particular, $\Trop_R(I)$ only depends on the radical ideal $\sqrt{I}$.
\end{proposition}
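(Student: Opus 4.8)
The plan is to deduce \cref{prop trop equals Sigma complement} from the chain of results already assembled: \cref{thm BS1}, \cref{BS 2}, \cref{thm:BG_main_theorem}, \cref{prop EKL}, and \cref{example:nonnegative valuations}. The main point is to identify the tropical variety over the ring $R$ (\cref{def tropical ring}) with a union of tropical varieties over valued fields/rings indexed by the non-negative valuations on $R$.

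\medskip

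\textbf{Step 1: Reduce to the valuation description of $\Sigma^c$.} By \cref{thm:BG_main_theorem}, we have
$$\Sigma^c(RH/I)=\bigcup_{v(R)\geqslant 0}\mathrm{S}\big(\Delta_I^v(H)\big),$$
the union over non-negative valuations $v$ of $R$. So it suffices to prove that
$$\Trop_R(I)=\bigcup_{v(R)\geqslant 0}\Delta_I^v(H)$$
as subsets of $\mathrm{Hom}(H;\R)$, since passing to $\mathrm{S}(-)$ is then immediate. First I would treat the case $H=\Z^n$ free; the torsion case then follows by the same quotient-by-$K$ trick used in \cref{def torsion} and in the proof of \cref{prop EKL}, reducing to $\Z^{n+m}$ and intersecting with $\psi^*(\mathrm{Hom}(H;\R))$.

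\medskip

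\textbf{Step 2: The inclusion $\bigcup_v \Delta_I^v(H)\subseteq\Trop_R(I)$.} Fix $\chi\in\Delta_I^v(H)$, so there is a valuation $w\colon RH/I\to\R_\infty$ with $(w\circ\kappa)|_R=v\geqslant 0$ and $(w\circ\kappa)|_H=\chi$. I claim $\mathrm{in}_\chi(I)\neq RH$. Suppose for contradiction $1\in\mathrm{in}_\chi(I)$, so $1=\sum g_i\,\mathrm{in}_\chi(f_i)$ for some $f_i\in I$, $g_i\in RH$. Applying $w\circ\kappa$ and using $w(\mathrm{in}_\chi(f_i))\geqslant\deg_\chi(f_i)$ together with $w(f_i)=w(0)=\infty$ in $RH/I$ (hence $\deg_\chi f_i$ can only be attained by terms that the valuation "sees"), one derives $w(1)\geqslant 0$ is consistent but the cancellation forcing $1$ out of the ideal gives a contradiction; more carefully, I would argue directly on the associated graded: the $\chi$-grading on $RH$ makes $w$ compatible with $\mathrm{in}_\chi$ in the sense that $w$ descends to a valuation on $\mathrm{gr}_\chi(RH)/\mathrm{in}_\chi(I)$, which is therefore nonzero. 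This part mirrors one half of \cite[Corollary 2.2.6]{EKL}; I expect it to be essentially formal once the grading bookkeeping is set up.

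\medskip

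\textbf{Step 3: The inclusion $\Trop_R(I)\subseteq\bigcup_v\Delta_I^v(H)$, and conclusion.} This is the harder direction and the \textbf{main obstacle}: given $\chi$ with $\mathrm{in}_\chi(I)\neq RH$, one must \emph{produce} a non-negative valuation $w$ on $RH/I$ extending some non-negative $v$ on $R$ and restricting to $\chi$ on $H$. When $R=\bbmk$ is a field, \cref{example:nonnegative valuations}(a) says $v=v_0$ is trivial and this is exactly \cref{prop EKL} (via \cite[Corollary 2.2.6]{EKL}). When $R=\Z$, \cref{example:nonnegative valuations}(b) enumerates all non-negative valuations ($v_0$, $v_p$, $\hat v_p$), and one reduces: $\mathrm{in}_\chi(I)\neq \Z H$ means $\mathrm{in}_\chi(I)$ is contained in some maximal ideal, which lies over either $(0)$ or $(p)$ in $\Z$; in the former case one works over $\Q H$ (or $\Z_{(p)}H$) and invokes the field case applied to a suitable residue field with the induced valuation, in the latter over $\Fp H$. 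More intrinsically, for general Noetherian $R$ I would follow \cite[Section 1.6]{MacStu}: $\mathrm{in}_\chi(I)\subsetneq RH$ gives a point of $\mathrm{Spec}(RH/I)$ at which the $\chi$-initial degeneration is nonempty, and Valuation-theoretic lifting (the Krull existence theorem for valuations dominating a given local ring, combined with extending the $\chi$-adic valuation on $RH$) supplies the desired $w$; its non-negativity on $R$ is automatic from the construction. Finally, the \emph{Moreover} statement follows by combining $\mathrm{S}(\Trop_R(I))=\Sigma^c(RH/I)$ with \cref{BS 2}: $\Sigma^c(RH/I)=\bigcup_{j}\Sigma^c(RH/\IP_j)$, hence $\mathrm{S}(\Trop_R(I))=\bigcup_j\mathrm{S}(\Trop_R(\IP_j))$, and since each $\Trop_R(\IP_j)$ is a cone and $\mathrm{S}(-)$ is injective on cones, we get $\Trop_R(I)=\bigcup_j\Trop_R(\IP_j)$ and in particular dependence only on $\sqrt I=\bigcap_j\IP_j$.
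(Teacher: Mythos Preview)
Your route is quite different from the paper's and substantially harder. The paper does \emph{not} go through \cref{thm:BG_main_theorem} or valuations at all; it uses \cref{thm BS1} directly. That theorem says $\chi\in\Sigma^c(RH/I)$ iff $\mathscr{S}_\chi\cap I=\emptyset$, so the whole first claim reduces to the purely algebraic equivalence
\[
\mathscr{S}_\chi\cap I\neq\emptyset \iff \mathrm{in}_\chi(I)=RH.
\]
One direction is immediate: $f\in\mathscr{S}_\chi$ means $\mathrm{in}_\chi(f)=1$. For the converse, write $1=\sum_j g_j\,\mathrm{in}_\chi(f_j)$ with $f_j\in I$, extract from each $g_j$ its $\chi$-homogeneous component $g_j'$ of degree $-\deg_\chi(f_j)$ (so that still $1=\sum_j g_j'\,\mathrm{in}_\chi(f_j)$), and set $f:=\sum_j g_j' f_j\in I$; then $\mathrm{in}_\chi(f)=1$, i.e.\ $f\in\mathscr{S}_\chi\cap I$. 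This works for any commutative Noetherian $R$, and the ``Moreover'' part then follows from \cref{BS 2} exactly as you say.

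By contrast, your Step~3 is a genuine gap. For general Noetherian $R$ you invoke ``valuation-theoretic lifting'' and claim non-negativity on $R$ is automatic, but neither is justified; you are essentially trying to reprove \cref{thm:BG_main_theorem} (or its harder half) rather than use it. Even for $R=\Z$ your sketch amounts to the content of \cref{prop three trop union Z}, which in the paper is proved \emph{after} and \emph{using} \cref{prop trop equals Sigma complement}, so you are close to circular. Your Step~2 is also not cleanly argued: the ``cancellation'' and ``associated graded'' remarks are vague, and the clean way to finish it is precisely the homogeneous-component construction above (once you have $f\in\mathscr{S}_\chi\cap I$, any valuation $w$ on $RH/I$ with $w|_H=\chi$ and $w|_R\geqslant 0$ would give $0=w(\kappa(1))=w(\kappa(f))=\infty$, a contradiction). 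In short, the identity $\Trop_R(I)=\bigcup_{v}\Delta_I^v(H)$ that you set as your target is stronger than needed and harder to prove than the proposition itself; bypass it entirely via \cref{thm BS1}.
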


\begin{proof}
By \cref{thm BS1}, $\chi \in \Sigma^c(RH/I)$ if and only if $\mathscr{S}_{\chi}\cap I=\emptyset$ with notation in \cref{eq multiply subset}. By \cref{def tropical ring}, it is clear that $\chi\in \mathrm{S}(\Trop_R(I))$ if and only if $\mathrm{in}_\chi(I)\neq RH$.
%does not contain any monomial with coefficient a unit in $R$, since these monomials are exactly the units in $RH$.
Thus we only need to show that $\mathscr{S}_{\chi}\cap I\neq \emptyset$ if and only if $\mathrm{in}_\chi(I)= R H$.

One direction is clear. If $f\in \mathscr{S}_{\chi}\cap I$, then $\mathrm{in}_{\chi}(f)=1$ and $\mathrm{in}_{\chi}(I)=R H$. Conversely, if $\mathrm{in}_\chi(I) = R H $, then there exist $f_1,\cdots,f_k\in I$ and $g_1, \cdots, g_k\in R H$ such that 
$$1=\sum_{j=1}^k \mathrm{in}_\chi(f_j) \cdot g_j.$$
Let  $g'_j$ denote the $\chi$-homogeneous terms of $g_j$ with $\chi$-degree being  $-\deg_\chi(f_j).$ 
Then we have $$ 1=\sum_{j=1}^k \mathrm{in}_\chi(f_j) \cdot g'_j$$
Set $f=\sum_{j=1}^k f_j\cdot g'_j$. It is clear that $f\in I$ and $\mathrm{in}_\chi(f)=1 $, hence  $f\in \mathscr{S}_{\chi}\cap I$.  Then the first claim follows. 
The moreover part is a direct consequence of \cref{BS 2}.
\end{proof}
%Collecting the terms with $\chi$-degree being $0$ on both sides, we have
%$$ 1=\sum_{j=1}^k \mathrm{in}_\chi(f_j) \cdot g'_j$$
%for some $\chi$-homogeneous polynomial $g_j'$ such that $\deg_\chi(f_j)=-\deg_\chi(g'_j)$. Set $f=\sum_{j=1}^k f_j\cdot g'_j\in I$. It is clear that $\mathrm{in}_\chi(f)=1$ and $f\in \mathscr{S}_{\chi}\cap I$. Then the first claim follows. 
%The last statement is a direct consequence of \cref{BS 2}.

When $R$ is a field $\bbmk$, the tropical variety $\Trop_\bbmk(I)$ defined in \cref{def tropical ring} is indeed  $\Trop_{\bbmk, v_0}(I)$ with the trivial valuation $v_0$ on $\bbmk$ defined in \cref{def tropical variety}. %In fact, by \cref{thm fundamental trop} and \cref{prop trop equals Sigma complement}, we only need to focus on $I$ being a prime ideal.
The claim follows from \cref{example:nonnegative valuations}(a) and \cref{prop EKL}.
%The trivial valuation is understood if the valuation is not mentioned in the notation.

When $R=\Z$, the tropical variety over $\Z$ can be understood using the tropical varieties over various fields with valuations in \cref{example:nonnegative valuations}(b). We summarize this in the next proposition and its proof follows the idea in \cite[section 2.1 \& 2.2]{BG84}

\begin{proposition}\label{prop three trop union Z}
Let $I$ be an ideal in $\Z H$. Then we have
$$\mathrm{S}\big(\Trop_\Z(I)\big) = \mathrm{S}\Big(\Trop_{\Q,v_0}(I\otimes_\Z \Q)\cup\bigcup\limits_{\substack{p \text{ prime} \\ \text{finitely many}}}\big(\mathrm{Trop}_{\Q,v_p}(I\otimes_\Z \Q)\cup \mathrm{Trop}_{\bF_p,\hat{v}_p}(I\otimes_\Z \Fp)\big)
\Big),$$
where in the union we are taking the trivial valuation $v_0$ on $\Q$, the $p$-adic valuation $v_p$ on $\Q$ and  the trivial valuation $\hat{v}_p$ on $\bF_p$, respectively. %Moreover, if $I$ is a prime ideal with height $n-d$, we have that $$d-1\leqslant \dim S\big(\Trop_\Z(I)\big)\leqslant d .$$
\end{proposition}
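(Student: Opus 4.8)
The plan is to thread together the results of the two preceding subsections; the only genuinely new work is a comparison of the Bieri--Groves--Strebel $\Delta$-sets attached to $\Z$ with tropical varieties over the fields $\Q$ and $\bF_p$. First I would invoke \cref{prop trop equals Sigma complement} with $R=\Z$ to replace the left-hand side by $\Sigma^c(\Z H/I)$, and then apply \cref{thm:BG_main_theorem} together with the complete list of valuations on $\Z$ recorded in \cref{example:nonnegative valuations}(b). This rewrites $\Sigma^c(\Z H/I)$ as
$$\mathrm{S}\big(\Delta^{v_0}_I(H)\big)\cup\bigcup_{p\text{ prime}}\Big(\mathrm{S}\big(\Delta^{v_p}_I(H)\big)\cup\mathrm{S}\big(\Delta^{\hat{v}_p}_I(H)\big)\Big),$$
the union over $p$ being finite by \cite[Theorem 8.2]{BG84}. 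Using $\mathrm{S}(A\cup B)=\mathrm{S}(A)\cup\mathrm{S}(B)$ and \cref{prop EKL} (which reads each target tropical variety as a $\Delta$-set over the relevant valued field), the proposition reduces to the three identities
$$\Delta^{v_0}_I(H)=\Trop_{\Q,v_0}(I\otimes_\Z\Q),\quad \Delta^{v_p}_I(H)=\Trop_{\Q,v_p}(I\otimes_\Z\Q),\quad \Delta^{\hat{v}_p}_I(H)=\Trop_{\bF_p,\hat{v}_p}(I\otimes_\Z\bF_p).$$

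To prove each identity I would compare valuations on $\Z H/I$ with valuations on the base-changed ring. One containment is immediate: restricting a valuation on $\Q H/(I\otimes_\Z\Q)$, or on $\bF_p H/(I\otimes_\Z\bF_p)$, along the canonical ring map out of $\Z H/I$ produces a valuation still lying over $v_0$, $v_p$, or $\hat{v}_p$ on $\Z$ and inducing the same character of $H$. For the other containment, start from a valuation $w\colon\Z H/I\to\R_\infty$ with $w|_\Z=v$ and $w|_H=\chi$. The crucial observation is that the support $w^{-1}(\infty)$ always contains the kernel of the pertinent base change: for $v\in\{v_0,v_p\}$ this kernel is the $\Z$-torsion submodule $T$ of $\Z H/I$, and $nt=0$ with $t\in T$ forces $w(t)=\infty$ since $w(n)=v(n)$ is finite; for $v=\hat{v}_p$ the kernel is $p\cdot(\Z H/I)$, and $w(p)=\hat{v}_p(p)=\infty$. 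Hence $w$ descends: in the first two cases to $(\Z H/I)/T$, which embeds into its localization $\Q H/(I\otimes_\Z\Q)$ at $\Z\setminus\{0\}$, where $w$ extends via $w(a/s)\coloneqq w(a)-v(s)$ — a routine check shows this is a ring valuation restricting to $v_0$ or $v_p$ on $\Q$ and still inducing $\chi$ — while in the third case the descended valuation restricts to the trivial valuation on $\bF_p$. Either way $\chi$ lands in the $\Delta$-set of the base-changed ideal over the appropriate field. The degenerate case in which a base change of $I$ becomes the whole ring costs one extra line: $1\in I\otimes_\Z\Q$ (resp. $1\in I\otimes_\Z\bF_p$) forces a nonzero integer into $I$ (resp. some $1-pm$ into $I$), incompatible with $w|_\Z=v$, so the corresponding $\Delta$-set over $\Z$ is also empty, matching the conventions after \cref{def tropical ring}.

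Finally I would substitute the three identities back into the formula for $\Sigma^c(\Z H/I)$ and distribute $\mathrm{S}(-)$ through the finite union, reproducing the stated equality verbatim. I expect the main obstacle to be the second containment in the middle paragraph: one must check that the support of every valuation on $\Z H/I$ swallows the kernel of the base change, and that the descended valuation really does extend to the relevant localization as a ring valuation carrying the same character. The remaining ingredients — \cref{prop trop equals Sigma complement}, \cref{thm:BG_main_theorem}, \cref{prop EKL}, and the finiteness of the set of contributing primes from \cite[Theorem 8.2]{BG84} — are already available and only need to be arranged in the right order.
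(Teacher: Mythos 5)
Your proposal is correct and follows essentially the same route as the paper: reduce via \cref{prop trop equals Sigma complement}, \cref{thm:BG_main_theorem}, and \cref{example:nonnegative valuations}(b) to the three identities $\Delta^{v}_I(H)=\Trop_{\bbmk,v}(I\otimes_\Z\bbmk)$, then for each identity descend a valuation on $\Z H/I$ through the kernel of the base change (and extend to the localization in the $\Q$ cases), treating the degenerate cases $I\cap\Z\neq(0)$ and $I\otimes_\Z\bF_p=\bF_p H$ separately. The only difference is cosmetic: you make explicit the identification of the kernels (the $\Z$-torsion submodule, respectively $p\cdot(\Z H/I)$) and the well-definedness of the extended valuation, details the paper delegates to a citation of Bieri--Groves' reduction steps.
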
 
\begin{proof}
By \cref{thm:BG_main_theorem}, \cref{example:nonnegative valuations}(b) and \cref{prop trop equals Sigma complement}, we need to show that if $v$ is one of the valuations $v_p$ with $p\geqslant 0$ or $\hat{v}_p$ with $p>0$, then 
\begin{equation}\label{eq: Delta equals trop}
\Delta^{v}_I(H)=\Trop_{\bbmk,v}(I\otimes_\Z\bbmk)    
\end{equation}
for a proper field $\bbmk$.

We first study the trivial and $p$-adic valuations $v_p$ over $\Q$ with $p\geqslant 0$. %In this case, $v_p^{-1}(\infty)=(0) \Q$. 
If $ I\cap \Z\neq (0)$, write $I\cap \Z=(m)$ with $m\neq 0$. Then any valuation $w$ on $\Z H/I$ gives $w(m)=\infty$, while $w|_\Z=v_p$ can not happen since $v_p(m)\neq \infty$. Hence $\Delta^{v_p}_I(H)=\emptyset$. Meanwhile, $m\in I$ implies $ I\otimes_\Z\Q= \Q H$, hence $\mathrm{Trop}_{\Q,v_p}(I\otimes_\Z\Q)=\emptyset$. So \cref{eq: Delta equals trop} holds under this assumption.  
 
Now we assume that  $I\cap \Z=(0)$. In this case, the ring $\Q H/(I\otimes_\Z \Q)$  is non-trivial. Set $S=\Z\backslash\{0\}$, which is a multiplicative subset of $\Z$. Since $v_p^{-1}(\infty)=(0)$ in $ \Z$,  we have $S\cap (0)=\emptyset$. Then there exists a unique valuation, still denoted as $v_p : S^{-1}\Z=\Q\to \R_{\infty}$, given by ${v_p}(\frac{a}{b})=v_p(a)-v_p(b)$, $a\in\Z, b\in S$, i.e. the $p$-adic valuation on $\Q$. Hence any valuation $w$ on $\Z H/I$ with $w|_\Z =v_p$ and $w|_H=\chi $ gives a unique valuation $w'$ on $\Q H/(I\otimes_\Z \Q)$ with $w'|_\Q =v_p$ and $w'|_H=\chi $.  
Therefore, one obtains readily from \cref{prop EKL} that 
$$\Delta_I^{v_p}(H)=\Delta^{{v_p}}_{I\otimes_\Z \Q}(H)=\Trop_{\Q,v_p}(I\otimes_\Z \Q).$$
This is the first reduction step considered by Bieri and Groves in \cite[section 2.2]{BG84}.

Next we study the mod $p$-valuation $\hat{v}_p$ with $p>0$. 
If $I\otimes_\Z \bF_p = \bF_p H $, then there exists $f\in I$ such that $f=1+pf'$. Since $f\in I$, any valuation $w$ on $\Z H/I$ gives $w(f)=\infty$. On the other hand, $\hat{v}_p(p)= \infty$ implies that $w(f)=w(1+pf')=w(1)=0$, a contradiction. Hence  $\Delta^{\hat{v}_p}_I(H)=\emptyset$. Meanwhile, $I\otimes_\Z \bF_p = \bF_p H$ implies  $\mathrm{Trop}_{\bF_p,\hat{v}_p}(I\otimes_\Z\bF_p)=\emptyset$. So \cref{eq: Delta equals trop} holds under this assumption. 

If $I\otimes_\Z \bF_p \neq \bF_p H $, then the ring $\bF_p H/(I\otimes_\Z \bF_p)$ is non-trivial. Fix a valuation $w$ on $\Z H/I$ with $w|_\Z =\hat{v}_p  $ and $w|_H=\chi $. Since $ \hat{v}_p(p)=\infty$, $ w$ can also be viewed as a valuation on $\Z H/\langle p,I \rangle$, where $\langle p,I \rangle$ is the ideal generated by $p$ and $I$. Note that $\Z H/\langle p,I \rangle\cong \bF_p H/(I\otimes_\Z \bF_p)$. Then $w$ can also be viewed as a valuation on $\bF_p H/(I\otimes_\Z \bF_p)$. Clearly $w|_{\bF_p} $ is the trivial valuation and $w|_H=\chi $. From \cref{prop EKL}, we have
$$\Delta_I^{\hat{v}_p}(H)=\Delta^{\hat{v}_p}_{I\otimes_\Z \Fp}(H)=\Trop_{\Fp,\hat{v}_p}(I\otimes_\Z \Fp).$$
This is the third reduction step considered by Bieri and Groves in \cite[section 2.2]{BG84}.
\end{proof}

\begin{remark} \label{rem tropical over Z}
In general, we have
$$ \Trop_\Z(I) \neq \Trop_{\Q,v_0}(I\otimes_{\Z}\Q)\cup\bigcup\limits_{\substack{p \text{ prime} \\ \text{finitely many}}}\big(\mathrm{Trop}_{\Q,v_p}(I\otimes_{\Z}\Q)\cup \mathrm{Trop}_{\bF_p,\hat{v}_p}(I\otimes_{\Z}\Fp)\big),$$ as shown by \cref{example trop comparision} and \cref{fig:total} below. But \cref{prop three trop union Z} shows that they coincide after projection onto the unit sphere. This shows the central role of the $p$-adic tropicalization (if it is distinct from the trivial one). Its asymptotic behavior gives the trivial tropicalization and its local behavior near the origin gives the mod $p$ tropicalization. See \cite[Theorem C1, C2]{BG84} for more details.   
\end{remark}

\begin{example} \label{example trop comparision}
Consider the ideal $I=(x_1+x_2-2) \subseteq \Z[x_1^{\pm 1},x_2^{\pm 1}]$.   See \cref{fig:total} below for its various tropicalizations, where \cref{fig:z-trop} is $\Trop_\Z(I)$ and \cref{fig:trivial-valuation}, \cref{fig:mod2-valuation} and \cref{fig:2adic-valuation} are the tropical varieties of $I$ considered in $ (\Q[x_1^{\pm 1},x_2^{\pm 1}],v_0)$, $(\mathbb{F}_2[x_1^{\pm 1},x_2^{\pm 1}],\hat{v}_2)$ and $(\Q[x_1^{\pm 1},x_2^{\pm 1}], v_2)$, respectively. 
Note that the union of \cref{fig:trivial-valuation,fig:mod2-valuation,fig:2adic-valuation} is different from \cref{fig:z-trop}, while the projections onto the unit sphere are the same (see \cref{fig:all_three}). 
\end{example}

\begin{figure}[ht!]
    \centering

    % First Row
    \begin{subfigure}[b]{0.23\textwidth}
        \centering
        \begin{tikzpicture}[scale=0.4]
            \draw[-,dashed] (-3,0) -- (3,0);  % x-axis
            \draw[-,dashed] (0,-3) -- (0,3);  % y-axis
            \draw[-,blue,thick] (0,0) -- (0,3.2);  % Vertical blue line
            \draw[-,blue,thick] (0,0) -- (-3,-3);  % Diagonal blue line (negative slope)
            \draw[-,blue,thick] (0,0) -- (3.2,0);  % Horizontal blue line
        \end{tikzpicture}
        \caption{The trivial Trop.}
        \label{fig:trivial-valuation}
    \end{subfigure}
    \hfill
    \begin{subfigure}[b]{0.23\textwidth}
        \centering
        \begin{tikzpicture}[scale=0.4]
            \draw[-,dashed] (-3,0) -- (3,0);  % x-axis
            \draw[-,dashed] (0,-3) -- (0,3);  % y-axis
            \draw[-,blue,thick] (3,3) -- (-3,-3);  % Diagonal yellow line
        \end{tikzpicture}
        \caption{The mod $2$ Trop.}
        \label{fig:mod2-valuation}
    \end{subfigure}
    \hfill
    \begin{subfigure}[b]{0.23\textwidth}
        \centering
        \begin{tikzpicture}[scale=0.4]
            \draw[-,dashed] (-3,0) -- (3,0);  % x-axis
            \draw[-,dashed] (0,-3) -- (0,3);  % y-axis
            \draw[-,blue,thick] (1,1) -- (-3,-3);  % Diagonal blue line
            \draw[-,blue,thick] (1,1) -- (1,3);  % Vertical blue line
            \draw[-,blue,thick] (1,1) -- (3,1);  % Horizontal blue line
        \end{tikzpicture}
        \caption{The $2$-adic Trop.}
        \label{fig:2adic-valuation}
    \end{subfigure}
    \hfill
    \begin{subfigure}[b]{0.23\textwidth}
        \centering
        \begin{tikzpicture}[scale=0.4]
            \draw[-,dashed] (-3.2,0) -- (3,0);  % x-axis
            \draw[-,dashed] (0,-3.2) -- (0,3);  % y-axis
            \draw[-,blue,thick] (0,0) -- (0,3.2);  % Vertical blue line
            \draw[-,blue,thick] (0,0) -- (-3,-3);  % Diagonal blue line (negative slope)
            \draw[-,blue,thick] (0,0) -- (3.2,0);  % Horizontal blue line
            \fill[blue!50] (0,0) -- (3.2,0) -- (3.2,3.2) -- (0,3.2) -- cycle;
        \end{tikzpicture}
        \caption{The $\Z$-Trop.}
        \label{fig:z-trop}
    \end{subfigure}

    \vspace{0.3cm} % Space between the rows

    % Second Row
    \begin{subfigure}[b]{0.45\textwidth}
        \centering
        \begin{tikzpicture}[scale=0.4]
            \draw[-,dashed] (-3,0) -- (3,0);
            \draw[-,dashed] (0,-3) -- (0,3);
            \draw (0,0) circle (2);
            \fill[blue] (2,0) circle (4pt);
            \fill[blue] (0,2) circle (4pt);
            \fill[blue] (-1.414213562, -1.414213562) circle (4pt);
            \fill[blue] (1.414213562, 1.414213562) circle (4pt);
        \end{tikzpicture}
        \caption{Projection of \ref{fig:trivial-valuation} and \ref{fig:mod2-valuation}.}
        \label{fig:trivial_and_mod p}
    \end{subfigure}
    \hfill
    \begin{subfigure}[b]{0.45\textwidth}
        \centering
        \begin{tikzpicture}[scale=0.4]
            \draw[-,dashed] (-3,0) -- (3,0);
            \draw[-,dashed] (0,-3) -- (0,3);
            \draw (0,0) circle (2);
            \fill[blue] (2,0) circle (4pt);
            \fill[blue] (0,2) circle (4pt);
            \fill[blue] (-1.414213562, -1.414213562) circle (4pt);
            \draw[blue, thick] 
                (2,0) arc[start angle=0, end angle=90, radius=2];
        \end{tikzpicture}
        \caption{Projection of \ref{fig:z-trop} or the union of \ref{fig:trivial-valuation}, \ref{fig:mod2-valuation}, and \ref{fig:2adic-valuation}.}
        \label{fig:all_three}
    \end{subfigure}
%\captionsetup[figure]{labelformat=simple, labelsep=none, labelfont=bf, justification=centering}
    \caption{Comparison of several tropicalizations.}
    \label{fig:total}
\end{figure}
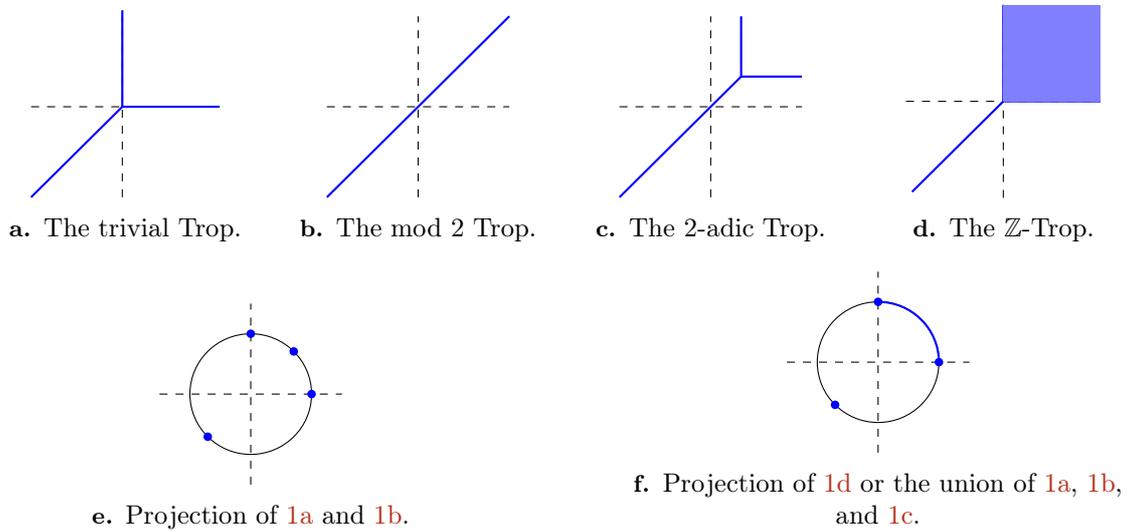

We end this section with a property for tropical varieties.  Let $\psi\colon H \twoheadrightarrow H'$ be an epimorphism of abelian groups with kernel $N$. It induces  an embedding
$$\psi^*\colon \mathrm{Hom}(H'; \R) \hookrightarrow \mathrm{Hom}({H}; \R)$$
and a ring epimorphism $\psi_*\colon 
R{H} \twoheadrightarrow RH'$. It is easy to see that the kernel $K$ of $\psi_*$ is the ideal generated by $\{n-1\mid n\in N\}$ in $R{H}$. 
For an ideal $I'\subsetneq RH'$, %let $J=\psi_*^{-1}(I')$,
%denote the unique ideal in $R\tilde{H}$ containing $K$.
 we have $ R {H}/{\psi_*^{-1}(I')} \cong RH'/I'$. 

%, as shown in the following lemma.

%\begin{lemma}\label{lem: kernel of group ring homomorphism}
%Let $\phi: H\to K$ be a group homomorphism with kernel $N$, and $\phi_*: \Z H\to \Z K$ be the induced ring homomorphism on the group rings. Then we have the kernel of $\phi_*$ is
%$$\langle n-1\mid n\in N\rangle.$$
%\end{lemma}
%\begin{proof}
%It is immediate to see that $\phi_*(n-1)=0$ for any $n\in N$. Conversely, for any $x=\sum\limits_{h\in H}a_h h\in \ker(\phi_*)$, we have $\sum\limits_{h\in H}a_h \phi(h)=0=\sum\limits_{k\in \phi(H)}\Big(\sum\limits_{\{h\mid\phi(h)=k\}}a_h\Big)k$. This means that $\sum\limits_{\{h\mid\phi(h)=k\}}a_h=0$ for any $k\in \phi(H)\subseteq K$.

%Take $h_k\in H$ as a representative element with the property that $\phi(h_k)=k$ for given $k\in \phi(H)\subseteq K$. Then $$x=\sum\limits_{k\in \phi(H)}\Big(\sum\limits_{\{h\mid\phi(h)=k\}}a_h(h-h_k)\Big)=\sum\limits_{k\in \phi(H)}\Big(\sum\limits_{\{h\mid\phi(h)=k\}}(a_hh_k).(h_k^{-1}h-1)\Big)$$ Since $h_k^{-1}h-1\in \langle n-1\mid n\in N\rangle$, we are done.
%\end{proof}

\begin{proposition}\label{prop functor} With the above notations and assumptions, we have 
    $$ \Trop_R({\psi_*^{-1}(I')}) =  \psi^*(\Trop_R(I')).$$
\end{proposition}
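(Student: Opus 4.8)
The plan is to relate the initial ideals on both sides of the asserted equality and use \cref{prop trop equals Sigma complement} to reduce to a statement about $\Sigma$-invariants, or alternatively to argue directly with initial ideals. I would take the direct route. Fix a character $\chi' \in \mathrm{Hom}(H';\R)$ and set $\chi = \psi^*(\chi') = \chi'\circ\psi \in \mathrm{Hom}(H;\R)$. The key observation is that the ring epimorphism $\psi_* \colon RH \twoheadrightarrow RH'$ is \emph{graded}: if we grade $RH$ by the value of $\chi$ on group elements and $RH'$ by the value of $\chi'$, then each generator $n-1$ of the kernel $K$ has $\chi$-degree $\min\{\chi(n),0\} = \min\{\chi'(\psi(n)),0\}$; but $\psi(n)=1$ so $\chi(n)=0$ and hence $n-1$ is $\chi$-homogeneous of degree $0$, with $\mathrm{in}_\chi(n-1) = n-1$. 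It follows that $\mathrm{in}_\chi(K)=K$, and more generally that $\psi_*$ carries $\chi$-homogeneous elements to $\chi'$-homogeneous elements and $\mathrm{in}_\chi$ intertwines with $\mathrm{in}_{\chi'}$ in a controlled way.

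The central step will be to prove the two inclusions
$$\mathrm{in}_\chi\big(\psi_*^{-1}(I')\big) = RH \iff \mathrm{in}_{\chi'}(I') = RH'$$
whenever $\chi = \psi^*(\chi')$, together with the fact that if $\chi$ is \emph{not} in the image of $\psi^*$ then $\mathrm{in}_\chi(\psi_*^{-1}(I')) \ne RH$ automatically. For the first claim: since $\psi_*$ is surjective with kernel $K$ and $\mathrm{in}_\chi(K)=K$, one checks that $\psi_*(\mathrm{in}_\chi(f)) = \mathrm{in}_{\chi'}(\psi_*(f))$ for every $f$ whose image is nonzero, while $f \in K + (\text{lower-degree stuff})$ exactly when $\psi_*(f)$ drops degree — but a generator set argument shows $\psi_*^{-1}(I')$ contains $K$, and modulo $K$ the $\chi$-grading of $RH$ matches the $\chi'$-grading of $RH'$. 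Concretely, $RH/K \cong RH'$ as $\chi$/$\chi'$-graded rings, so $\mathrm{in}_\chi(\psi_*^{-1}(I'))$ contains $K$ and its image in $RH/K \cong RH'$ is exactly $\mathrm{in}_{\chi'}(I')$; hence one is the unit ideal iff the other is. For the second claim, when $\chi \notin \psi^*(\mathrm{Hom}(H';\R))$ there exists $n \in N$ with $\chi(n)\ne 0$; then $n-1 \in K \subseteq \psi_*^{-1}(I')$ has $\mathrm{in}_\chi(n-1)$ equal to a monomial (either $n$ or $-1$), and one must check this monomial-type initial form cannot make the whole initial ideal trivial — here I would use that $\mathrm{in}_\chi(\psi_*^{-1}(I'))$ is generated by $\chi$-homogeneous elements lying in $K$ together with elements mapping into $\mathrm{in}_{\chi'}(I') \subsetneq RH'$, none of which can combine to $1$ since $1$ has $\chi$-degree $0$ and the non-unit monomials $n$, $n^{-1}$ are not zero-divisors. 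This is cleanest to phrase by noting $\psi_*^{-1}(I')$ maps onto a \emph{proper} ideal of $RH'$, so $\mathrm{in}_\chi(\psi_*^{-1}(I'))$ maps onto $\mathrm{in}_{\chi'}(I') \subsetneq RH'$ under $\psi_*$, forcing $\mathrm{in}_\chi(\psi_*^{-1}(I')) \ne RH$.

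Assembling: $\chi \in \Trop_R(\psi_*^{-1}(I'))$ iff $\mathrm{in}_\chi(\psi_*^{-1}(I'))\ne RH$ iff ($\chi \in \psi^*(\mathrm{Hom}(H';\R))$ and, writing $\chi = \psi^*\chi'$, $\mathrm{in}_{\chi'}(I')\ne RH'$) iff $\chi \in \psi^*(\Trop_R(I'))$, which is the claim. I expect the main obstacle to be the bookkeeping in the case $\chi \notin \operatorname{im}\psi^*$: one needs to be careful that an initial ideal containing a unit times a group element $n$ with $\chi(n)>0$ does not already equal $RH$ — but since $n$ is invertible in $RH$ this \emph{would} be a problem if $n$ itself landed in the initial ideal; the point is that $n-1$ is $\chi$-homogeneous of degree $0$ only when $\chi(n)=0$, and when $\chi(n)\ne 0$ its initial form is the monomial $n^{\pm 1}$ (a unit!), so in fact $\mathrm{in}_\chi(\psi_*^{-1}(I')) = RH$ in that case. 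This means the correct statement of the second claim is the \emph{opposite} of my first guess: when $\chi \notin \operatorname{im}\psi^*$ one has $\chi \notin \Trop_R(\psi_*^{-1}(I'))$ precisely because some $n-1 \in K$ has invertible initial form — and this is consistent with $\Trop_R(\psi_*^{-1}(I')) \subseteq \operatorname{im}\psi^*$, exactly as in the torsion modification of \cref{def torsion}. So the real work is just the graded isomorphism $RH/K \cong RH'$ respecting $\chi$ and $\chi'$, after which everything follows formally; I would present it in that order.
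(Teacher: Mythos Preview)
Your argument is correct (once you reach the corrected version of the $\chi \notin \operatorname{im}\psi^*$ case), and it takes a different route from the paper's proof. The paper invokes \cref{prop trop equals Sigma complement} and \cref{thm:BG_main_theorem} to rewrite each tropical variety as a union $\bigcup_{v(R)\geqslant 0}\Delta^v$ of Bieri--Groves valuation sets, and then observes that the ring isomorphism $RH/\psi_*^{-1}(I') \cong RH'/I'$ puts valuations on the two quotients in bijection, with the restriction to $H$ of any such valuation necessarily factoring through $\psi$; this matches the $\Delta$-sets and finishes the proof in one line. Your approach instead stays with the initial-ideal definition: for $\chi = \psi^*\chi'$ you use that $K$ is $\chi$-homogeneous so that $RH/K \cong RH'$ is a graded isomorphism identifying $\mathrm{in}_\chi(\psi_*^{-1}(I'))/K$ with $\mathrm{in}_{\chi'}(I')$, and for $\chi \notin \operatorname{im}\psi^*$ you exhibit $n-1\in K$ with unit initial form. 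The paper's proof is shorter because the valuation machinery is already in place, while yours is more elementary and self-contained, at the cost of the graded-ring bookkeeping you flag. The two arguments are closely related under the hood: your $\chi\notin\operatorname{im}\psi^*$ computation is exactly what justifies the paper's unproven assertion that $\Trop_R(K)=\psi^*(\mathrm{Hom}(H';\R))$, and your graded-quotient step is the same mechanism used in \cref{def torsion}.
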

\begin{proof}
By \cref{prop trop equals Sigma complement} and \cref{thm:BG_main_theorem}, we have \begin{center}
    $ \Trop_R(I')=\bigcup\limits_{v(R)\geqslant 0}\Delta_{I'}^v(H') $
and  $ \Trop_R(\psi_*^{-1}(I'))=\bigcup\limits_{v(R)\geqslant 0}\Delta_{\psi_*^{-1}(I')}^v(H) $.
\end{center}
Note the two facts  $\Trop_R(K)=\psi^*(\mathrm{Hom}(H'; \R))$ and $K\subseteq \psi_*^{-1}(I')$. 
By the ring isomorphism $ R {H}/{\psi_*^{-1}(I')} \cong RH'/I'$, one readily sees that $ \Trop_R({\psi_*^{-1}(I')}) =  \psi^*(\Trop_R(I')).$
%The claim follows by the same proof as for \cref{prop EKL} and the easy fact that $\Trop_R(K)=\psi^*(\mathrm{Hom}(H'; \R))$.
%can be seen from \cref{lem: kernel of group ring homomorphism} using \cref{def tropical variety}(ii).
\end{proof}

\section{Proof of Theorem \ref{thm:compare_two_bnsr}}\label{sec: proof}
In this section, we apply the results in \cref{sec:background,sec:tropical} to prove \cref{thm:compare_two_bnsr}.

\subsection{The BNSR invariants revisited}

We start with a finiteness condition for chain complexes, following the approach of Farber, Geoghegan, and Sch\"utts in \cite{FGS}. 

\begin{definition} \label{def:chain_complex}
Let $C = (C_i,\partial_i)_{i\geqslant 0}$ be a non-negatively graded chain complex over a ring $A$. For each integer  $k\geqslant 0$,  we say $C$ is of finite $k$-type if there is a chain complex $C'$ of finitely
generated projective (left) $A$-modules and a chain
map $C' \to C$ inducing isomorphisms $H_i(C') \to H_i(C)$ for $i < k$ and an epimorphism
$H_k(C') \to H_k(C).$
\end{definition}

 %Let $H=H_1(X; \Z)$ be the abelianization of $G$. Then the Hurewicz map $G\twoheadrightarrow H$ induces an isomorphism $\mathrm{Hom}(H ;\R)\cong \mathrm{Hom}(G;\R)$, hence $\mathrm{S}(G)=\mathrm{S}(H)$.
 
Let $X$ be a connected finite CW-complex with the fundamental group $G$.
Denote $\widetilde{X}$ the universal covering of $X$. The cell structure of $X$ lifts to cell structures on the universal cover $\widetilde{X}$ with $G$-action via deck transformations. 
Thus, the cellular chain complex $C_*(\widetilde{X};\Z)$ is a complex of finitely generated free $\Z G$-modules.
Given any nonzero $\chi\in \mathrm{Hom}(G ;\R)$, the set $G_{\chi}=\{g\in G\mid \chi(g)\geqslant 0\}$ 
is a submonoid of $G$, which depends only on $[\chi]\in \mathrm{S}(G)$. Then $C_*(\widetilde{X};\Z)$ can also be viewed as a complex of $\Z G_\chi$-modules. The following definition of the BNSR invariant of $X$ can be found in \cite{FGS}.

\begin{definition}\label{def BNSR}
    For each integer $k\geqslant 0$, the $k$-th BNSR invariant of $X$ is given by
    $$\Sigma^k(X; \Z) \coloneqq\{\chi\in \mathrm{S}(G)\mid  C_*(\widetilde{X};\Z) \text{ is of finite $k$-type over } \Z G_\chi \}. $$
\end{definition}

%\begin{remark}
%The notations in \cref{sec:background} defined for a single module is the special case $k=1$ and we sometimes omit the index $1$ when it makes no confusion.
%\end{remark}

We denote by $\Sigma^k(X; \Z)^c $ the complement of $\Sigma^k(X; \Z)$ in $\mathrm{S}(G)$. It is shown in \cite{FGS} that $\Sigma^k(X; \Z)$ is an open subset of $\mathrm{S}(G)$  and depends only on the homotopy type of $X$. 
In particular, $\Sigma^1(X; \Z)$ depends only on  $G$, hence one can also denote it by $\Sigma^1(G; \Z)$. This is (almost) the BNS invariant of $G$, which can be defined via the Cayley graph as follows. 
One picks a finite generating set of $G$ and let $\Gamma(G)$ be the
corresponding Cayley graph of $G$. 
%The invariants $\Sigma^1(G)$ defined by Bieri, Neumann, and Strebel can be defined using the Cayley graph.
For any $\chi \in \mathrm{S}(G)$, let $\Gamma_\chi(G)$ be the full subgraph on the vertex set $G_\chi$. %Bieri, Neumann, and Strebel defined \cite{BNS} a subset $\Sigma^1(X)$ of $H^1(X, \R) \setminus \{0\}$, nowadays called the BNS-invariant of $X$ as follows, see e.g. \cite[Definition 7.1]{PS10}.
\begin{definition}[\cite{BNS}]
Let $G$ be a finitely generated group. The BNS invariant $\Sigma^1(G)$ consists of  $\chi \in \mathrm{S}(G)$ 
for which the graph $\Gamma_\chi(G)$ is connected.
\end{definition}

As noted by Bieri and Renz in \cite[Section 1.3]{BR88},  $\Sigma^1(G)=-\Sigma^1(G; \Z)$. 
In particular, $\Sigma^1(G)$ does not depend on the choice of finite generating set for $G$. To make the notations consistent, we always use $\Sigma^1(G;\Z)$ instead of $\Sigma^1(G)$, and all the conclusions originally about $\Sigma^1(G)$ will be rewritten with respect to $\Sigma^1(G;\Z)$. %Historically speaking, the BNS invariant was first introduced by Bieri, Neumann and Stebrel \cite{BNS} and was later generalized to higher degrees by Bieri  and Renz \cite{BR88} and from groups to spaces by Farber, Geoghegan and Sch\"utts in \cite{FGS}. 
 The complements of the BNS invariants enjoy the following naturality property. 
 \begin{proposition}[Proposition 3.3, \cite{BNS}] \label{prop naturality}
 Let $\psi\colon G\twoheadrightarrow Q$ be a surjective group homomorphism between finitely generated groups. Then the induced embedding $\psi^* \colon S(Q) \hookrightarrow \mathrm{S}(G)$, restricts to an injective map $ \psi^* \colon \Sigma^1(Q;\Z)^c \hookrightarrow \Sigma^1(G;\Z)^c$.
 \end{proposition}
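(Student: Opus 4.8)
The plan is to reduce the statement to the combinatorial description of the BNS invariant via Cayley graphs recalled just above, since that is the cleanest formulation in degree $1$. First I would dispose of injectivity: because $\psi$ is surjective, the pullback $\psi^*\colon\Hom(Q;\R)\to\Hom(G;\R)$ is injective, hence so is the induced map $\psi^*\colon\mathrm{S}(Q)\hookrightarrow\mathrm{S}(G)$, and the restriction of an injective map to any subset remains injective. So the only real content is that $\psi^*$ maps $\Sigma^1(Q;\Z)^c$ into $\Sigma^1(G;\Z)^c$. Using the Bieri--Renz identity $\Sigma^1(-;\Z)=-\Sigma^1(-)$ together with the obvious fact that $\psi^*$ commutes with the antipodal involution $\chi\mapsto-\chi$ on character spheres, this is equivalent to showing: if $\chi\in\mathrm{S}(Q)$ and $\Gamma_\chi(Q)$ is disconnected, then $\Gamma_{\psi^*\chi}(G)$ is disconnected.

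Next I would arrange compatible generating sets. Choose any finite generating set $S$ of $G$; then $T\coloneqq\psi(S)$ (discarding the identity and repetitions if they occur) is a finite generating set of $Q$, and since $\Sigma^1$ does not depend on the chosen generating set we may compute $\Gamma_{\psi^*\chi}(G)$ with respect to $S$ and $\Gamma_\chi(Q)$ with respect to $T$. The key observation is that $\psi$ induces a graph morphism on the relevant subgraphs. Indeed, $\psi$ maps $G_{\psi^*\chi}=\{g\mid\chi(\psi(g))\geqslant 0\}$ onto $Q_\chi=\{q\mid\chi(q)\geqslant 0\}$ by surjectivity of $\psi$, and if $g$ and a neighbour $gs$ of $g$ in $\Gamma(G)$ (with $s\in S^{\pm 1}$) both lie in $G_{\psi^*\chi}$, then $\psi(g)$ and $\psi(g)\psi(s)$ both lie in $Q_\chi$ with $\psi(s)\in T^{\pm 1}$, so they are equal or joined by an edge in $\Gamma_\chi(Q)$. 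Hence $\psi$ restricts to a graph morphism $\Gamma_{\psi^*\chi}(G)\to\Gamma_\chi(Q)$ that is surjective on vertices.

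The conclusion is then a soft graph-theoretic argument. If $\Gamma_{\psi^*\chi}(G)$ were connected, its image under this morphism would be a connected subgraph of $\Gamma_\chi(Q)$ containing every vertex of $\Gamma_\chi(Q)$ (the image of a connected graph under a graph morphism is connected, since a walk maps to a walk, and it is spanning by the surjectivity just noted). A graph possessing a connected spanning subgraph is itself connected, so $\Gamma_\chi(Q)$ would be connected, i.e.\ $\chi\in\Sigma^1(Q)$, contradicting $\chi\in\Sigma^1(Q)^c$. Therefore $\psi^*\chi\in\Sigma^1(G)^c$, and translating back through the antipode gives $\psi^*\bigl(\Sigma^1(Q;\Z)^c\bigr)\subseteq\Sigma^1(G;\Z)^c$, as claimed.

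I do not expect a genuine obstacle here: the argument is elementary once one works with the Cayley-graph picture. The only points requiring a little care are the bookkeeping with the sign convention relating $\Sigma^1(-)$ and $\Sigma^1(-;\Z)$ (so that one does not accidentally prove the statement for the wrong half-space), and the two routine graph facts used in the last step. In particular, the possible non-finite-generation of $\ker\psi$ plays no role, because we only ever push a fixed finite generating set of $G$ forward to $Q$ and never need to generate the kernel.
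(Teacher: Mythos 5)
Your proof is correct, and it follows the standard Cayley-graph argument that is the content of the cited Proposition~3.3 of Bieri--Neumann--Strebel; the paper itself does not reprove the statement but only quotes it. The two points you flag as needing care (the sign convention $\Sigma^1(-)=-\Sigma^1(-;\Z)$, which is harmless since $\psi^*$ commutes with the antipode, and the possible collapse of edges when $\psi(s)=1$, which turns paths into walks but still connects endpoints) are handled correctly, and the observation that finite generation of $\ker\psi$ is irrelevant is accurate.
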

%See \cite[Lemma 5.3]{Suciu21} for a higher degree version of this proposition. 
Bieri, Neumann, and Strebel showed that the BNS invariants are important in controlling the finiteness properties of kernels of abelian quotients. 

%\begin{theorem} \label{thm BNS} \cite[Theorem B1]{BNS} Let $G$ be a finitely generated group with a normal subgroup $N$ such that $G/N$ is abelian. The quotient map $ G \twoheadrightarrow G/N $ induces an embedding  $ S(G/N) \hookrightarrow \mathrm{S}(G)$, and $N$ is finitely generated if and only if $ S(G/N)  \subseteq \Sigma^1(G)$.
%\end{theorem}

\begin{theorem}[Theorem B1,\cite{BNS}]\label{thm BNS}
Let $G$ be a finitely generated group and let $N$ be a normal subgroup of $G$ with an abelian quotient. Denote
$${\mathrm{S}(G,N)=\{\chi\in \mathrm{S}(G)\mid \chi(N)=0\}}.$$ Then $N$ is finitely generated if and only if $S(G, N)\subseteq -\Sigma^1(G;\Z)$. In particular, $G'$ is finitely generated if and only if $\mathrm{S}(G)=\Sigma^1(G;\Z)$.
 \end{theorem}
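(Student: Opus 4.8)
The plan is to work throughout with the Cayley-graph description of $\Sigma^1$ recalled above: fixing a finite generating set $S$ of $G$, a nonzero $\chi\in\mathrm{S}(G)$ lies in $-\Sigma^1(G;\Z)=\Sigma^1(G)$ exactly when the full subgraph $\Gamma_\chi(G)$ of the Cayley graph on $G_\chi=\{g\in G:\chi(g)\geqslant 0\}$ is connected. Two preliminary remarks: a homomorphism $G\to\R$ vanishes on $N$ iff it factors through the abelian group $Q:=G/N$, so $\mathrm{S}(G,N)$ is the unit sphere of the linear subspace $\mathrm{Hom}(Q;\R)\cong\R^{\mathrm{rank}(Q)}\subseteq\mathrm{Hom}(G;\R)$---in particular compact and antipodally symmetric---and $-\Sigma^1(G;\Z)$ is open by \cite{FGS}. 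Granting the equivalence, the ``in particular'' clause follows at once: every homomorphism $G\to\R$ kills $G'$ since $\R$ is abelian, so $\mathrm{S}(G,G')=\mathrm{S}(G)$, and because $-\Sigma^1(G;\Z)\subseteq\mathrm{S}(G)=-\mathrm{S}(G)$ the inclusion $\mathrm{S}(G,G')\subseteq-\Sigma^1(G;\Z)$ is equivalent to $\mathrm{S}(G)=\Sigma^1(G;\Z)$.

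For ``$N$ finitely generated $\Rightarrow\mathrm{S}(G,N)\subseteq-\Sigma^1(G;\Z)$'', enlarge $S$ to contain a finite generating set of $N$. Given $\chi\in\mathrm{S}(G,N)$ and $g\in G_\chi$, write $\bar g^{-1}\in Q$ as a product of images of elements of $S$ and reorder the factors so that those of positive $\chi$-value come first, then those of $\chi$-value $0$, then those of negative $\chi$-value. The resulting word $w$ over $S^{\pm1}$ has every prefix-product $g\cdot(\text{prefix of }w)$ of $\chi$-value at least $\chi(g)+\chi(\bar g^{-1})=0$, so the path from $g$ through these prefixes stays in $G_\chi$ and ends at $g\cdot w=n\in N$; since $\chi(n)=0$, one then walks from $n$ to $1$ using only the adjoined generators of $N$, which have $\chi$-value $0$. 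Hence $\Gamma_\chi(G)$ is connected and $\chi\in-\Sigma^1(G;\Z)$.

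The converse is the heart of the matter. Assuming $\mathrm{S}(G,N)\subseteq-\Sigma^1(G;\Z)$, compactness of $\mathrm{S}(G,N)$ against openness of $-\Sigma^1(G;\Z)$ upgrades the pointwise connectivity of the $\Gamma_\chi(G)$ to a uniform one: for all $\chi$ in a neighbourhood of $\mathrm{S}(G,N)$, every $g\in G_\chi$ is joined to $1$ by a $\Gamma_\chi(G)$-path whose length is linearly controlled by the word length of $g$. One then runs the combinatorial argument of \cite{BNS}: a word over $S^{\pm1}$ representing a given $n\in N$ is contracted---using, whenever a prefix-product threatens to leave $G_\chi$ for some relevant $\chi$, a uniformly bounded detour involving only conjugates of a fixed finite set---down to a word over a finite subset $T\subseteq N$, which is then the sought finite generating set of $N$. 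Equivalently, one can induct on $r=\mathrm{rank}(Q)$: for $r=0$, $N$ has finite index and is finitely generated; for $r\geqslant 1$, a surjection $Q\twoheadrightarrow\Z$ gives a discrete character $\chi_0\colon G\twoheadrightarrow\Z$ with $N\leqslant\ker\chi_0$ and $\pm\chi_0\in\mathrm{S}(G,N)\subseteq-\Sigma^1(G;\Z)$; the fibering theorem of \cite{BNS} then gives $G=\ker\chi_0\rtimes\Z$ with $\ker\chi_0$ finitely generated, and the inductive hypothesis applied to $1\to N\to\ker\chi_0\to\ker(Q\to\Z)\to1$ (quotient of rank $r-1$) finishes, after checking that characters of $\ker\chi_0$ vanishing on $N$ extend to $G$ (injectivity of $\R$ among abelian groups) and that $\Sigma^1$ restricts appropriately along $\ker\chi_0\hookrightarrow G$.

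The main obstacle is exactly this converse---and within it, the rank-one fibering statement (that $\pm\chi_0\in-\Sigma^1(G;\Z)$ forces $\ker\chi_0$ finitely generated), which is a genuine theorem of \cite{BNS} proved by a careful contraction in the Cayley graph, together with keeping the connectivity bounds uniform over $\mathrm{S}(G,N)$ in the presence of torsion in $Q$ and of irrational characters, which cannot be perturbed to rational ones without leaving the closed set $\mathrm{S}(G,N)$. The remaining parts---the setup, the special case, and the forward implication---are routine.
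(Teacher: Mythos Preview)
The paper does not supply its own proof of this theorem; it is quoted verbatim from \cite{BNS} as background. So there is nothing to compare against, and I evaluate your proposal on its own.

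Your forward direction is clean and correct: reordering a word for $\bar g^{-1}$ in the abelian quotient $Q$ so that the $\chi$-positive letters come first guarantees every prefix has $\chi$-value $\geqslant-\chi(g)$, and the tail walk inside $N$ stays at $\chi$-level $0$. The ``in particular'' clause is also handled correctly.

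The converse is where the proposal is incomplete, and you are candid about this. Of your two sketches, the compactness/uniform-connectivity outline is essentially how \cite{BNS} actually proceeds, and deferring its details to that source is reasonable. Your second sketch---induction on $\mathrm{rank}(Q)$ via a rank-one fibering---has a genuine gap beyond the rank-one base case. The step ``$\Sigma^1$ restricts appropriately along $\ker\chi_0\hookrightarrow G$'' is not a routine check: it is \emph{false} in general that $\chi\in\Sigma^1(G)$ with $\chi|_K\neq 0$ forces $\chi|_K\in\Sigma^1(K)$, even when $K\trianglelefteq G$ is finitely generated with $G/K\cong\Z$. For instance, take $G=F_2\times\Z$ with $F_2=\langle a,b\rangle$, $K=F_2$, and $\chi(a)=1$, $\chi(b)=0$, $\chi(t)=100$; then $\chi\in\Sigma^1(G)$ (the central $t$ lets one raise the $\chi$-level arbitrarily before unwinding $w\in F_2$), yet $\chi|_{F_2}\notin\Sigma^1(F_2)=\emptyset$. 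This example does not violate the theorem---the ambient hypothesis $\mathrm{S}(G,N)\subseteq\Sigma^1(G)$ fails there---but it shows that your restriction step cannot be established from the data $\chi\in\Sigma^1(G)$ alone. To rescue the induction you would need to exploit the \emph{full} hypothesis that an open neighbourhood of $\mathrm{S}(G,N)$ lies in $\Sigma^1(G)$ (so that perturbations $\chi+\varepsilon\chi_0$ are also available), and carrying this through is of comparable difficulty to the direct compactness argument you sketched first. I would drop the inductive variant, or else flag explicitly that the restriction step requires its own argument using the openness of $\Sigma^1(G)$ around all of $\mathrm{S}(G,N)$.
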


\subsection{Jump loci ideal and Alexander ideal}  Let $H=H_1(X; \Z)$, the abelianization of $G$. %Then it is clear that $\mathrm{S}(G)=\mathrm{S}(H)$.
Denote $X^H$ the maximal abelian covering of $X$. %The cell structure of $X$ lifts to cell structures on the maximal abelian cover $X^H$ with the actions of  $H$ by deck transformations. 
Let $R$ be $\Z$ or a field $\bbmk$.
Similarly. the cellular chain complex $X^H$ with $R$-coefficients, $C_{*}(X^H; R)$, is a bounded complex of finitely generated free $R H$-modules:

\begin{equation}
    \label{chain compelx}
 \cdots  \to  C_{i+1}(X^H; R) \xrightarrow{\partial_{i}} C_i(X^H; R) \xrightarrow{\partial_{i-1}} C_{i-1}(X^H; R) \to \cdots \xrightarrow{\partial_0} C_0(X^H; R)  \to 0 .
\end{equation} 

\begin{definition}\label{def Alexander}
The {\it $i$-th Alexander invariant} $ H_{i}(X^H; R)$  is  the 
$i$-th homology of  $C_*(X^H; R)$ as chain complex of $R H$-modules.
The \textit{$i$-th Alexander ideal $\mathrm{Ann}( H_i(X^H; R))$} is the annihilator ideal of $H_{i}(X^H; R)$ as finitely generated $RH$-modules.
\end{definition} 

\begin{definition}\label{defn:jump ideal}
    The \textit{$i$-th jump ideal of $X$} is defined as
$$\mathcal{J}^i(X; R)=I_{c_i}(\partial_{i}\oplus\partial_{i-1})$$
where $c_i=\mathrm{rank}(C_i(X^H; R))$ as free $RH$-modules and  $I_{c_i}(-)$ denotes the ideal generated by size $c_i\times c_i$ minors of the matrix and is commonly referred to as the Fitting ideal.
\end{definition}
When $R=\bbmk$, the maximal spectrum of these two types of ideals are under the more well-known names: Alexander varieties and homology jump loci, see e.g. \cite{PapaSuciu10, PS14}. 
Moreover, Papadima and Suciu established a comparison between
these two types of ideals.

\begin{theorem}[\cite{PapaSuciu10} Theorem 3.6, or \cite{PS14} Theorem 2.5] \label{thm PS two types of ideals} 
With the above notations, assumptions and $R=\Z$ or a field $\bbmk $, for any $k\geqslant 0$ we have $$\sqrt{\mathcal{J}^{\leqslant k}(X; R)}=\sqrt{\mathrm{Ann} (H_{\leqslant k}(X^H; R))},$$
where $\mathcal{J}^{\leqslant k}(X; R)=\bigcap\limits_{0\leqslant j\leqslant k}\mathcal{J}^j(X; R)$ and $\mathrm{Ann}(H_{\leqslant k}(X^H; R))=\bigcap\limits_{0\leqslant j\leqslant k}\mathrm{Ann}(H_{j}(X^H; R))$. 
\end{theorem}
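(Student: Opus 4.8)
The plan is to turn the equality of radicals into an equality of closed subsets of $\spec(RH)$ and then into a purely local statement about complexes of free modules. Since $RH$ is Noetherian and $H_j(X^H;R)$ is a finitely generated $RH$-module, $\sqrt{\mathrm{Ann}(H_j(X^H;R))}$ is the defining ideal of $\mathrm{Supp}_{RH}(H_j(X^H;R))$; as radicals commute with finite intersections of ideals and $V(\bigcap) = \bigcup V$, the theorem is equivalent to
\[
\bigcup_{0\leqslant j\leqslant k} V\big(\mathcal{J}^j(X;R)\big)=\bigcup_{0\leqslant j\leqslant k}\mathrm{Supp}_{RH}\big(H_j(X^H;R)\big).
\]
I emphasize that the individual loci $V(\mathcal{J}^j)$ and $\mathrm{Supp}(H_j)$ need not coincide; only the unions up to degree $k$ do, any mismatch being absorbed by the neighbouring degree.

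Next I would evaluate each side at a prime $\mathfrak{p}$ of $RH$. Write $A=(RH)_\mathfrak{p}$, $k=k(\mathfrak{p})$, and let $(C_*)_\mathfrak{p}$ be the localized complex of finite free $A$-modules. Because $\mathcal{J}^j(X;R)=I_{c_j}(\partial_j\oplus\partial_{j-1})$ is a maximal-minor Fitting ideal of a matrix with $c_j=\mathrm{rank}_{RH}C_j(X^H;R)$ rows, one has $\mathfrak{p}\in V(\mathcal{J}^j(X;R))$ iff the rank of $\partial_j\oplus\partial_{j-1}$ over $k$ is $<c_j$. Using the complex relation (which, after reduction mod $\mathfrak{p}$, puts the image of $\partial_j$ inside the kernel of $\partial_{j-1}$) this rank equals $\mathrm{rank}_k(\partial_j\otimes k)+\mathrm{rank}_k(\partial_{j-1}\otimes k)$, and since $\dim_k H_j(C_*\otimes_{RH}k)=c_j-\mathrm{rank}_k(\partial_{j-1}\otimes k)-\mathrm{rank}_k(\partial_j\otimes k)$, I obtain
\[
\mathfrak{p}\in V\big(\mathcal{J}^j(X;R)\big)\iff H_j\big(C_*(X^H;R)\otimes_{RH}k(\mathfrak{p})\big)\neq 0.
\]
On the other side, exactness of localization gives $\mathfrak{p}\in\mathrm{Supp}_{RH}(H_j(X^H;R))$ iff $H_j((C_*)_\mathfrak{p})\neq 0$. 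So the theorem reduces to: for the complex $(C_*)_\mathfrak{p}$ of finite free modules over the local ring $A$, $H_j((C_*)_\mathfrak{p})=0$ for all $j\leqslant k$ if and only if $H_j((C_*)_\mathfrak{p}\otimes_A k)=0$ for all $j\leqslant k$.

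I would prove this local dichotomy by peeling off free direct summands one homological degree at a time, starting at degree $0$. Over the local ring $A$, a short exact sequence whose last nonzero term is free splits, so inductively the cycle submodules $B_j=\ker(\partial_{j-1})\subseteq C_j$ are free for $j\leqslant k$ and $C_j\cong B_j\oplus B_{j-1}$; Nakayama's lemma then lets one lift (for the forward implication) or detect (for the backward one) surjectivity of the corestricted differential $C_{j+1}\twoheadrightarrow B_j$ after reduction mod $\mathfrak{p}$ — and this surjectivity is exactly exactness of the complex, respectively of its reduction, at position $j$. Running the induction up to degree $k$ in both directions finishes the proof. The hard part is precisely this local step: since $RH$ is neither a PID nor regular, Smith normal form is unavailable and one must argue degree by degree, taking care that only the homology in the range $[0,k]$ is controlled (so the top module $C_{k+1}$ is only partially understood) and watching the boundary degree $j=0$, where $\partial_{-1}=0$ and $H_0$ is a plain cokernel.
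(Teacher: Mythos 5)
The paper itself does not prove this theorem; it cites Papadima--Suciu and remarks that their argument extends from a field to $\Z$. Your reconstruction --- translate the equality of radicals into $\bigcup_{j\le k}V(\mathcal{J}^j)=\bigcup_{j\le k}\mathrm{Supp}(H_j)$, test at each prime $\mathfrak{p}$ of $RH$, identify $V(\mathcal{J}^j)$ with $\{\mathfrak{p}: H_j(C_*\otimes k(\mathfrak{p}))\neq 0\}$, and then reduce to the local Nakayama dichotomy over $(RH)_\mathfrak{p}$ --- is exactly the expected plan, works uniformly for any Noetherian $R$, and the peel-off-free-summands proof of the local lemma (with your careful remarks about degree $0$ and about $C_{k+1}$ being only partially controlled) is sound.

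One justification in the middle is off, however. You assert that $\mathrm{rank}_{k(\mathfrak{p})}\big((\partial_j\oplus\partial_{j-1})\otimes k(\mathfrak{p})\big)$ equals $\mathrm{rank}(\partial_j\otimes k)+\mathrm{rank}(\partial_{j-1}\otimes k)$ ``using the complex relation.'' For the block-diagonal matrix $\partial_j\oplus\partial_{j-1}$ this additivity is elementary linear algebra and the complex relation plays no role whatsoever; what makes $I_{c_j}(\partial_j\oplus\partial_{j-1})$ cut out the jump locus is the minor-product identity $I_{c_j}(A\oplus B)=\sum_a I_a(A)\,I_{c_j-a}(B)$, not $\partial_{j-1}\partial_j=0$. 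If instead you had in mind the $c_j\times(c_{j+1}+c_{j-1})$ matrix obtained by adjoining $\partial_j$ and $\partial_{j-1}^T$ side by side, then the complex relation is \emph{not} sufficient: over $\bbmk=\mathbb{F}_2$ take $\partial_j=(1,1)^T$ and $\partial_{j-1}=(1,1)$; then $\partial_{j-1}\partial_j=0$ and $H_j=\ker\partial_{j-1}/\mathrm{im}\,\partial_j=0$, yet the $2\times 2$ determinant of the adjoined matrix is $1\cdot1-1\cdot1=0$, so its rank is $1<2$ and the degree-by-degree identification $V(\mathcal{J}^j)=\{\mathfrak{p}:H_j(C_*\otimes k(\mathfrak{p}))\neq0\}$ would fail. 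So keep the block-diagonal reading, replace the appeal to the complex relation with the minor-product identity, and the argument closes correctly.
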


\begin{remark}
Papadima and Suciu proved the above results for $R=\bbmk$, while their argument goes without any difficulty to $R=\Z$, see the proof of \cite[Theorem 2.5]{PS14}.
\end{remark}

In homological degree one, both $\cJ^1(X; R)$ and $\Ann(H_1(X^H; R))$ depend only on the fundamental group $G$ (in fact only on $G/G''$, see  e.g. \cite[Section 2.5]{Suciu14abelian_cover}). One can thus denote $\cJ^1(X; R)$  by $\cJ^1(G; R)$.  % and $\Ann(H_1(G/G''; R))$, respectively. 
 Moreover, one may compute them by abelianizing the matrix of Fox derivatives of $G$, see e.g. \cite{Fox}.

If $R=\bbmk$ is an algebraically closed field, the group of $\bbmk$-valued  characters, $ \mathrm{Hom}(G; \bbmk^*)$, is a commutative affine algebraic group. Each character $\rho$ in $\mathrm{Hom}(G; \bbmk^*)$ defines a rank one local system on $X$, denoted by $L_{\rho}$. Since $\bbmk^*$ is abelian, we have $\mathrm{Hom}(G; \bbmk^*)=\mathrm{Hom}(H;\bbmk^*)$.%, where $H$ is the abelianization of $G$.

\begin{definition} \label{def homology jump loci}
   The $i$-th homology jump loci of $X$ (over $\bbmk$) are defined as
$$\sV^i(X; \bbmk)\coloneqq \lbrace \rho\in \mathrm{Hom}(G; \bbmk^*) \mid  H_{i}(X; L_{\rho})\neq 0 \rbrace.$$ 
\end{definition}

By definition $\sV^i(X; \bbmk)$ is the variety  of the ideal $\cJ^i(X; \bbmk)$. Hence $\sV^1(X;\bbmk)$ depends only on the fundamental group $G$ and one can denote it by $\sV^1(G;\bbmk)$.
%The set $\sV^i(X; \bbmk)$ depends only on the homotopy type of $X$.

\bigskip
 
In his thesis, Sikorav reinterpreted the BNS invariant of a finitely generated group in terms of the Novikov homology in degree $1$ (see \cite{Sikorav} or \cite{Sikorav_Survery}). It was later generalized to higher degrees by Bieri  and Renz in \cite{BR88} and from groups to spaces by Farber, Geoghegan and Sch\"utts in \cite{FGS}.  As an application of this interpretation, Papadima and Suciu proved the following theorem, which connects the BNSR invariants with the homology jump loci of $X$. The degree $1$ case originates in the work of Delzant \cite[Proposition 1]{Delzant08}.

\begin{theorem} [Theorem 10.1, \cite{PapaSuciu10}] \label{thm PS} 
Let $X$ be a connected finite CW complex and let $\bbmk$ be a field. Suppose $ \rho\colon G\to \bbmk^*$ is a multiplicative character such that $H_i(X; L_\rho)\neq 0$ for some $0\leqslant i \leqslant k$. Let $v \colon \bbmk^*\to \R$ be a valuation on $\bbmk$ such that $\chi=v \circ \rho$ is non-zero. Then we have $\chi \notin \Sigma^k(X; \Z)$. 
\end{theorem}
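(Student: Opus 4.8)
The plan is to prove \cref{thm PS} by exhibiting a valuation on the relevant group ring that witnesses the failure of the finite-$k$-type condition, following Sikorav's Novikov-homology interpretation of the BNSR invariants. Recall that $\chi \in \Sigma^k(X;\Z)$ is equivalent to the statement that $C_*(\widetilde X;\Z)$ is of finite $k$-type over $\Z G_\chi$, and by the Novikov criterion (Sikorav, Bieri--Renz, Farber--Geoghegan--Sch\"utts) this is equivalent to the Novikov homology $H_i(X; \widehat{\Z G}_{\chi})$ vanishing for $i \le k$, where $\widehat{\Z G}_\chi$ is the Novikov completion of $\Z G$ in the direction $-\chi$ (conventions depending on sign). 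So the goal reduces to producing a non-trivial class in Novikov homology in some degree $\le k$ from the hypothesis $H_i(X; L_\rho) \neq 0$.

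First I would set up the rank-one local system $L_\rho$ concretely: it is the $\Z G$-module $\bbmk$ on which $g \in G$ acts by multiplication by $\rho(g) \in \bbmk^*$, so $H_i(X; L_\rho) = H_i(C_*(\widetilde X;\Z) \otimes_{\Z G} \bbmk_\rho)$. Next, using the valuation $v$ on $\bbmk$ with $\chi = v\circ \rho \neq 0$, I would construct a ring homomorphism from (a suitable localization/completion of) $\Z G_\chi$ — or directly from the Novikov ring $\widehat{\bbmk G}_\chi$ — into a valued field, so that $\bbmk_\rho$ becomes a module over the Novikov ring. The key point is that the character $\rho$, together with the valuation $v$, is compatible with the Novikov completion: elements of $\Z G$ with $\chi$-support bounded below get sent, under $\rho$, to elements of $\bbmk$ of valuation bounded below, and because $v$ is non-trivial on the image this extends to the completion. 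Concretely, one filters $\bbmk G$ by $\chi$-degree and $\bbmk$ by $v$-value and checks $\rho$ is filtered; this lets one factor $\bbmk_\rho$ through the Novikov ring and conclude that $H_i(X;\widehat{\bbmk G}_\chi) \otimes (\text{something}) \ne 0$, hence $H_i(X;\widehat{\bbmk G}_\chi)\ne 0$, hence $\chi \notin \Sigma^k(X;\Z)$.

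More carefully, I would argue by contradiction: suppose $\chi \in \Sigma^k(X;\Z)$. Then $C_*(\widetilde X;\Z)$ is of finite $k$-type over $\Z G_\chi$, and base-changing to $\bbmk$ and completing, the Novikov chain complex $C_*(\widetilde X;\Z)\otimes_{\Z G_\chi}\widehat{\bbmk G}_\chi$ has vanishing homology in degrees $< k$ and finitely generated homology in degree $k$; more to the point the standard Novikov-type argument (Sikorav's lemma) shows $H_i$ vanishes for all $i \le k$ in the completed complex, or at least that $H_i(X;L_\rho)$ must vanish for $i \le k$ after the appropriate flatness/base-change step. Since $\bbmk_\rho$ factors as a $\widehat{\bbmk G}_\chi$-module (this is exactly where $\chi = v\circ\rho$ with $v$ non-trivial-on-the-image is used, to ensure $1 - g$ is invertible in the completion whenever $\chi(g) > 0$, matching that $1 - \rho(g)$ behaves controllably), we get $H_i(X;L_\rho) = H_i(C_* \otimes_{\widehat{\bbmk G}_\chi} \bbmk_\rho)$, and vanishing of the former forces vanishing of the latter for $i \le k$ provided the relevant Tor-terms vanish. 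This contradicts the hypothesis $H_i(X;L_\rho)\neq 0$ for some $i \le k$.

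The main obstacle I anticipate is the algebraic bookkeeping in the base-change step: one must be careful that $\bbmk_\rho$ is genuinely a module over the Novikov ring $\widehat{\bbmk G}_\chi$ (not merely over $\bbmk G_\chi$), which requires that the series $\sum_n \rho(g)^n$ and their analogues converge $v$-adically — equivalently that $v(\rho(g)) > 0$ whenever $\chi(g) > 0$, which is precisely $\chi = v \circ \rho$ — and then that the resulting spectral sequence or Tor-vanishing argument actually transfers non-vanishing in the right direction. This is the heart of Papadima--Suciu's Theorem 10.1 and ultimately rests on Sikorav's theorem identifying $\Sigma^k$ with Novikov-acyclicity; I would cite \cite{Sikorav}, \cite{BR88}, \cite{FGS} for that identification and devote the bulk of the proof to the valuation-compatibility and base-change verification, the degree-one seed being Delzant's \cite[Proposition 1]{Delzant08}.
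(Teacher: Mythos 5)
Your proposal is essentially the approach of Papadima and Suciu's original proof (which the paper cites rather than reproduces): pass through the Novikov-homology criterion for $\Sigma^k(X;\Z)$ due to Sikorav, Bieri--Renz, and Farber--Geoghegan--Sch\"utts, use the identity $\chi = v\circ\rho$ to extend $\rho$ to a ring map from the Novikov ring into a valued field, and transfer vanishing by the base-change lemma for bounded-below complexes of finitely generated free modules. The one technical point worth sharpening is that it is $\hat{\bbmk}_\rho$, for $\hat{\bbmk}$ the $v$-adic completion of $\bbmk$, rather than $\bbmk_\rho$ itself, that carries the Novikov-ring module structure (the relevant infinite series need not converge in $\bbmk$), so the final step should pass through $H_i(X;L_\rho)\otimes_{\bbmk}\hat{\bbmk} \cong H_i(X;\hat{\bbmk}_\rho)$ and the flatness of $\hat{\bbmk}$ over $\bbmk$ to pull the vanishing back to $H_i(X;L_\rho)$.
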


%The $C_{*}(X^H; \Z)$  is a complex of finitely generated  free $\Z H$-modules.  in the sense of \cref{def:BNSR_chain_modules}.

\subsection{Proof of Theorem \ref{thm:compare_two_bnsr}}\label{section proof of theorem 1.1}
For a field $\bbmk$, note that $\mathcal{J}^{\leqslant k}(X; \bbmk) = \mathcal{J}^{\leqslant k}(X; \Z)\otimes \bbmk$.
%and $\Trop_{\bbmk}(\mathcal{J}^{\leqslant k}(X; \bbmk))$ only depends on the characteristic of $\bbmk$, not on $\bbmk$ itself {\textcolor{red}{Do we need explanation for this?}}. 
Then the second inclusion in \cref{main inclusion} follows directly from \cref{def tropical ring}. So
we only need to prove the first inclusion in \cref{main inclusion}.

We assume that $\cJ^{\leqslant k}(X; \Z)$ is a proper ideal in $\Z H$, otherwise %the first inclusion in \cref{main inclusion} 
the claim is vacuous. Let $\IP$ be a minimal prime ideal containing $ \sqrt{\cJ^{\leqslant k}(X; \Z)}$.
Fix a valuation $v$ on $\Z$ as studied in Example \ref{example:nonnegative valuations}(b), which is automatically non-negative.
Consider a nonzero $\chi \in \Delta^v_\IP(H)$ with notations as in \cref{thm:BG_main_theorem}.
    Then there exists a valuation $w \colon \Z H/\IP \to \R_{\infty}$ such that 
    $w|_{\Z} = v$  and $w|_H = \chi $. 
Note that $w^{-1}(\infty)$ is a prime ideal of $\Z H/\IP$. Let $\K$ be the fractional field of the quotient domain  $(\Z H/\IP)/w^{-1}(\infty)$. 
Then $w$ can be viewed as a valuation on $\K$ such that $w|_H=\chi$.

    Consider the following composition of maps  
    $$G\twoheadrightarrow H \hookrightarrow \Z H \twoheadrightarrow \Z H/\IP \to \K,$$ 
    which gives a multiplicative character $ \rho \colon G \to \K^* $. Let $L_\rho$ be the corresponding rank one local system on $X$ with coefficient $\K$. We claim that \begin{center}
          $H_i(X; L_\rho)\neq 0$ for some $0\leqslant i \leqslant k$.
     \end{center}
   In fact, $H_*(X; L_\rho)$  can be computed by the chain complex $C_*(X^H; \Z)\otimes_{\Z H} \K,$ where $\K$ is viewed as a $\Z H$-module associated to the representation $\rho$. 
   %$$\cJ^{\leqslant k}(X; L_\rho)=\cJ^{\leqslant k}(X; \Z)\otimes_{\Z H} \K$$ 
Note that  $$\bigcap\limits_{0\leqslant j\leqslant k}\mathcal{J}^j(X; \Z)=\cJ^{\leqslant k}(X; \Z) \subseteq \sqrt{ \cJ^{\leqslant k}(X; \Z)} \subseteq \IP.  $$ Then there exists some $0\leqslant i \leqslant k$ such that $ \mathcal{J}^i(X; \Z) \subseteq \IP$.    By the construction of the field $\K$, we have that $\cJ^{i}(X; \Z)\otimes_{\Z H}\K=0,$ hence $H_i(X; L_\rho)\neq 0$.

Since $\chi=w\circ \rho$ is non-zero,  \cref{thm PS} shows that $\chi \notin \Sigma^k(X; \Z)$, hence $$\mathrm{S}( \Delta^v_\IP(H)) \subseteq \Sigma^k(X; \Z)^{c}$$ for any non-negative valuation $v$ on $\Z$ and any minimal prime ideal $\IP $ containing $\sqrt{ \cJ^{\leqslant k}(X; \Z)}$. Therefore, the first inclusion in \cref{main inclusion} follows from \cref{thm:BG_main_theorem,prop trop equals Sigma complement}.
%$$\bigcup\limits_{\IP_j}\bigcup\limits_{v(\Z)\geqslant 0}\mathrm{S}(\Delta_{\IP_j}^v(H)){=}\bigcup\limits_{\IP_j}\Sigma^c(\Z H\slash \IP_j){=}\bigcup\limits_{\IP_j}\mathrm{S}(\Trop_{\Z}(\IP_j))=\mathrm{S}\big(\Trop_{\Z}(\mathcal{J}^{\leqslant k}(X; \Z))\big).$$
%Therefore, the first inclusion in \cref{main inclusion} follows.
The remaining properties are a direct consequence of \cref{thm BG dense}.
\qed
%\begin{remark}\label{rmk:alexander_same}
%With \cref{thm PS two types of ideals}, we know the conclusion of \cref{thm:compare_two_bnsr} holds with the Alexander ideal in the role of jump ideals.
%\end{remark}

%On the other hand, shows 
%for any $\chi \in S\big(\Trop_\Z (\IP)\big)\subseteq S\big(\Trop_\Z(\sqrt{\cJ^{\leqslant k}(X; \Z)})\big)$, 
%there exists a valuation $ w \colon \bbmk(\IP) \to \mathbb{R}_{\infty} $ such that
 %$ (w\circ\kappa)|_H = \chi $.

 %Putting all together,  \cref{thm PS}   shows that $\chi \notin \Sigma^k(X; \Z)$, hence $$\Sigma^k(X; \Z) \subseteq S\big(\Trop_\Z(\cJ^{\leqslant k}(X; \Z))\big)^c.$$

 \begin{remark}\label{rem compare Suciu}
    Let us explain Suciu's work \cite[Theorem 1.1]{Suciu21} in our notations. 
    Assume that $\bbmk$ is algebraically closed. 
Since $\Trop_{\bbmk}(\mathcal{J}^{\leqslant k}(X; \bbmk))$ depends only on $ \sqrt{\mathcal{J}^{\leqslant k}(X; \bbmk)}$
    and $\sV^{\leqslant k}(X;\bbmk)$ is the variety of the ideal $\mathcal{J}^{\leqslant k}(X; \bbmk) $, one can also denote  $\Trop_{\bbmk}(\mathcal{J}^{\leqslant k}(X; \bbmk))$ by $\Trop_{\bbmk}(\sV^{\leqslant k}(X; \bbmk))$. 
    When $\bbmk=\C$, due to \cref{rem Suciu} and \cite[Section 4]{Suciu21}, $\Trop_{\C}(\sV^{\leqslant k}(X; \C))$ is exactly the tropical variety considered in \cite[Theorem 1.1]{Suciu21}. On the other hand, if $\bbmk$ has positive characteristic, using the valued field $ \bbmk((t^\Q))$  one can use Suciu's proof to show that  $\Sigma^k(X;\Z)\subseteq  \mathrm{S}\big(\Trop_{\bbmk}(\sV^{\leqslant k}(X; \bbmk))\big)^c .$
\end{remark}

\subsection{Comparing with Bieri, Groves and Strebel's results}\label{subsection comparing BGS}
When $k=1$, \cref{thm:compare_two_bnsr} is essentially due to Bieri, Groves, and Strebel's work \cite{BS80,BS81,BG84}. We explain it in detail in this subsection.

Let $G$ be a finitely presented group. Set $G'=[G,G]$, and $G''=[G',G']$. 
Then $G/G''$ is the maximal metabelian quotient group of $G$, and $\mathrm{S}(G)=\mathrm{S}(G/G'')$. 
By \cref{prop naturality}, we have $$\Sigma^1(G; \Z)\subseteq \Sigma^1(G/G''; \Z).$$
Consider the short exact sequence
$$1\to G'\slash G''\to G\slash G''\to H\to 1,$$
with $H$ acting on $G'\slash G''$ by conjugation. Since $G$ is finitely presented, $G'/G''$ is finitely generated as a $\Z H$-module. 
In fact, if $X$ is a connected CW complex such that $\pi_1(X)=G$, then $G'/G''$ is the first Alexander invariant $H_1(X^H; \Z)$.  For the metabelian group $G/G'' $, 
  Bieri and Strebel showed in \cite{BS80} (see also \cite{BNS}) that $$ \Sigma^1(G/G''; \Z)=  \Sigma(H_1(X^H; \Z)),$$ where $\Sigma(H_1(X^H; \Z))$ is the Sigma invariant defined in  \cref{sec:background}.
By  \cref{prop trop equals Sigma complement}, we have
$$   \Sigma(H_1(X^H; \Z))= \mathrm{S}\big(\Trop_\Z (\Ann(H_1(X^H; \Z)))\big)^c.$$

Let $I$ be the augmentation ideal of the group ring $\Z H$, i.e. $I=\langle h-1\mid h\in H\rangle$. We have $\mathrm{S}(\Trop_\Z(I))=\emptyset$ and $I=\sqrt{\Ann(H_0 (X^H; \Z))}$. It follows from \cref{thm PS two types of ideals} that
$$\mathrm{S}\big(\Trop_\Z (\Ann (H_1(X^H; \Z)))\big)=\mathrm{S}\big(\Trop_\Z (\Ann (H_{\leqslant 1}(X^H; \Z)))\big)=\mathrm{S}\big(\Trop_{\Z}(\mathcal{J}^{\leqslant 1}(X; \Z))\big).$$
Putting all together, we get that $$\Sigma^1(G; \Z)\subseteq \Sigma^1(G/G''; \Z)=  \mathrm{S}\big(\Trop_{\Z}(\mathcal{J}^{\leqslant 1}(X; \Z))\big)^c ,$$
which is exactly the first inclusion in \cref{main inclusion} for $k=1$. In particular, if $G$  itself is metabelian, then the inclusion becomes an equality (for $k=1$).

\section{Examples and application to Dwyer-Fried sets} \label{section examples}
\subsection{Examples}
In this subsection, we compute various examples.
In the first three examples, we compare \cref{thm:compare_two_bnsr} with \cite[Theorem 1.1]{Suciu21}. These three examples are all one-relator groups with two generators, whose BNS-invariants can be calculated according to Brown's algorithm \cite[Section 4]{Brown}. Notice here we use the invariant $\Sigma^1(G;\Z)$, which is $-\Sigma^1(G)$. %$\Sigma^1(G;\Z)=-\Sigma^1(G)$

%.

\begin{example}\label{example 1}
 Let   $G=\langle a,b\mid aba^{-1}b^{-2}\rangle$ be the Baumslag–Solitar group $\mathrm{BS}(1,2)$. This is the example considered in \cite[Example 8.2]{Suciu21}.
Let $\chi\colon G\to \R$ be the non-zero homomorphism such that $\chi(a)=1$ and $\chi(b)=0$. Then $\mathrm{S}(G)=\{\pm \chi\}$.
The abelianization of the Fox derivative gives $x-2$. Then one gets  
$\sqrt{\cJ^{\leqslant 1}(G; \Z)}=(x-1)\cap (x-2)$ and 
$\sV^{\leqslant 1}(G; \C)=\{1,2\}$. 
Hence \begin{center}
    $\mathrm{S}(\Trop_\Z( \sqrt{\cJ^{\leqslant 1}(G; \Z)}))=\{ \chi\}$ and  $\mathrm{S}(\Trop_\C( \sV^{\leqslant 1}(G; \C))=\emptyset$.
\end{center}
Calculation directly gives that $\Sigma^1(G; \Z)=\{-\chi\}.$
\end{example}

\begin{example}\label{example 2}
Let   $G=\langle a,b\mid a^{-1}b^{-1}ab^2a^{-1}b^{-1}a^2 b^{-1}a^{-1}ba^{-1}bab^{-1}\rangle$. This is the example in \cite[Page 492]{Brown} and in \cite[Example 8.5]{Suciu21}.
The abelianization of the Fox derivative gives $$\begin{pmatrix}
  (x^{-1}_2-1)(x^{-1}_1-1)  & -x_1^{-1}x_2^{-1}(x_1-1)^2
\end{pmatrix}.$$
Then one gets  
$\sqrt{\cJ^{\leqslant 1}(G; \Z)}=(x_1-1)$ and 
$\sV^{\leqslant 1}(G; \C)=\{x_1=1\}$. 
Hence \begin{center}
    $\mathrm{S}(\Trop_\Z( \sqrt{\cJ^{\leqslant 1}(G; \Z)}))=\{ (0,1),(0,-1)\}=\mathrm{S}(\Trop_\C( \sV^{\leqslant 1}(G; \C))$.
\end{center}
 Calculation via Brown's algorithm gives that $\Sigma^1(G; \Z)$ consists of two open arcs on the unit circle, joining the points $(1,0)$ to $(0,1)$ and $(0,1)$ to $(-\frac{\sqrt{2}}{2},-\frac{\sqrt{2}}{2})$. 
So  the first inclusion in \cref{main inclusion}  is strict in this case, as shown below in \cref{fig:example_2}.
\end{example}

\begin{figure}[ht!]
    \centering
    % Second Row
    \begin{subfigure}[b]{0.45\textwidth}
        \centering
        \begin{tikzpicture}[scale=0.4]
            \draw[-,dashed] (-3,0) -- (3,0);
            \draw[-,dashed] (0,-3) -- (0,3);
            \draw[blue,thick] (0,2) circle (4pt);
            \draw[blue,thick] (2,0) circle (4pt);
            \draw[blue,thick] (-1.414213562, -1.414213562) circle (4pt);
            \draw (0,0) circle (2);
            \fill[blue] (-2,0) circle (4pt);
            \draw[blue, thick] 
                (2,0) arc[start angle=0, end angle=225, radius=2];
        \end{tikzpicture}
        \caption{$\Sigma^1(G;\Z)$.}
        \label{fig:example2_BNS}
    \end{subfigure}
    \hfill
    \begin{subfigure}[b]{0.45\textwidth}
        \centering
        \begin{tikzpicture}[scale=0.4]
            \draw[-,dashed] (-3,0) -- (3,0);
            \draw[-,dashed] (0,-3) -- (0,3);
            \draw[blue,thick] (0,2) circle (4pt);
            \draw[blue,thick] (0,-2) circle (4pt);
            \draw[blue, thick] 
                (0,-2) arc[start angle=-90, end angle=90, radius=2];
            \draw[blue, thick] 
                (0,2) arc[start angle=90, end angle=270, radius=2];
        \end{tikzpicture}
        \caption{$\Z(\text{or\ }\C)$-Tropical upper bound.}
        \label{fig:example2_bound}
    \end{subfigure}
%\captionsetup[figure]{labelformat=simple, labelsep=none, labelfont=bf, justification=centering}
    \caption{\cref{example 2}.}
    \label{fig:example_2}
\end{figure}

\begin{example}\label{example 3}
As in \cite[Example 3.5]{PapaSuciu10} and \cite[Lemma 10.3]{SuciuYZ}, given a Laurent polynomial $f(x_1,x_2)\in \Z[x_1^{\pm 1},x_2^{\pm 1}]$, there exists  a group $G$ with two generators and one relation such that its abelianization is $\Z^2$ and the abelianization of the Fox derivative is 
 $$\begin{pmatrix}
 f\cdot (x_2-1)  & -f\cdot (x_1-1)
\end{pmatrix} .$$
It implies that 
$\sqrt{\cJ^{\leqslant 1}(G,\Z)} =(f)\cap (x_1-1,x_2-1) $.
Set $f(x_1,x_2)=x_1+x_2-2$. Then \cref{thm:compare_two_bnsr} gives 
$$\Sigma^1(G; \Z) \subseteq  \mathrm{S}(\Trop_{\Z} (x_1+x_2-2))^c.$$ 
Suciu showed that \cite[Theorem 1.1]{Suciu21}
$$\Sigma^1(G; \Z) \subseteq  \mathrm{S}(\Trop_{\mathbb{C},v_0} (x_1+x_2-2))^c.$$ 
The same proof is indeed valid for any field coefficients, hence one also gets $$\Sigma^1(G; \Z) \subseteq \mathrm{S}(\Trop_{\mathbb{F}_2,\hat{v}_2} (x_1+x_2-2))^c.$$ 
By \cref{example trop comparision} and \cref{fig:trivial_and_mod p,fig:all_three}, we have
$$\mathrm{S}(\Trop_{\mathbb{F}_2,\hat{v}_2} (x_1+x_2-2))\bigcup\mathrm{S}(\Trop_{\mathbb{C},v_0} (x_1+x_2-2)) \subsetneq \mathrm{S}(\Trop_{\Z} (x_1+x_2-2)).$$
\end{example}

We end this subsection with the computations for compact Riemann orbifold groups. 

\begin{definition}  Let $C_{g}$ be a compact Riemann surface of genus $g\geqslant 1$ and let $s\geqslant 0$ be an integer. If $s>0$, fix points $\{q_1,\ldots,q_s\}$ in $C_g$ and  assign to these points an integer weight vector ${\bm \mu}=(\mu_1,\cdots,\mu_s)$ with $\mu_i\geqslant 2$. The orbifold Euler characteristic of the
surface with marked points is defined as $$\chi^\orb(C_g,{\bm \mu})=2-2g- \sum_{j=1}^s (1-\dfrac{1}{\mu_j}).$$
     The orbifold group $\pi_1^{\orb}(C_{g},{\bm \mu})$ associated to these data has presentation as follows: 
\[ \langle x_{1},\ldots,x_{g},y_{1},\ldots,y_{g},z_{1},\ldots,z_{s}|\prod_{i=1}^{g}[x_{i},y_{i}]\cdot\prod_{j=1}^{s} z_{j}=1,z_{j}^{\mu_{j}}=1 \text{ for all } 1\leqslant j\leqslant s\rangle. \]
\end{definition}

If $\chi^\orb(C_g,{\bm \mu})=0$, i.e. $ g=1$ and $s=0$, then $\pi_1(C_g)\cong \Z^2$. Hence  $\sV^1(\Z^2;\bbmk)=\{1\}$ for any algebraically closed field $\bbmk$ and $\Sigma^1(\pi_1(C_g);\Z)=S^1$.
So we focus on the case $\chi^\orb(C_g,{\bm \mu})<0$, i.e. either $g>1$ or $g=1$ and $s>0$. 
 Set $\theta({\bm \mu})=\frac{\prod_{j=1}^s \mu_j}{\lcm(\mu_1,\cdots,\mu_s)}$. Then $H=H_1(\pi_{1}^{\orb}(C_{g},{\bm \mu}); \Z)$ has free abelian part $\Z^{2g}$ and its torsion part has order $\theta({\bm \mu})$. Next we compute the BNS invariant and homology jump loci of $\pi_{1}^{\orb}(C_{g},{\bm \mu}) $. 
 
\begin{proposition}\label{prop compact orbifold group}
Let $\bbmk$ be an algebraically closed field with $\mathrm{char}(\bbmk)=p\geqslant 0$.
With the above assumptions and notations, if $\chi^\orb(C_g,{\bm \mu})<0$, we have 
\[
\sV^1(\pi_{1}^{\orb}(C_{g},{\bm \mu}); \bbmk)  = 
\begin{cases}
\Hom(H;\bbmk^*),   &  \text{if } g > 1, \text{ or } g = 1 \text{ and } p \\
                   & \text{divides some } \mu_j, \\[5pt]
\Hom(H;\bbmk^*)' \cup \{1\},   &  \text{if } g = 1, \theta({\bm \mu}) > 1, \\
                   & \text{and } p \text{ does not divide any } \mu_j, \\[5pt]
\{1\},             &  \text{if } g = 1, \theta({\bm \mu}) = 1, \\
                   & \text{and } p \text{ does not divide any } \mu_j, 
\end{cases}
\]
 where $\Hom(H;\bbmk^*)'$ is  $\Hom(H;\bbmk^*)$ taking out the connected component containing $1$, and we use the convention that $0$ does not divide any nonzero integer.  Then 
   $$\Trop_\bbmk(\sV^1(\pi_{1}^{\orb}(C_{g},{\bm \mu});\bbmk))=\begin{cases}
\{0\},   &   \text{ if } g=1, \theta({\bm \mu})=1, \text{ and } p \text{ does not divide any } \mu_j, \\
 \R^{2g}, & \text{ otherwise.} 
 \end{cases}
 $$
Hence   $\Sigma^1(\pi_{1}^{\orb}(C_{g},{\bm \mu});\Z)=\emptyset .$
\end{proposition}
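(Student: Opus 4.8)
The goal is to compute the homology jump loci $\sV^1(\pi_1^{\orb}(C_g,{\bm\mu});\bbmk)$, deduce the tropical variety, and conclude $\Sigma^1=\emptyset$ via \cref{thm:compare_two_bnsr}. First I would set up the Fox calculus for the given presentation. Writing the relators $r_0=\prod[x_i,y_i]\cdot\prod z_j$ and $r_j=z_j^{\mu_j}$, the abelianized Jacobian is a matrix over $\bbmk H$ whose rows come from the $r_j$ and whose columns are indexed by the generators $x_i,y_i,z_j$. Since $H$ abelianizes the surface-relator commutators, the $x_i,y_i$-columns of the $r_0$-row vanish after abelianization, and the only surviving entries of that row are in the $z_j$-columns, each equal (after abelianization and up to units) to $1$ (because $[x_i,y_i]$ and the partial of the product telescopes to a single $1$ in each $z_j$ slot once we pass to $\bbmk H$; more precisely $\partial r_0/\partial z_j \mapsto t_j'$ where $t_j'$ is a monomial, which is a unit in $\bbmk H$). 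The $r_j$-row has a single nonzero entry in the $z_j$-column equal to $1+t_j+\cdots+t_j^{\mu_j-1}$ where $t_j$ is the image of $z_j$ in $H$, an element of finite order $\mu_j$ in the torsion part.

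\textbf{Computing the jump locus.} With this matrix in hand, $\cJ^1(G;\bbmk)$ is the Fitting ideal of the appropriate minors, and $\sV^1(G;\bbmk)$ is its vanishing locus inside $\Hom(H;\bbmk^*)=\coprod(\bbmk^*)^{2g}$. The $z_j$-column of the $r_0$-row being a unit forces, on a rank-one local system $\rho$, a dependency that makes the relevant maximal minor detect exactly when enough of the cyclotomic factors $1+\rho(z_j)+\cdots+\rho(z_j)^{\mu_j-1}$ vanish. I would analyze this case by case exactly as in the statement: (a) if $g>1$ the surface part contributes enough ``slack'' (rank $2g-1\geqslant 3$ worth of trivial columns) that the Fitting ideal is contained in the augmentation ideal and the locus is all of $\Hom(H;\bbmk^*)$ --- here I would invoke the classical computation that $b_1$ of the maximal abelian cover is infinite, equivalently compare with $\sV^1$ of the underlying genus-$g$ surface group or the known orbifold computation (cf. the references to \cite{PapaSuciu10,Suciu09,DPS}); (b) if $g=1$ and $p\mid\mu_j$ for some $j$, then $1+t_j+\cdots+t_j^{\mu_j-1}=(1-t_j)^{\mu_j}/(1-t_j)$ has a repeated root structure over characteristic $p$ forcing the vanishing of the minor on all of $\Hom(H;\bbmk^*)$ --- concretely $1+T+\cdots+T^{\mu_j-1}$ factors with a factor of $(T-1)^{p^{v_p(\mu_j)}-1}$-type behavior, so it vanishes identically on the identity component's direction; (c) if $g=1$ and $p\nmid\mu_j$ for all $j$, then each cyclotomic sum is a separable unit away from specific torsion characters, and a direct rank count of the $2\times 2$ (since $2g=2$) situation shows the locus is the union of the non-identity components plus $\{1\}$ when $\theta({\bm\mu})>1$ (there genuinely are other components), and just $\{1\}$ when $\theta({\bm\mu})=1$ (the torsion is ``diagonal'' enough that no non-identity component survives).

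\textbf{From jump locus to tropicalization and to $\Sigma^1$.} Once $\sV^1$ is known, the tropical variety follows immediately from \cref{rem Suciu}: in cases (a) and (b), $\sV^1$ contains a full torus $(\bbmk^*)^{2g}$ (the identity component, or a translate of it), so its tropicalization is all of $\R^{2g}$; in case (c) with $\theta({\bm\mu})>1$, the extra components are translates of the full torus, again giving $\R^{2g}$; in case (c) with $\theta({\bm\mu})=1$, $\sV^1=\{1\}$ is a single point with valuation $0$, so $\Trop_\bbmk(\sV^1)=\{0\}$. Finally, $\Sigma^1(\pi_1^{\orb};\Z)=\emptyset$: by \cref{thm:compare_two_bnsr}, $\Sigma^1(G;\Z)\subseteq \mathrm S\big(\Trop_{\bbmk}(\cJ^{\leqslant 1}(X;\bbmk))\big)^c$ for any algebraically closed $\bbmk$; taking $\bbmk$ with $p>0$ dividing some $\mu_j$ (or simply any $\bbmk$ when $g>1$), we land in case (a) or (b) where $\Trop_\bbmk(\sV^1)=\R^{2g}$, hence $\mathrm S(\Trop_\bbmk(\cJ^{\leqslant 1}))=\mathrm S(G)$ and its complement is empty. (When $g=1$ and $\theta({\bm\mu})=1$ one must still choose $p\mid \mu_j$ for some $j$, using $s>0$ since $\chi^{\orb}<0$ forces at least one $\mu_j\geqslant 2$.) The main obstacle I anticipate is case (c): carefully identifying \emph{which} non-identity components of $\Hom(H;\bbmk^*)$ lie in $\sV^1$ requires bookkeeping with the torsion subgroup of $H$ and the quantity $\theta({\bm\mu})=\prod\mu_j/\lcm(\mu_j)$, and distinguishing $\theta({\bm\mu})=1$ from $\theta({\bm\mu})>1$ at the level of the Fitting ideal; the surface-group cases are comparatively routine once the Fox Jacobian is abelianized.
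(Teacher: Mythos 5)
The paper's own proof is essentially a citation: it points to \cite[Section 2]{ACM} and \cite[Section 10]{Suciu21} for $\mathrm{char}(\bbmk)=0$ and to \cite[Section 3.2]{LiLiu} (via the Fox calculus of \cite[Proof of Proposition 2.11]{ACM}) for $\mathrm{char}(\bbmk)=p>0$, and then observes the tropical computation and the vanishing of $\Sigma^1$ follow. Your attempt to carry out the Fox calculus directly is a reasonable alternative, and your final argument for $\Sigma^1=\emptyset$ (including the crucial remark that when $g=1$ and $\theta({\bm\mu})=1$ one needs a prime $p$ dividing some $\mu_j$, available because $\chi^{\orb}<0$ forces $s>0$) is exactly right.

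However, there is a genuine error in your computation of the abelianized Jacobian, and it is consequential. You claim that ``the $x_i,y_i$-columns of the $r_0$-row vanish after abelianization, and the only surviving entries of that row are in the $z_j$-columns.'' This is false. For $[x,y]=xyx^{-1}y^{-1}$, the Fox derivative $\partial[x,y]/\partial x = 1 - xyx^{-1}$ abelianizes to $1-y$, and $\partial[x,y]/\partial y = x - xyx^{-1}y^{-1}$ abelianizes to $x-1$; neither is zero in $\bbmk H$ (they only vanish upon evaluation at the trivial character). For $g>1$ the conclusion $\sV^1=\Hom(H;\bbmk^*)$ survives this mistake, because a simple corank count on the $(1+s)\times(2g+s)$ matrix already forces $H_1\neq 0$ for every $\rho$. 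But for $g=1$ the mistake is fatal: if the $x_1,y_1$-entries of the $r_0$-row really were zero, the row-reduced matrix at any $\rho\neq 1$ with $\rho(z_j)=1$ for all $j$ and $p\nmid\mu_j$ would have rank at most $s$, hence you would (wrongly) conclude $\rho\in\sV^1$, i.e.\ $\sV^1=\Hom(H;\bbmk^*)$ even when $\theta({\bm\mu})=1$ --- contradicting the proposition. The correct nonzero entries $1-\rho(y_1)$, $\rho(x_1)-1$ are exactly what produce rank $s+1 > s$ when $\rho$ is nontrivial only on the free part, and hence are what make $\sV^1=\{1\}$ in the $g=1$, $\theta=1$, $p\nmid\mu_j$ case. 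Your case (c) is phrased as ``a direct rank count'' without details and you flag it as the hard part; that rank count cannot be done correctly starting from the incorrect $r_0$-row, so this step needs to be fixed before the argument goes through.

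A second, smaller imprecision in case (b): the identity $1+t+\cdots+t^{\mu-1}=(1-t)^{\mu}/(1-t)$ holds only for $\mu$ a power of $p$; for general $\mu$ with $p\mid\mu$ the relevant fact is simply that the evaluation at $t=1$ equals $\mu\equiv 0$ in $\bbmk$. This does not affect your conclusion but should be phrased correctly.
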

\begin{proof}
    When $\mathrm{char}(\bbmk)=0$, the computations for $\sV^1(\pi_{1}^{\orb}(C_{g},{\bm \mu});\bbmk)$ can be found  in \cite[Section 2]{ACM} or \cite[Section 10]{Suciu21}.
    When $\mathrm{char}(\bbmk)=p>0$, by the Fox calculus presented in \cite[Proof of Proposition 2.11]{ACM}, one can compute $\sV^1(\pi_{1}^{\orb}(C_{g},{\bm \mu});\bbmk)$ as in \cite[Section 3.2]{LiLiu}. %We simply need to notice that  $\ell({\bm \mu})$ contains the $i$ automatically if $\mu_i$ is divided by $p$, with the $\ell({\bm \mu})$ defined in \cite[Definition 2.8]{ACM}. 
    Then the computations for tropical variety follow easily. 
Hence $\Sigma^1(\pi_{1}^{\orb}(C_{g},{\bm \mu});\Z)=\emptyset $ by \cref{thm:compare_two_bnsr}.
\end{proof}

\begin{remark}\label{remark orbifold group}
In the above proof, if  $\chi^\orb(C_g,{\bm \mu})<0$, we have
$$
\Sigma^1(\pi_{1}^{\orb}(C_{g},{\bm \mu});\Z) =\begin{cases}
\mathrm{S}( \Trop_\bbmk(\sV^1(\pi_{1}^{\orb}(C_{g},{\bm \mu});\bbmk)))^c,   &  \text{if } g = 1, \theta({\bm \mu}) = 1, \\
                   & \text{and } p \text{ divides some } \mu_j \\[5pt]
   \mathrm{S}(\Trop_\C (\sV^1(\pi_{1}^{\orb}(C_{g},{\bm \mu});\C)))^c, &  \text{ otherwise.}
\end{cases} 
$$
This shows that the $p$-adic tropicalization considered in \cref{prop three trop union Z} is not needed for $\Trop_\Z(\cJ^1( \pi_{1}^{\orb}(C_{g},{\bm \mu});\Z))$.
%where we pick $\bbmk$ such that $\mathrm{char}(\bbmk)$   divides some  $\mu_j$ in the first case.
\end{remark}
 
%In fact, as shown , $\Trop_\C (\sV^1(\pi_{1}^{\orb}(C_{g},{\bm \mu}),\C))=\R^{2g} $  if either $g>1$ or  $g=1$ and $ \theta({\bm \mu})>1$. For the case $g=1$ and $  \theta({\bm \mu})=1$,  $\sV^1(\pi_{1}^{\orb}(C_{g},{\bm \mu}),\C)=\{1\}$. 
%Pick a prime number $p$ such that $p$ divides $\mu_1$ and fix an algebraically closed field $\bbmk$ such that $\mathrm{char}(\bbmk)=p$. Note that  $g=1$ and $  \theta({\bm \mu})=1$ imply that $H_1(\pi_{1}^{\orb}(C_{g},{\bm \mu}),\Z)\cong \Z^2$ and $\mathrm{Hom}(\pi_{1}^{\orb}(C_{g},{\bm \mu}),\bbmk^*)=(\bbmk^*)^2$. 
%By Fox calculus presented in \cite{}, $\dim H_1(\pi_{1}^{\orb}(C_{g},{\bm \mu}), L_\rho)= \#\{1\leq j \leq s \mid p|\mu_j\}>0 $ for any rank one local system $L_\rho$ with non-trivial $\rho \in (\bbmk^*)^2$.
%Hence $\sV^1(\pi_{1}^{\orb}(C_{g},{\bm \mu}),\bbmk) =(\bbmk^*)^2$ and $\Trop_\K(\sV^1(\pi_{1}^{\orb}(C_{g},{\bm \mu}),\bbmk))=\R^2$. Then we have $\Sigma^1(\pi_{1}^{\orb}(C_{g},{\bm \mu}),\Z)=\emptyset. $

\subsection{Dwyer-Fried sets}
\label{subsec:DF_set}
%In \cite{DwyerFried}, Dwyer and Fried studied when a regular \textit{free} abelian covering of a connected finite CW complex $X$ admits finite Betti numbers. For more results on this topic, see \cite{Suciu12survey,Suciu14abelian_cover,SuciuYZ}. 

%Let us briefly recall their results here. 
Consider an epimorphism $\nu: \pi_1(X)=G\twoheadrightarrow H'$ with $H'$ abelian. Let $X^{\nu}\to X$ be the corresponding abelian covering. Consider $\bbmk$ an algebraically closed field and 
denote by $\nu^\#\colon \mathrm{Hom}(H'; \mathbbm{k}^*)\to \mathrm{Hom}(G; \bbmk^*)$ the induced morphism.
Suciu, Yang, and Zhao generalized Dwyer and Fried's results (from a free abelian quotient to an abelian quotient) in \cite[Theorem B]{SuciuYZ} as follows:
$$  \mathrm{dim}_{\mathbbm{k}}H_{\leqslant k}(X^{\nu};\mathbbm{k})< \infty \Longleftrightarrow \mathrm{im}(\nu^{\#})\cap \sV^{\leqslant k}(X; \bbmk)\text{\ is finite},$$
where $H_{\leqslant k}(X^{\nu};\mathbbm{k})=\bigoplus_{i=0}^{k}H_i(X^{\nu};\mathbbm{k})$. 
With our notations,  this conclusion can be partially 
generalized  to the integer coefficients as follows. 

\begin{proposition} \label{prop DF}
Let $\nu: G\twoheadrightarrow H'$ be the quotient from $G$ to an abelian group $H'$ with $\rank_\Z H'\geqslant 1$. 
Consider $H_i(X^{\nu}; \Z)$ as a finitely generated $\Z H'$-module induced by the deck transformation.
 Then $H_i(X^{\nu}; \Z)$  is finitely generated over $\Z$ if and only if $\Trop_\Z (\Ann (H_i(X^\nu;\Z)))\subseteq \{0\}$.
Moreover, if $\Trop_\Z (\cJ^{\leqslant k}(X; \Z)) \subseteq \{0\}$, then $H_i(X^{\nu}; \Z)$  is finitely generated over $\Z$ for any such abelian cover $\nu$ and all $0\leqslant i \leqslant k$. 
%if and only if $$\nu^{*}(\mathrm{S}(H'))\cap \Trop_\Z (\cJ^{\leqslant k}(X; \Z)) \subseteq \{0\}.$$ 
\end{proposition}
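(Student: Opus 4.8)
\textbf{Proof plan for \cref{prop DF}.}

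The plan is to reduce everything to the Sigma-invariant language of Bieri--Groves--Strebel and the jump-ideal/Alexander-ideal comparison. First I would establish the first equivalence. By \cref{thm:BS_finitely_generated_module}, the underlying abelian group of the finitely generated $\Z H'$-module $M\coloneqq H_i(X^\nu;\Z)$ is finitely generated over $\Z$ if and only if $\Sigma^c(M)=\emptyset$. By \cref{prop trop equals Sigma complement} applied to the ideal $I=\Ann(M)\subseteq \Z H'$ (which is proper as soon as $M\neq 0$; if $M=0$ the statement is trivial), we have $\mathrm{S}\big(\Trop_\Z(\Ann(M))\big)=\Sigma^c(\Z H'/\Ann(M))=\Sigma^c(M)$, where the last equality is \cref{BS 2}. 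Since $\Trop_\Z(\Ann(M))$ is a cone, it is contained in $\{0\}$ precisely when its image $\mathrm{S}\big(\Trop_\Z(\Ann(M))\big)$ on the sphere is empty. Combining, $M$ is finitely generated over $\Z$ iff $\Trop_\Z(\Ann(H_i(X^\nu;\Z)))\subseteq\{0\}$, which is the first claim. (One small point to check: $\rank_\Z H'\geqslant 1$ is exactly the hypothesis needed to invoke \cref{thm:BS_finitely_generated_module} and \cref{prop trop equals Sigma complement}; this is assumed.)

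For the moreover part I would pass through the maximal abelian cover and the functoriality of tropical varieties under quotients. Write $H=H_1(X;\Z)$ and factor $\nu\colon G\twoheadrightarrow H'$ through the abelianization $G\twoheadrightarrow H\twoheadrightarrow H'$; let $\psi\colon H\twoheadrightarrow H'$ be the induced epimorphism of abelian groups, with $\psi^*\colon \mathrm{Hom}(H';\R)\hookrightarrow\mathrm{Hom}(H;\R)$ and $\psi_*\colon \Z H\twoheadrightarrow \Z H'$. The abelian cover $X^\nu$ is the cover of $X$ associated to $\ker\nu$, and its homology $H_i(X^\nu;\Z)$ is a $\Z H'$-module. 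The hypothesis $\Trop_\Z(\cJ^{\leqslant k}(X;\Z))\subseteq\{0\}$, via \cref{prop trop equals Sigma complement} and \cref{thm PS two types of ideals} (so that $\sqrt{\cJ^{\leqslant k}(X;\Z)}=\sqrt{\Ann(H_{\leqslant k}(X^H;\Z))}$ and tropical varieties depend only on the radical), says that $\Trop_\Z\big(\Ann(H_j(X^H;\Z))\big)\subseteq\{0\}$ for every $0\leqslant j\leqslant k$, i.e. each Alexander invariant $H_j(X^H;\Z)$ of the maximal abelian cover is finitely generated over $\Z$ by the first part. The goal is then to deduce that each $H_i(X^\nu;\Z)$ is finitely generated over $\Z$ as well.

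The main obstacle is the last step: comparing the Alexander invariants of the intermediate cover $X^\nu$ with those of the maximal abelian cover $X^H$. The clean way is the Cartan--Leray (or equivariant) spectral sequence for the covering $X^H\to X^\nu$ with deck group $N\coloneqq\ker\psi=\ker\nu/[\ker\nu,\cdot]$-type quotient, namely $H_p(N;H_q(X^H;\Z))\Rightarrow H_{p+q}(X^\nu;\Z)$; alternatively, argue directly on the chain level that $C_*(X^\nu;\Z)\cong C_*(X^H;\Z)\otimes_{\Z H}\Z H'$ as complexes of $\Z H'$-modules, so $H_i(X^\nu;\Z)$ is computed from a complex of finitely generated free $\Z H'$-modules whose homology is controlled by that of $C_*(X^H;\Z)$. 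A conceptually cleaner route, avoiding spectral sequences, is to invoke the naturality built into \cref{prop functor}: for the augmentation-type ideal one checks that $\Ann(H_i(X^\nu;\Z))$ pulls back appropriately, but since Fitting ideals of a tensored complex only grow, one gets $\psi_*^{-1}\big(\Ann(H_{\leqslant k}(X^\nu;\Z))\big)\supseteq$ (something containing) $\Ann(H_{\leqslant k}(X^H;\Z))$ up to radical, whence $\Trop_\Z(\Ann(H_{\leqslant i}(X^\nu;\Z)))=\psi^*{}^{-1}$ applied to a set contained in $\psi^*(\{0\})=\{0\}$, giving $\Trop_\Z(\Ann(H_i(X^\nu;\Z)))\subseteq\{0\}$, and then the first part finishes. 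I expect the bookkeeping here --- getting from a statement about $\cJ^{\leqslant k}$ of $X$ to a statement about each individual $H_i(X^\nu;\Z)$, handling the intersection $\cJ^{\leqslant k}=\bigcap_j \cJ^j$ correctly and passing through radicals via \cref{thm PS two types of ideals} --- to be the only genuinely delicate point; the rest is a direct application of the machinery of \cref{sec:background} and \cref{sec:tropical}.
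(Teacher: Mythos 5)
Your first equivalence is exactly the paper's argument and is fine. For the moreover part you have identified the correct route (factor $\nu$ through $H$, use the chain-level identity $C_*(X^\nu;\Z)\cong C_*(X^H;\Z)\otimes_{\Z H}\Z H'$, then \cref{thm PS two types of ideals} and \cref{prop functor}), but you flag the key bookkeeping as "delicate" without resolving it, and the phrase "Fitting ideals of a tensored complex only grow" is not quite what is needed. The paper's resolution is cleaner and worth internalizing: since the boundary matrices of $C_*(X^\nu;\Z)$ are literally the images under $\psi_*\colon\Z H\twoheadrightarrow\Z H'$ of those of $C_*(X^H;\Z)$ and the free ranks $c_i$ are unchanged, one has an \emph{equality} of jump ideals $\cJ^{\leqslant k}(X,\nu;\Z)=\psi_*\big(\cJ^{\leqslant k}(X;\Z)\big)$ (Fitting ideals commute with base change), not merely an inclusion. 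Applying \cref{thm PS two types of ideals} on each side and setting $I=\sqrt{\cJ^{\leqslant k}(X;\Z)}$, $I'=\sqrt{\Ann(H_{\leqslant k}(X^\nu;\Z))}$, one gets $I'=\sqrt{\psi_*(I)}$, hence $\psi_*^{-1}(I')=\sqrt{I+K}$ where $K=\ker\psi_*$. Then \cref{prop functor} gives $\psi^*(\Trop_\Z(I'))=\Trop_\Z(I+K)\subseteq\Trop_\Z(I)\subseteq\{0\}$, and injectivity of $\psi^*$ yields $\Trop_\Z(I')\subseteq\{0\}$, so the first part (together with \cref{BS 2} to pass from $\Ann(H_{\leqslant k})$ to each $\Ann(H_i)$) finishes. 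Note also that your detour --- first translating the hypothesis into "each $H_j(X^H;\Z)$ is finitely generated over $\Z$" and then proposing a Cartan--Leray spectral sequence to transfer this to $X^\nu$ --- is unnecessary: the argument can and should stay at the level of radical annihilator ideals and their tropicalizations throughout, which is exactly what makes the pullback via \cref{prop functor} work in one step. (The spectral sequence route could also be made to work since $\ker\psi$ is a finitely generated abelian group, hence of type $FP_\infty$, but it is a genuinely different and heavier argument.)
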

\begin{proof}
%Consider $H_i(X^{\nu}; \Z)$ as a finitely generated $\Z H'$-module induced by the deck group action. 
The first claim follows directly from \cref{thm:BS_finitely_generated_module}  and \cref{prop trop equals Sigma complement}.   
For the second claim,  since $H'$ is abelian, the map $\nu$ factors through the abelianization map $G\twoheadrightarrow H$ with an epimorphism $\psi\colon H\twoheadrightarrow H'$. Then $\psi$ induces   an embedding
 $$\psi^*\colon \mathrm{Hom}(H'; \R) \hookrightarrow \mathrm{Hom}(H; \R)$$
and a ring epimorphism $\Z H \twoheadrightarrow \Z H'$. Similar to the discussion at the end of \cref{sec:tropical}, there exists an ideal $K\subseteq \Z H$ such that $\Z H/K\cong \Z H'.$ Set $I=\sqrt{\cJ^{\leqslant k}(X; \Z)} \subseteq \Z H$ and $I'=\sqrt{\Ann (H_{\leqslant k}(X^{\nu}; \Z))}\subseteq \Z H'.$
Then we claim that $I'=I\otimes_{\Z H} \Z H'$, which is the image of $I$ in the quotient ring  $\Z H'$. In fact, one can also define the jump ideal
 $\cJ^{\leqslant k}(X,\nu; \Z) $ for the cellular chain complex $C_*(X^\nu;\Z)$ as a complex of $\Z H'$-modules as in \cref{defn:jump ideal}. Then by \cref{thm PS two types of ideals} we have $\sqrt{\cJ^{\leqslant k}(X,\nu; \Z)}=I' $. On the other hand, by definition we have $\cJ^{\leqslant k}(X,\nu; \Z) = \cJ^{\leqslant k}(X; \Z) \otimes_{\Z H} \Z H' $, hence  $I' = I\otimes_{\Z H} \Z H'$.
Note that $I+K$ is the unique ideal in $\Z H$ such that $\Z H/(I+K) \cong \Z H'/I'$. Then we have  $$\psi^*(\Trop_\Z(I'))=\Trop_\Z(I+K)\subseteq \Trop_\Z(I),$$
where the first equality follows from \cref{prop functor}.
Hence $ \Trop_\Z(I)\subseteq \{0\}$ implies  $\Trop_\Z(I') \subseteq\{0\}$. Then the second claim follows from \cref{thm:BS_finitely_generated_module}  and \cref{prop trop equals Sigma complement}.   
\end{proof}
\begin{remark}  
Since $\Trop_\Z(K)=\psi^*(\mathrm{Hom}(H',\R))$, we have 
the inclusion $$\psi^*(\Trop_\Z(I'))=\Trop_\Z(I+K) \subseteq \Trop_\Z(I)\cap \Trop_\Z(K) ,$$ which could be strict. For example, if $I$ is a proper ideal in $\Z H$ and $I'=\Z H'$, then $0\in \Trop_\Z(I)$ and $\Trop_\Z(I')=\emptyset $.
This happens since tropicalization does not always respect intersections even over field coefficients: 
if $V$ and $W$ are subvarieties of $(\bbmk^*)^n$, then $\Trop_\bbmk(V \cap W)\subseteq \Trop_\bbmk(V)\cap \Trop_\bbmk(W)$, but the inclusion may be strict. 
\end{remark}

\begin{example} Consider the group $G$ as in \cref{example 1}. 
    By the Fox calculus there, we have the following %$\Z[x^\pm]$-module 
    isomorphisms
    $$G'/G''\cong  \Z[x^{\pm 1}]/(x-2) \cong \Z[\frac{1}{2}].$$
    In particular, $\Z[\frac{1}{2}] $ is not finitely generated as $\Z$-module, which correspondences to $\Trop_\Z( \Ann(G'/G''))=\{\lambda \cdot \chi \mid \lambda \in \R_{\geqslant 0}\}$. 
    On the other hand, for any field $\bbmk$,  $G'/G''\otimes_\Z \bbmk$ is finite dimensional, which correspondences to $\Trop_\bbmk( \Ann(G'/G''\otimes_\Z \bbmk))=\{0\}$. 
\end{example}

\section{K\"ahler groups}\label{sec:applications}

\subsection{General results on K\"ahler groups}
\label{subsec_K\"ahler}
Recall that a group $G$ is called a K\"ahler group if it can be realized as the fundamental group of a compact K\"ahler manifold. Using Simpson's Lefschetz theorem \cite[Theorem 1]{Simpson93}, Delzant gave a complete description of $\Sigma^1(G)$ for $G$ a K\"ahler group in \cite[Theorem 1.1]{Delzant10}. 
To explain Delzant's results, we first recall the definition of orbifold fibrations.
\begin{definition}
Let $X$ be a compact K\"ahler manifold with $\pi_1(X)=G$. A holomorphic map $f\colon X \to C_{g}$ is called an orbifold fibration if $f$ is surjective with connected fibers onto a Riemann surface $C_g$ with genus $g\geqslant 1$. Assume that $f$ has multiple fibers over the points  $\{q_1,\ldots,q_s\}$ in $C_g$ and let $\mu_j$ denote the multiplicity of  the multiple fiber $f^{*}(q_j)$  (the $\gcd$ of the coefficients of the divisor $f^* q_j$). %for the point $q_j$.  Let ${\bm \mu}=(\mu_1,\ldots,\mu_s)$.
Such an orbifold fibration is denoted  as $f\colon X \to (C_{g},{\bm \mu})$ and is called hyperbolic if  $\chi^\orb(C_g,{\bm \mu})<0$.
%of type $(g,{\bm \mu})$, %$(g,r,{\bm \mu})$,
%where ${\bm \mu}=(\mu_1,\ldots,\mu_s)$. 
\end{definition}
Two orbifold fibrations $f\colon X \to (C_{g},{\bm \mu})$ and $f'\colon X \to (C_{g'},{\bm \mu}')$  are equivalent
if there is a biholomorphic map $h\colon C_g \to C_{g'}$ which sends marked points to marked
points while preserving multiplicities.  A compact Kähler
manifold $X$ admits only finitely many equivalence classes of hyperbolic orbifold fibrations, see e.g. \cite[Theorem 2]{Delzant08},.
\begin{theorem}  [Theorem 1.1, \cite{Delzant10}] \label{thm Delzant} Let $X$ be a compact K\"ahler manifold with $\pi_1(X)=G$. Then we have

$$\Sigma^1(G;\Z) = \mathrm{S}\big( \bigcup_f \mathrm{im}(f^*\colon H^1(C_g;\R)\to H^1(X;\R) ) \big)^c,$$
where the union runs over all hyperbolic orbifold fibrations of $X$. In particular, it is a finite union. 
Moreover, $\Sigma^1(G;\Z)=\emptyset$ if and only if there exists a hyperbolic orbifold fibration $f\colon X \to C_g$ such that $f^*\colon H^1(C_g;\R)\to H^1(X;\R)  $ is an isomorphism.
\end{theorem}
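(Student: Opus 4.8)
The plan is to deduce the ``$\Sigma^1(G;\Z)=\emptyset$'' criterion from the displayed identity and then prove the identity by two inclusions. For the criterion: each $f^*\colon H^1(C_g;\R)\to H^1(X;\R)$ is injective, since an orbifold fibration has connected fibres, so $f_*\colon G=\pi_1(X)\twoheadrightarrow\pi_1^{\orb}(C_g,{\bm\mu})$ is surjective and inflation is injective on $H^1$ in degree one; and $X$ carries only finitely many equivalence classes of hyperbolic orbifold fibrations, so $\bigcup_f\mathrm{im}(f^*)$ is a finite union of linear subspaces of $H^1(X;\R)$. A real vector space is never a finite union of proper subspaces, so this union equals $H^1(X;\R)$ exactly when some $\mathrm{im}(f^*)$ is already all of it, i.e.\ when that $f^*$ is an isomorphism; by the displayed identity this happens if and only if $\Sigma^1(G;\Z)=\emptyset$. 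So it remains to prove the identity.

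For the inclusion $\mathrm{S}\big(\bigcup_f\mathrm{im}(f^*)\big)\subseteq\Sigma^1(G;\Z)^c$ (the soft half), I would fix a hyperbolic orbifold fibration $f\colon X\to(C_g,{\bm\mu})$ and put $\Gamma=\pi_1^{\orb}(C_g,{\bm\mu})$. By \cref{prop compact orbifold group}, $\Sigma^1(\Gamma;\Z)=\emptyset$, hence $\Sigma^1(\Gamma;\Z)^c=\mathrm{S}(\Gamma)=\mathrm{S}\big(H^1(C_g;\R)\big)$, the torsion of $H_1(\Gamma;\Z)$ disappearing after tensoring with $\R$. Naturality of the BNS invariant (\cref{prop naturality}) applied to $f_*\colon G\twoheadrightarrow\Gamma$ carries $\Sigma^1(\Gamma;\Z)^c$ into $\Sigma^1(G;\Z)^c$ via $f^*$, and $f^*$ on spheres is the projectivisation of $f^*\colon H^1(C_g;\R)\hookrightarrow H^1(X;\R)$, so $\mathrm{S}(\mathrm{im}(f^*))\subseteq\Sigma^1(G;\Z)^c$; taking the union over the finitely many $f$ finishes this inclusion. (One can alternatively invoke \cref{thm PS} and pull back suitable rank-one local systems, but the naturality route handles the low-genus cases uniformly.)

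The reverse inclusion $\Sigma^1(G;\Z)^c\subseteq\mathrm{S}\big(\bigcup_f\mathrm{im}(f^*)\big)$ is the substantial half, where compact Kähler geometry enters; fix $\chi\notin\Sigma^1(G;\Z)$. The plan has three steps. (1) Translate non-membership in $\Sigma^1$ into the jump ideal: one shows $\chi\in\mathrm{S}\big(\Trop_\C(\cJ^1(G;\C))\big)=\mathrm{S}\big(\Trop_\C(\sV^1(X;\C))\big)$, i.e.\ that the inclusion in \cref{main inclusion} is an \emph{equality} for a Kähler group in degree one; this is the converse of \cref{thm:compare_two_bnsr} and rests on Simpson's Lefschetz theorem \cite[Theorem 1]{Simpson93}, which guarantees that a character direction outside $\Sigma^1$ is seen by an honest rank-one local system on $X$ with nonvanishing $H^1$, forcing $\sV^1(X;\C)$ to carry a component whose tropicalisation contains $\chi$. (2) Describe $\sV^1(X;\C)$ via the Simpson-Arapura structure theorem: the positive-dimensional irreducible components of $\sV^1(X;\C)$ are exactly the torsion translates $\rho_0\cdot f^{\#}\big(\Hom(\pi_1^{\orb}(C_g,{\bm\mu});\C^*)\big)$ of subtori pulled back along hyperbolic orbifold fibrations $f\colon X\to(C_g,{\bm\mu})$, while every $0$-dimensional component is a character of finite order. (3) Tropicalise: a finite-order character tropicalises to $\{0\}$ and so drops out of $\mathrm{S}(\Trop_\C(\sV^1(X;\C)))$; translating by the torsion point $\rho_0$ does not change a tropicalisation; and the tropicalisation of $f^{\#}\big(\Hom(\Gamma;\C^*)\big)$ equals $f^*\big(H^1(C_g;\R)\big)$ because $f^{\#}$ induces $f^*$ on real-character spaces. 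Running steps (2) and (3) in both directions gives $\mathrm{S}\big(\Trop_\C(\sV^1(X;\C))\big)=\mathrm{S}\big(\bigcup_f\mathrm{im}(f^*)\big)$, and combined with (1) this places $\chi$ in $\mathrm{S}\big(\bigcup_f\mathrm{im}(f^*)\big)$.

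The main obstacle is step (1): the equality case of \cref{main inclusion} for Kähler groups. Theorem \ref{thm:compare_two_bnsr} supplies only one inclusion, and the reverse requires the non-abelian Hodge-theoretic input (Simpson's Lefschetz theorem, through the structure of $\sV^1(X;\C)$) rather than the tropical and commutative-algebra machinery of the earlier sections. A subsidiary technical point is the passage between integral and complex coefficients: for a Kähler group one must verify that the $p$-adic and mod-$p$ tropical contributions isolated in \cref{prop three trop union Z} add nothing to $\mathrm{S}\big(\Trop_\Z(\cJ^1(G;\Z))\big)^c$ beyond $\mathrm{S}\big(\Trop_\C(\cJ^1(G;\C))\big)^c$, so that Delzant's $\R$-coefficient statement agrees with its $\C$-coefficient reformulation (cf.\ \cref{remark orbifold group}).
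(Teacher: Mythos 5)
The paper does not prove this theorem: it is imported as a black box citation of \cite[Theorem~1.1]{Delzant10}, and the later Proposition~\ref{prop:abelian_bnsr_K\"ahler} is \emph{derived from} it, not the other way around. So there is no paper proof against which to compare; I can only assess your attempt on its own terms.

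Your treatment of the ``$\Sigma^1=\emptyset$'' criterion and of the easy inclusion $\mathrm{S}\big(\bigcup_f \mathrm{im}(f^*)\big)\subseteq\Sigma^1(G;\Z)^c$ is sound: a real vector space of positive dimension is not a finite union of proper subspaces, and the combination of \cref{prop compact orbifold group} (which for $\chi^{\orb}<0$ gives $\Sigma^1(\pi_1^{\orb};\Z)=\emptyset$) with \cref{prop naturality} does give the inclusion, with no hidden circularity since both of those inputs are established independently of Delzant's theorem. The gap is in the hard inclusion. Your step~(1) asserts $\Sigma^1(G;\Z)^c\subseteq\mathrm{S}\big(\Trop_\C(\sV^1(X;\C))\big)$ --- that is, that the tropical upper bound of \cref{thm:compare_two_bnsr} is an equality in degree one for Kähler groups --- and offers only ``rests on Simpson's Lefschetz theorem'' as justification. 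This cannot be obtained by citing \cref{prop:abelian_bnsr_K\"ahler}, as that would be circular, and it does not follow formally from Simpson's theorem: Simpson's Lefschetz theorem produces fibrations from isotropic sub-Hodge structures or from surjections onto surface groups with non-finitely-generated kernel, not directly from ``$\chi\notin\Sigma^1$.'' Delzant's actual argument has to pass first through Bieri--Strebel (non-membership of a rational $\chi$ means $\ker\chi$ is not finitely generated), then through a theorem of Beauville factoring the resulting surjection through a pencil, then through a rational-density argument for irrational $\chi$; only after that is Simpson's theorem applied. Steps (2) and (3) --- tropicalising the Arapura/Simpson decomposition of $\sV^1(X;\C)$ and discarding finite-order translates --- are fine, but given those, step~(1) \emph{is} the theorem, and your sketch does not actually establish it. You flag step~(1) yourself as the main obstacle; it is, and it is not overcome here.
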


A hyperbolic orbifold fibration $f\colon X \to (C_{g},{\bm \mu})$   induces an epimorphism from $G$ to the orbifold group
$$ f_*\colon G \twoheadrightarrow \pi_1^{\orb}(C_{g},{\bm \mu}).$$
Hence it induces an embedding  (see e.g. \cite[Lemma 2.13]{Suciu14abelian_cover})
$$\sV^1(\pi_{1}^{\orb}(C_{g},{\bm \mu});\bbmk) \hookrightarrow \sV^1(G;\bbmk) .$$
for any algebraically closed field coefficient $\bbmk$. Then we have $$\Trop_\bbmk (\sV^1(\pi_{1}^{\orb}(C_{g},{\bm \mu});\bbmk)) \subseteq \Trop_\bbmk(\sV^1(G;\bbmk)). $$
Hence   \cref{thm:compare_two_bnsr}, \cref{thm Delzant} and \cref{remark orbifold group} give the following observation. 
\begin{proposition}\label{prop:abelian_bnsr_K\"ahler}
Let $G$ be a K\"ahler group. Then we have $$\Sigma^1(G;\Z)=\mathrm{S}\big(\Trop_\Z( \cJ^1(G;\Z))\big)^c=\mathrm{S}\big( \bigcup_{\mathrm{char}(\bbmk)=p\geqslant 0}\Trop_{\bbmk}(\sV^{1}(G; \bbmk)) \big)^c,$$
where the last union runs over algebraically closed coefficient field $\bbmk $ with $\mathrm{char}(\bbmk)=p\geqslant 0$.
%Moreover, it is a finite union of rationally defined convex cones over polyhedrons on the sphere, %obtained via pull-back from hyperbolic Riemann orbifold. Explicitly,
%$$E^1(X;\Z)=E^1_{\mathrm{ab}}(X;\Z)=S\big[\bigcup\limits_{f}f^* H^1(R;\Z)\big]$$
%where $f$ runs over (finitely many) hyperbolic Riemann orbifold maps $f: X\to R$.
\end{proposition}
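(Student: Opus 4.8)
The plan is to squeeze $\Sigma^1(G;\Z)$ between the two tropical complements in the statement, which are already nested by \cref{thm:compare_two_bnsr}, and then to collapse the resulting chain to equalities by feeding Delzant's structure theorem for $\Sigma^1$ of Kähler groups into the orbifold computation of \cref{prop compact orbifold group}. Fix a compact Kähler manifold $X$ with $\pi_1(X)=G$; as a finite CW complex it is an admissible input for \cref{thm:compare_two_bnsr}. First I would dispose of two bookkeeping points. Since $\cJ^{\leqslant 1}=\cJ^0\cap\cJ^1$ and $\sqrt{\cJ^0(X;R)}$ is the augmentation ideal of $RH$, whose $\Trop_R$ meets the sphere $\mathrm{S}(H)$ in the empty set (by \cref{thm:BS_finitely_generated_module} and \cref{prop trop equals Sigma complement}, or by inspecting $\mathrm{in}_\chi(h-1)$), one has $\mathrm{S}(\Trop_R(\cJ^{\leqslant 1}(X;R)))=\mathrm{S}(\Trop_R(\cJ^1(G;R)))$ for $R=\Z$ and for any field; and for $\bbmk$ algebraically closed $\Trop_\bbmk(\cJ^1(G;\bbmk))=\Trop_\bbmk(\sV^1(G;\bbmk))$ since $\sqrt{\cJ^1(G;\bbmk)}$ cuts out $\sV^1(G;\bbmk)$, this set depending only on $\mathrm{char}(\bbmk)$ by the base-change invariance of tropicalization (\cref{thm fundamental trop}), so that ranging over all coefficient fields is the same as ranging over $p=0$ and the primes. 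With these identifications, \cref{thm:compare_two_bnsr} with $k=1$, its second inclusion intersected over all coefficient fields, gives
\[
\Sigma^1(G;\Z)\ \subseteq\ \mathrm{S}\big(\Trop_\Z(\cJ^1(G;\Z))\big)^c\ \subseteq\ \mathrm{S}\Big(\bigcup_{\mathrm{char}(\bbmk)=p\geqslant 0}\Trop_{\bbmk}(\sV^1(G;\bbmk))\Big)^c .
\]
It thus remains to prove the reverse inclusion between the two extremes. By Delzant's theorem \cref{thm Delzant} this amounts to showing $\bigcup_f\mathrm{im}\big(f^*\colon H^1(C_g;\R)\to H^1(X;\R)\big)\subseteq\bigcup_{\bbmk}\Trop_\bbmk(\sV^1(G;\bbmk))$, the first union being over the (finitely many) hyperbolic orbifold fibrations $f\colon X\to(C_g,{\bm \mu})$ of $X$.

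Fix one such $f$. It induces $f_*\colon G\twoheadrightarrow\pi_1^{\orb}(C_g,{\bm \mu})$, hence an epimorphism $\psi\colon H\twoheadrightarrow H_1^{\orb}:=H_1(\pi_1^{\orb}(C_g,{\bm \mu});\Z)$ on abelianizations; under the identification $H^1(C_g;\R)=\mathrm{Hom}(H_1^{\orb};\R)$ (real cohomology does not see the orbifold torsion) the map $f^*$ is $\psi^*$. Combining the embedding $\sV^1(\pi_1^{\orb}(C_g,{\bm \mu});\bbmk)\hookrightarrow\sV^1(G;\bbmk)$ recalled just before the proposition with \cref{prop functor}, one obtains for every algebraically closed $\bbmk$
\[
f^*\big(\Trop_\bbmk(\sV^1(\pi_1^{\orb}(C_g,{\bm \mu});\bbmk))\big)\ \subseteq\ \Trop_\bbmk(\sV^1(G;\bbmk)).
\]
Hence it is enough to find, for this $f$, a single algebraically closed field $\bbmk$ with $\Trop_\bbmk(\sV^1(\pi_1^{\orb}(C_g,{\bm \mu});\bbmk))=\mathrm{Hom}(H_1^{\orb};\R)$; then $\mathrm{im}(f^*)=f^*(\mathrm{Hom}(H_1^{\orb};\R))\subseteq\Trop_\bbmk(\sV^1(G;\bbmk))$, and taking the union over the finitely many $f$ closes the argument.

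This reduction is settled by \cref{prop compact orbifold group}, and I expect the genus-one case with pairwise coprime multiplicities (equivalently $\theta({\bm \mu})=1$) to be the only real obstacle. If $g>1$, or $g=1$ with $\theta({\bm \mu})>1$, then $\chi^{\orb}(C_g,{\bm \mu})<0$ and \cref{prop compact orbifold group} already gives $\Trop_{\C}(\sV^1(\pi_1^{\orb}(C_g,{\bm \mu});\C))=\R^{2g}=\mathrm{Hom}(H_1^{\orb};\R)$, so $\bbmk=\C$ works. If $g=1$ and $\theta({\bm \mu})=1$, the complex jump locus collapses to $\{1\}$ and its tropicalization is merely $\{0\}$, so complex coefficients are genuinely useless; but hyperbolicity of $f$ forces $g=1$ with at least one marked point, hence some multiplicity $\mu_j\geqslant 2$, and choosing a prime $p\mid\mu_j$ and $\bbmk=\overline{\bF_p}$ lands in the ``otherwise'' branch of \cref{prop compact orbifold group}, so $\Trop_{\overline{\bF_p}}(\sV^1(\pi_1^{\orb}(C_g,{\bm \mu});\overline{\bF_p}))=\R^{2}=\mathrm{Hom}(H_1^{\orb};\R)$. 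This is exactly the phenomenon highlighted in \cref{remark orbifold group}, namely a tropical variety that degenerates over $\C$ but is revived by a positive-characteristic valuation, and it is precisely why the integral (all-characteristics) upper bound is sharp for Kähler groups while Suciu's complex one need not be.
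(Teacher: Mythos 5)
Your proof is correct and follows the same route the paper takes—it is essentially an explicit unpacking of the paper's terse "Hence \cref{thm:compare_two_bnsr}, \cref{thm Delzant} and \cref{remark orbifold group} give the following observation." You supply the chain of inclusions from \cref{thm:compare_two_bnsr}, reduce the reverse inclusion via Delzant's structure theorem to a per-fibration statement, push tropical varieties forward along $f^*$ using \cref{prop functor}, and then invoke \cref{prop compact orbifold group} case by case, correctly isolating $g=1,\ \theta({\bm\mu})=1$ as the one situation where a positive-characteristic field (with $p\mid\mu_j$) is genuinely needed—exactly the phenomenon \cref{remark orbifold group} records.
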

Related results of this proposition are first observed by
Papadima and Suciu in \cite[Theorem 16.4]{PapaSuciu10}, later improved by Suciu in \cite[Theorem 12.2]{Suciu21}.
%\begin{proof}
%By \cref{thm:compare_two_bnsr}, we only need to show that $E^1(X;\Z)\subseteq E^1_{\mathrm{ab}}(X;\Z)$. In the K\"ahler case, according to the explanation by Suciu of Delzant's result (see \cite{Suciu21}, Theorem 12.2 \footnote{The notations are different from ours. Due to section 3.1 of \cite{Suciu21}, the tropicalization of the jump loci there is $\mathrm{Trop}_{\C,v_0}(\mathcal{J}_{\leqslant k}(C_{\bullet}(X^H;\Z)))$ in our notation.}), we have that $$E^1(X;\Z)=S[\mathrm{Trop}_{\C,v_0}(\mathcal{J}_{\leqslant 1}(C_{\bullet}(X^H;\Z)))]\subseteq S[\mathrm{Trop}_{\Z}(\mathcal{J}_{\leqslant 1}(C_{\bullet}(X^H;\Z)))]$$
%where $H$ is the abelianization of $G$.

%This means that for any $[\chi]\in E^1(X;\Z)$, we have $[\chi_H]\in S[\mathrm{Trop}_{\Z}(\mathcal{J}_{\leqslant 1}(C_{\bullet}(X^H;\Z)))]=E^1_{H}(X;\Z)$. This means by definition that $[\chi]\in E^1_{\mathrm{ab}}(X;\Z)$ and thus the required inclusion.
%\end{proof}

%\begin{remark}
%This shows the we do not get new information using $\Z$-tropicalization the BNS invariants of K\"ahler groups. %How about $\K$-coefficients?
%\end{remark}

Denote the derived series of $G$ as $G'= \mathcal{D}^1 G =[G,G]$, and $\mathcal{D}^{m+1}G=[\mathcal{D}^m G, \mathcal{D}^m G]$ for $m\geqslant 1$, and the solvable quotient as $\mathcal{R}^{m} G=G\slash \mathcal{D}^{m}G$. %In particular, $G_{\mathrm{ab}}=\mathcal{R}^1 G$.
By \cref{prop:abelian_bnsr_K\"ahler}, we have the following observation, which may give new restrictions for the K\"ahler group.
\begin{corollary}\label{cor:sigma_metabelian_quotient_K\"ahler}
Let $G$ be a K\"ahler group and $G/G''$ be its maximal metabelianization. Let $N$ be a normal subgroup of $G$ with $N\leqslant G''=\mathcal{D}^2 G$. Then we have 
$$\Sigma^1(G; \Z)=\Sigma^1(G/N; \Z)=\Sigma^1(G/G''; \Z).$$
In particular,  $\Sigma^1(G;\Z)=\Sigma^1(\mathcal{R}^m G;\Z)$ for any $m\geqslant 2$.
\end{corollary}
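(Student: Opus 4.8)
The plan is to squeeze $\Sigma^1(G/N;\Z)$ between $\Sigma^1(G;\Z)$ and $\Sigma^1(G/G'';\Z)$ by naturality, and then to collapse the squeeze by recognizing the two outer terms as the complement of one and the same tropical variety.

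First I would dispose of the group-theoretic bookkeeping. Since $N$ is normal in $G$ with $N \leqslant G'' \leqslant G'$, the groups $G$, $G/N$ and $G/G''$ all have abelianization $H$, so their character spheres are canonically identified; moreover $(G/N)' = G'/N$ and $(G/N)'' = G''/N$, hence $(G/N)/(G/N)'' \cong G/G''$ and there are surjections $G \twoheadrightarrow G/N \twoheadrightarrow G/G''$. Applying \cref{prop naturality} twice --- the two induced maps being the identity under the identification $\mathrm{S}(G) = \mathrm{S}(G/N) = \mathrm{S}(G/G'')$ --- yields
\[
\Sigma^1(G;\Z) \subseteq \Sigma^1(G/N;\Z) \subseteq \Sigma^1(G/G'';\Z),
\]
so it remains only to show $\Sigma^1(G/G'';\Z) \subseteq \Sigma^1(G;\Z)$.

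For this I would compute both of these sets as $\mathrm{S}\big(\Trop_\Z(\cJ^1(G;\Z))\big)^c$. The identity $\Sigma^1(G;\Z) = \mathrm{S}\big(\Trop_\Z(\cJ^1(G;\Z))\big)^c$ is precisely \cref{prop:abelian_bnsr_K\"ahler}, and this is the one and only place the K\"ahler hypothesis is used. For the metabelian quotient: a K\"ahler group is finitely presented, so picking a finite CW model $X$ with $\pi_1(X) = G$ makes $G'/G'' \cong H_1(X^H;\Z)$ a finitely generated $\Z H$-module; the computation of \cref{subsection comparing BGS} (Bieri--Strebel applied to $G/G''$, together with \cref{thm PS two types of ideals} and \cref{prop trop equals Sigma complement}) then gives $\Sigma^1(G/G'';\Z) = \mathrm{S}\big(\Trop_\Z(\cJ^{\leqslant 1}(X;\Z))\big)^c$, and since the degree-zero Fitting ideal $\cJ^0$ has radical the augmentation ideal --- whose tropicalization has empty image on the sphere --- and $\cJ^1$ depends only on $G/G''$, the right-hand side equals $\mathrm{S}\big(\Trop_\Z(\cJ^1(G;\Z))\big)^c$ as well. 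The two ends of the displayed chain therefore coincide, forcing all three $\Sigma^1$ to be equal.

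Finally I would deduce the ``in particular'' clause by applying the first part to $N = \mathcal{D}^m G$ with $m \geqslant 2$: this subgroup is characteristic (hence normal) in $G$ and lies in $\mathcal{D}^2 G = G''$, so $\Sigma^1(G;\Z) = \Sigma^1(G/\mathcal{D}^m G;\Z) = \Sigma^1(\mathcal{R}^m G;\Z)$. The step I expect to require real care is the middle one --- checking that the metabelian description of \cref{subsection comparing BGS} is legitimately available for the (not-necessarily-finitely-presented) group $G/G''$; this rests entirely on $G$ being finitely presented, so that its first Alexander invariant is a finitely generated $\Z H$-module, after which everything reduces to formal manipulation of the inclusions above.
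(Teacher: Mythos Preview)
Your proof is correct and follows essentially the same route as the paper's. You sandwich $\Sigma^1(G/N;\Z)$ first and then collapse the endpoints, while the paper first proves $\Sigma^1(G;\Z)=\Sigma^1(G/G'';\Z)$ and invokes naturality for $G/N$ at the very end, but the ingredients are identical: \cref{prop naturality} for the inclusions, the fact that $\sqrt{\cJ^1(G;\Z)}$ depends only on $G/G''$, the metabelian identification from \cref{subsection comparing BGS}, and \cref{prop:abelian_bnsr_K\"ahler} for the K\"ahler input. Your worry about $G/G''$ possibly failing to be finitely presented is well placed but, as you note, harmless---the argument of \cref{subsection comparing BGS} only needs $G$ finitely presented so that $G'/G''$ is a finitely generated $\Z H$-module.
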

\begin{proof}
%By \cref{prop naturality}, any group epimorphism $G\twoheadrightarrow Q$ induces an embedding %$$\Sigma^1(Q;\Z)^c\hookrightarrow \Sigma^1(G;\Z)^c.$$
Since $\mathrm{S}(G\slash G'')=\mathrm{S}(G)$, the naruality property in \cref{prop naturality} gives us $$\Sigma^1(G;\Z)\subseteq 
\Sigma^1(G/G'';\Z).$$

Since $\Trop_\Z (\cJ^1(G; \Z))$ depends only on
$\sqrt{\cJ^1(G; \Z)}$  and   $\sqrt{\cJ^1(G; \Z)}$ depends only on $G/G''$ (see e.g. \cite[Section 2]{Suciu14abelian_cover}), we have 
$$ \Trop_\Z (\cJ^1(G; \Z))=\Trop_\Z (\cJ^1(G/G''; \Z)).$$
On the other hand, for metabelian group $G/G''$,  by \cref{thm:compare_two_bnsr} we have %or \cref{subsection comparing BGS} that 
$$\Sigma^1(G/G''; \Z)\subseteq \mathrm{S}\big(\Trop_\Z (\cJ^1(G/G'';\Z))\big)^c.$$
Putting all together, we get that 
$$\Sigma^1(G; \Z) \subseteq \Sigma^1(G/G'';\Z)\subseteq \mathrm{S}\big(\Trop_\Z (\cJ^1(G/G'';\Z))\big)^c= \mathrm{S}\big(\Trop_\Z (\cJ^1(G;\Z))\big)^c.$$
Then the claim follows from  \cref{prop:abelian_bnsr_K\"ahler} and \cref{prop naturality}. %The result for solvable quotients are proved similarly.
\end{proof}
%\cref{thm:compare_two_bnsr}. 
%Then we get
%$$E^1_{\mathrm{ab}}(Q;\Z)\subseteq E^1(Q;\Z)\subseteq E^1(G;\Z)=E^1_{\mathrm{ab}}(G;\Z)$$
%with the last equality from \cref{prop:abelian_bnsr_K\"ahler}.

%Also, $E^1_{\mathrm{ab}}(Q;\R)=E^1_{\mathrm{ab}}(G;\R)$ since $G$ and $Q$ have the same abelianization. Thus we have
%$$E^1(Q;\R) = E^1(G;\R).$$
%The remaining conclusion for $N$ is immediate.

\begin{proof}[Proof of \cref{prop K\"ahler}]
To simplify the notations, set $Q=G/G''$ and $Q'=G'/G''$.
We prove the proposition by four steps.

\noindent Step 1: We first prove the equivalence of $(i)-(iv)$.  

\noindent $(i)\iff (ii)$ and $(iii)\iff (iv) $: Both follow from \cref{thm BNS}.

\noindent $(i)\iff (iii)$: It follows from \cref{cor:sigma_metabelian_quotient_K\"ahler} and $\mathrm{S}(G)=\mathrm{S}(Q)$.

\noindent Step 2: We prove  $(iv)\Rightarrow (v)\Rightarrow (vi) \Rightarrow (iii)$.

\noindent $(iv)\Rightarrow (v)$: $Q$ is an extension of $Q'$  by $G/G'$. By the assumption, both $Q'$ and $G/G'$ are finitely generated abelian groups, particularly polycyclic. Then by \cite[Proposition 13.73(5)]{DK}, $Q$ is polycyclic.

\noindent $(v)\Rightarrow (vi)$: By \cite[Proposition 13.84]{DK}, every finitely generated polycyclic group is  finitely presented.

\noindent $(vi)\Rightarrow (iii)$:
Note that $Q$ is a finitely generated metabelian group. By a nice theorem due to Birei and Strebel \cite[Theorem A]{BS80}, we have that 
    \begin{center}
        $Q$ is finitely presented if and only if $\Sigma^1(Q;\Z) \cup -\Sigma^1(Q;\Z)=S(Q).$
    \end{center} 
Note that by \cref{cor:sigma_metabelian_quotient_K\"ahler} and \cref{thm Delzant}, we have $\Sigma^1(Q;\Z)=-\Sigma^1(Q;\Z)$. Then the claim follows. 

\noindent Step 3: We prove  $(iv)\Rightarrow  (vii)\Rightarrow (vi)$. 

%\noindent $(v)\Rightarrow (vi)$: It follows from the fact that finitely generated polycyclic group is finitely presented, see \cite[Proposition 13.84]{DK}. 

\noindent $(iv)\Rightarrow (vii)$: If $Q'$ is finitely generated,
it follows from \cite[Lemme 3.1]{Delzant10} or \cite[Corollary 3.6]{Burger} that  $Q$ is virtually nilpotent. %The proof uses the fact  that if $\dim_\C \sV^1(G; \C)=0$, then $\sV^1(G; \C)$ consists of finitely many torsion points. 
%\noindent $(i)\Rightarrow (vi)$: this is a theorem of Beauville \cite{Beauville}, see also Theorem 3.1 of \cite{Delzant10}.

\noindent $(vii)\Rightarrow (vi)$: By assumption, there exists a normal subgroup $N$ of $Q$ with a short exact sequence 
$$1\to N \overset{q}{\to} Q \to Q/N\to 1,$$ where $N$ is nilpotent and $Q/N$ is finite. 
Since $Q$ is finitely generated, so is $N$ by \cite[Lemma 7.85]{DK}. Note that finitely generated nilpotent groups and finite groups are both finitely presented, hence $Q$ is also finitely presented by \cite[Proposition 7.30]{DK}.
 
\noindent Step 4: Finally we show that  $(iv)\Rightarrow  (viii)\Rightarrow (ix)\Rightarrow (iii)$. 

\noindent $(iv)\Rightarrow  (viii)$: It is obvious.

\noindent $(viii)\Rightarrow (ix)$:  Let $\bbmk$ an algebraically closed field.  By a fact in commutative algebra (see e.g. \cite[Proposition 9.3]{SuciuYZ}), $Q'\otimes_\Z\bbmk$ being finite-dimensional implies that the variety of the ideal $\Ann (Q'\otimes_\Z \bbmk)$ consists of at most finitely many points. Note that $Q'\otimes_\Z\bbmk$ is the first Alexander invariant of $G$ with $\bbmk$-coefficient. Then by \cref{thm PS two types of ideals}, $\sV^1(G;\bbmk)$ consists of  finitely many points.

\noindent $(ix)\Rightarrow (iii)$: 
Note that \cref{thm structure trop} shows that taking tropicalization preserves dimension over field coefficients and $ \Trop_\K(\sV^1(G;\bbmk))$ is homogeneous with respect to scalar multiplication by a positive real number. 
If $\sV^1(G;\bbmk)$ consists of finitely many points, we have $\Trop_\bbmk(\sV^1(G;\bbmk))\subseteq \{0\}$, 
hence  $\Trop_\Z(\cJ^1(Q,\Z))\subseteq \{0\}$ due to \cref{prop:abelian_bnsr_K\"ahler}. Therefore,  by the discussion in \cref{subsection comparing BGS}, we have $\mathrm{S}(Q)=\Sigma^{1}(Q;\Z)$. %Then $Q'$ is finitely generated as an abelian group due to \cref{thm BNS}.
\end{proof}
 %The directions  $(ii)\Rightarrow (iii)$ and  $(iii)\Rightarrow (iv)$ are obvious.  

%gives $ $. %, which is equivalent to $(iii)$. The converse is obvious.
%By \cref{thm:BS_finitely_generated_module}, $G'/G''$ is a 

%\noindent $(iv)\iff (v)$: Note that $G'/G''\otimes_\Z\C$ is the first Alexander module with complex coefficients. Then the claim follows from \cref{thm PS two types of ideals} and \cref{prop DF}.

%this is the content of of ($K^{-})$ in \cite{Arapura}. Note that the curve-dominating property there is equivalent to that $\Sigma^1(G;\Z)\neq \mathrm{S}(G)$.

%Now we have proved the equivalence of (i)-(v).

%$(iii)\Rightarrow (v)$: this is proposition 4.5 in \cite{Beauville}. 

%\noindent $(vi)\Rightarrow (v)$: By assumption, there exists a  normal subgroup $N$ of $G$ with a short exact sequence 
%$$1\to N \overset{q}{\to} G \to G/N\to 1,$$
% where $N$ is nilpotent and $G/N$ is finite. 
%Let $L$ be a rank one complex local system of $G$ such that  $H_1(G; L)\neq 0$. Then $H_1(N; q^*L)\neq 0$. In fact, as shown in the proof of \cite[Propositon 2.1]{DP}, $$ H_1(G,L)=H_1(N,q^*L)_{G/N},$$ where $(\cdot)_{G/N} $ denotes coinvariants. For the nilpotent group $N$,  M\~acinic and Papadima showed in \cite{MP} that $ \sV^1(N,\C)$  contains only one point, the constant sheaf, hence $q^*L $ has to be the constant sheaf. Therefore, $L$ is indeed a local system of $G/N$. Since $G/N$ is finite, $G/N$ has only finitely many distinct rank one complex local systems. It follows that $\sV^1(G,\C)$ consists of only finitely many points.

\subsection{Weighted right-angled Artin group} \label{subsec WRAAG}
In this subsection, we classify the K\"ahler weighted right-angled Artin groups. 

\begin{example} \label{example two graphs}
    Let $\Gamma_\ell= (V, E, \ell)$ and $\Gamma'_{\ell'}= (V', E', \ell')$ be two labeled graphs. Denote by $\Gamma_\ell \sqcup \Gamma'_{\ell'} $ their disjoint union. Denote by $\Gamma_\ell * \Gamma'_{\ell'}$ their join, with vertex set $V\sqcup V'$, edge set $E\sqcup E' \sqcup \{\{a,a'\}|a\in V, a'\in V'\}$ and  weight 1 on all the joining edges $\{a,a'\}$.
 Then we have 
 \begin{center}
      $G_{\Gamma_\ell \sqcup \Gamma'_{\ell'}}=G_{\Gamma_\ell} * G_{\Gamma'_{\ell'}} $ and  $G_{\Gamma_\ell * \Gamma'_{\ell'}}=G_{\Gamma_\ell} \times G_{\Gamma'_{\ell'}} .$
 \end{center}
 \end{example}

For the edge weighted graph $\Gamma_\ell$, if we forget the weight on the edges, we get the finite simple graph $\Gamma$.  Let $G_\Gamma$ denote the corresponding right-angled Artin group. %, which can be defined as $G_{\Gamma_\ell}$ with all edges having weight 1.
The jump loci of $G_\Gamma$ are completely characterized by Dimca, Papadima, and Suciu as follows (see also \cite[Theorem 5.5, Corollary 5.6]{PS06} for the corresponding results about resonance varieties). 

\begin{theorem}[\cite{DPS}, Proposition 11.5] Let $\Gamma$ be a finite simple graph and let $G_\Gamma$ denote the corresponding right-angled Artin group. Then we have 
$$\sV^1(G_\Gamma; \mathbb{C}) = \bigcup_{\substack{W \subseteq V \\ \Gamma_W \text{ is maximally disconnected}}} \mathbb{T}_W,
$$
where the union is taken over all subsets $W \subseteq V$ such that the subgraph $\Gamma_W$ is maximally disconnected, i.e. $\Gamma_{W}$ is disconnected and there is no disconnected subgraph of $\Gamma$ strictly containing $\Gamma_{W}$. Here for $W\subseteq V$, $\mathbb{T}_W \subseteq \mathbb{T}_V=(\C^*)^{|V|}$ is given by
$$ \mathbb{T}_W =\{ (x_a)_{a\in V}\in (\C^*)^{|V|} \mid x_a=1 \text{ for } a \notin W\}.$$
    In particular, $\sV^1( G_\Gamma;\C)=(\C^*)^{|V|} $ if and only if $\Gamma$ is disconnected.
\end{theorem}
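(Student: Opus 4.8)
The plan is to compute $\sV^1(G_\Gamma;\C)$ directly from the standard presentation
$G_\Gamma=\langle v_1,\dots,v_n\mid[v_i,v_j]=1,\ \{i,j\}\in E\rangle$ (with $n=|V|$) by Fox calculus, and then match the answer with the right-hand side by an elementary graph-theoretic lemma. First I would record the Fox Jacobian: abelianizing, the Fox derivative of the relator $[v_i,v_j]$ with respect to $v_k$ equals $1-t_j$, $\,t_i-1$, or $0$ according as $k=i$, $k=j$, or $k\notin\{i,j\}$, where $t_k$ is the image of $v_k$ in $\Z H$. Evaluating at a character $\rho=(t_1,\dots,t_n)\in(\C^*)^n$ and using that the cellular chain complex of the presentation $2$-complex of $G_\Gamma$ computes $H_i(G_\Gamma;L_\rho)$ for $i\le1$, one gets: $\rho=1$ always lies in $\sV^1(G_\Gamma;\C)$ (as $H_1(G_\Gamma;\C)=\C^{n}\neq0$ since $V\neq\emptyset$), while for $\rho\neq1$, setting
$$N_\rho:=\big\{\,x=(x_k)_{k\in V}\in\C^{V}\ :\ (t_i-1)\,x_j=(t_j-1)\,x_i\ \text{ for every edge }\{i,j\}\,\big\},$$
which is the kernel of the evaluated, abelianized Fox Jacobian, one has $\dim_\C H_1(G_\Gamma;L_\rho)=\dim_\C N_\rho-1$; hence $\rho\in\sV^1(G_\Gamma;\C)$ iff $\dim_\C N_\rho\ge2$. (When $\Gamma$ is complete this recovers $G_\Gamma\cong\Z^{n}$ with $\sV^1(G_\Gamma;\C)=\{1\}$, matching the statement once one declares $W=\emptyset$ maximally disconnected in that case, so that $\mathbb{T}_\emptyset=\{1\}$.)

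The heart of the argument is the combinatorial evaluation of $\dim_\C N_\rho$ for $\rho\neq1$. Write $U=\{k\in V:t_k\neq1\}$, a nonempty set. Inspecting the defining relations of $N_\rho$ one edge at a time, according to how the endpoints of $\{i,j\}$ meet $U$, one sees: a mixed edge forces the endpoint outside $U$ to have coordinate $0$; an edge inside $U$ gives $x_i/(t_i-1)=x_j/(t_j-1)$ (legitimate since $t_i-1,t_j-1\neq0$); and an edge outside $U$ imposes nothing. So $x\in N_\rho$ precisely when $x$ vanishes on the vertices outside $U$ having a neighbour in $U$, is arbitrary on the set $C$ of vertices outside $U$ having no neighbour in $U$, and on $U$ has the form $x_i=(t_i-1)y_i$ with $y$ constant on each connected component of the induced subgraph $\Gamma_U$. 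Therefore $\dim_\C N_\rho=c(\Gamma_U)+|C|$, where $c(\Gamma_U)$ is the number of connected components of $\Gamma_U$, and consequently, for $\rho\neq1$ with support $U$, $\rho\in\sV^1(G_\Gamma;\C)$ iff $c(\Gamma_U)+|C|\ge2$.

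It then remains to identify $\{\rho\neq1:c(\Gamma_{\mathrm{supp}(\rho)})+|C|\ge2\}$ with $\bigcup_W\mathbb{T}_W$ over maximally disconnected $W$. Since $\rho\in\mathbb{T}_W\iff\mathrm{supp}(\rho)\subseteq W$, and every disconnected induced subgraph sits inside a maximally disconnected one, this comes down to: \emph{for nonempty $U\subseteq V$, $c(\Gamma_U)+|C|\ge2$ if and only if some $W\supseteq U$ has $\Gamma_W$ disconnected.} For ``only if'': if $c(\Gamma_U)\ge2$ take $W=U$; if $\Gamma_U$ is connected then $|C|\ge1$, and for any $c_0\in C$ the graph $\Gamma_{U\cup\{c_0\}}=\Gamma_U\sqcup\{c_0\}$ is disconnected. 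For ``if'': if $\Gamma_W$ is disconnected with $W\supseteq U$ and $\Gamma_U$ is connected, any vertex of $W$ in a component of $\Gamma_W$ other than the one containing $U$ lies outside $U$ and has no neighbour in $U$, so it lies in $C$ and $|C|\ge1$. This proves $\sV^1(G_\Gamma;\C)=\bigcup_W\mathbb{T}_W$. The ``in particular'' follows on taking $\rho$ of full support $V$: such $\rho$ lies in $\sV^1(G_\Gamma;\C)$ iff $\Gamma_V=\Gamma$ itself is disconnected, and when $\Gamma$ is disconnected $V$ is (trivially) maximally disconnected, so $\mathbb{T}_V=(\C^*)^{n}\subseteq\sV^1(G_\Gamma;\C)$; hence $\sV^1(G_\Gamma;\C)=(\C^*)^{n}$ exactly when $\Gamma$ is disconnected.

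The step I expect to be the main obstacle is the case analysis behind $\dim_\C N_\rho=c(\Gamma_U)+|C|$: one must separate the edge relations carefully by how each edge meets $U$ and notice that the rescaling $y_i=x_i/(t_i-1)$ on $U$ turns the relations inside $\Gamma_U$ into a locally constant condition, while mixed edges annihilate the neighbours of $U$ and edges outside $U$ are inert. I will also note two alternatives. One may first compute the resonance variety of $G_\Gamma$---it is the union of the coordinate subspaces $\C^{W}$ over maximally disconnected $W$, a combinatorial consequence of the exterior Stanley--Reisner presentation of $H^{*}(G_\Gamma;\C)$---and then invoke the tangent cone theorem together with $1$-formality of right-angled Artin groups, which is the route of \cite{PS06,DPS}; this still needs the fact that each component of $\sV^1(G_\Gamma;\C)$ is a coordinate subtorus through the identity, essentially what the computation above provides. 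Alternatively one can induct on $|V|$ via $G_{\Gamma\sqcup\Gamma'}=G_\Gamma*G_{\Gamma'}$ and $G_{\Gamma*\Gamma'}=G_\Gamma\times G_{\Gamma'}$ (a special case of \cref{example two graphs}) with the Mayer--Vietoris and Künneth formulas for the characteristic variety of a free product and of a product, but the case ``$\Gamma$ connected and not a join'' does not reduce and still requires the direct argument.
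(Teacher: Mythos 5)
The paper does not prove this statement itself; it simply cites \cite[Proposition~11.5]{DPS}. Your computation is therefore being compared against the proof in \cite{DPS}, which proceeds via the resonance variety of $G_\Gamma$ (computed from the exterior Stanley--Reisner presentation of $H^*(G_\Gamma;\C)$ in \cite{PS06}) together with $1$-formality of right-angled Artin groups and the tangent cone theorem. Your argument is genuinely different and more elementary: a hands-on Fox-calculus rank computation.

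Your proof is correct. The Fox Jacobian of the commutator presentation is recorded correctly; for $\rho\neq 1$ the identity $\dim H_1(G_\Gamma;L_\rho)=\dim N_\rho-1$ holds because the presentation $2$-complex gives a partial free resolution of $\Z$ over $\Z G_\Gamma$, so its $H_1$ with local coefficients computes group homology, and $\ker\partial_1$ has dimension $n-1$ exactly when some $t_i\neq 1$. The dimension count $\dim N_\rho=c(\Gamma_U)+|C|$ is right once one separates edges by how their endpoints meet $U=\mathrm{supp}(\rho)$: mixed edges kill the coordinate outside $U$, edges inside $U$ become the locally-constant condition after rescaling by $t_i-1$, and edges outside $U$ are inert. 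The combinatorial lemma identifying $\{U\neq\emptyset:c(\Gamma_U)+|C|\geqslant 2\}$ with the subsets of disconnected induced subgraphs is also correct in both directions, using that $\Gamma_W$ is an \emph{induced} subgraph so that ``no neighbour in $U$ within $\Gamma_W$'' is the same as ``no neighbour in $U$ within $\Gamma$'' once $U\subseteq W$. Together with the observation that every disconnected induced subgraph sits inside a maximally disconnected one, this gives the asserted union of subtori. The only fragile point is the edge case $\Gamma$ complete, where the right-hand side is nonempty only under the convention that $W=\emptyset$ counts as maximally disconnected (so that $\mathbb{T}_\emptyset=\{1\}$); you flag this explicitly and it is a matter of convention in the cited statement rather than a defect of your proof.

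In comparison to \cite{DPS}: your route buys elementarity (no resonance variety, no formality, no tangent cone theorem) at the modest cost of the case analysis behind $\dim N_\rho=c(\Gamma_U)+|C|$; the \cite{DPS} route buys conceptual clarity and generality (it explains \emph{why} the components must be coordinate subtori through the identity, as a consequence of $1$-formality) at the cost of the heavier machinery. Your remark about the join/free-product recursion is also accurate: \cref{example two graphs} lets one reduce disconnected $\Gamma$ and join graphs, but it does not dispose of connected non-join graphs, so the direct computation (or the resonance argument) cannot be entirely avoided.
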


%Note that $H_1(G_{\Gamma_\ell};\Z) =\Z^{|V|}$. 
By an observation, we show that $\sV^1(G_{\Gamma_\ell};\C)$ is same as $\sV^1( G_\Gamma;\C)$.
\begin{corollary} \label{cor disconnected}
Let $G_{\Gamma_\ell}$ denote the weighted right-angled Artin group associated to an edge weighted graph $\Gamma_\ell$. 
Let $G_{\Gamma}$ denote the right-angled Artin group associated to the corresponding simple graph $\Gamma$ by forgetting the weights on edges. 
%Note that $H_1(G_{\Gamma_\ell};\Z) =\Z^{|V|}$. 
There is a natural isomorphism $H_1(G_{\Gamma_\ell};\Z) \cong H_1(G_{\Gamma};\Z) $, which induces an isomorphism $\mathrm{Hom}(G_{\Gamma_\ell},\C^*) \cong \mathrm{Hom}(G_{\Gamma},\C^*) $. 
Under this isomorphism, we have 
    $$ \sV^1(G_{\Gamma_\ell};\C)\cong\sV^1( G_\Gamma;\C). $$
     In particular, $\sV^1(G_{\Gamma_\ell};\C)=(\C^*)^{|V|} $ if and only if $\Gamma_\ell$ is disconnected.
\end{corollary}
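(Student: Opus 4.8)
The plan is to reduce the whole statement to a short Fox-calculus computation comparing the standard presentation of $G_{\Gamma_\ell}$ from \cref{def:weighted RAAG} with that of the unweighted right-angled Artin group $G_\Gamma$ (the case $\ell\equiv 1$). First I would record the easy part: every defining relator $[a_i,a_j]^{\ell(e)}$ lies in the commutator subgroup, so it becomes trivial upon abelianization; hence both $H_1(G_{\Gamma_\ell};\Z)$ and $H_1(G_\Gamma;\Z)$ are canonically the free abelian group on $V$, and the identity map on $V$ furnishes the natural isomorphism $H_1(G_{\Gamma_\ell};\Z)\cong H_1(G_\Gamma;\Z)$, whence $\mathrm{Hom}(G_{\Gamma_\ell},\C^*)\cong\mathrm{Hom}(G_\Gamma,\C^*)$. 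Write $H$ for this common group, $\phi\colon\Z G_{\Gamma_\ell}\to\Z H$ for the abelianization ring map, and $x_a$ ($a\in V$) for the corresponding variables in $\C H$.

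Next I would compute the abelianized Fox Jacobian of the weighted presentation. Fix an edge $e=\{a_i,a_j\}$ and set $c_e=[a_i,a_j]=a_ia_ja_i^{-1}a_j^{-1}$, so the relator is $r_e=c_e^{\ell(e)}$. The product rule for Fox derivatives gives $\partial r_e/\partial a=\bigl(1+c_e+\cdots+c_e^{\ell(e)-1}\bigr)\,\partial c_e/\partial a$ for every generator $a$. Since $\phi(c_e)=1$, applying $\phi$ turns the first factor into the constant $\ell(e)$; and a direct computation yields $\phi(\partial c_e/\partial a_i)=1-x_j$, $\phi(\partial c_e/\partial a_j)=x_i-1$, and $\phi(\partial c_e/\partial a)=0$ for $a\notin\{a_i,a_j\}$. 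Hence the row of the abelianized Fox Jacobian of $G_{\Gamma_\ell}$ indexed by $e$ equals $\ell(e)$ times the corresponding row for the right-angled Artin relator $[a_i,a_j]$ of $G_\Gamma$, while the augmentation part $(x_a-1)_{a\in V}$ is literally the same for the two groups.

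Then I would conclude using \cref{defn:jump ideal}: $\cJ^1(-;\C)$ is a Fitting (maximal-minor) ideal of the matrix assembled from the abelianized Fox Jacobian together with the augmentation column, and it may be computed from any finite presentation. Passing from $G_\Gamma$ to $G_{\Gamma_\ell}$ only multiplies the rows indexed by the edges by the positive integers $\ell(e)$, which are units in $\C$ and hence in $\C H$; multiplying rows of a matrix over $\C H$ by units multiplies each maximal minor by a unit, so the ideal they generate is unchanged. (Equivalently, over $\C$ this row scaling does not alter the image of $\partial_1$, so the first Alexander module $H_1(X^H;\C)$ is the same $\C H$-module for the two groups.) Thus $\cJ^1(G_{\Gamma_\ell};\C)=\cJ^1(G_\Gamma;\C)$ under the identification of the first step, and taking zero loci (\cref{def homology jump loci}) gives $\sV^1(G_{\Gamma_\ell};\C)\cong\sV^1(G_\Gamma;\C)$. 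The final assertion follows by combining this with the theorem of Dimca, Papadima and Suciu recalled just above [\cite{DPS}, Proposition 11.5]: $\sV^1(G_\Gamma;\C)=(\C^*)^{|V|}$ iff $\Gamma$ is disconnected, and $\Gamma$ is disconnected iff $\Gamma_\ell$ is, since $\Gamma$ and $\Gamma_\ell$ have the same vertex and edge sets and connectivity ignores the edge weights.

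The computation itself is entirely routine; the only delicate point — and the one I expect to need care — is that the argument uses the invertibility of the weights $\ell(e)$ in the coefficient field, which is exactly why the statement is over $\C$. In characteristic $p$ dividing some $\ell(e)$ the scaled row becomes zero and the jump ideal can genuinely change, in line with the characteristic-sensitive behaviour already visible in \cref{prop compact orbifold group}; I would flag this hypothesis explicitly rather than attempt to relax it.
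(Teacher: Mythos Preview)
Your proposal is correct and follows essentially the same approach as the paper: both compute the abelianized Fox derivatives of the relator $[a_i,a_j]^{\ell(e)}$, observe that the resulting row/column is $\ell(e)$ times that of the unweighted commutator, and conclude that over $\C$ the Fitting ideals coincide because the weights are units. Your write-up is slightly more detailed (you spell out the product rule, the $H_1$ identification, and the role of the augmentation map in $\partial_1\oplus\partial_0$), and your closing remark about why the argument fails in characteristic dividing some $\ell(e)$ is a worthwhile addition.
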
 
\begin{proof}
    The degree one jump loci can be computed by the Alexander matrix given by the Fox calculus. For an edge  with weight $m$ in $G_{\Gamma_\ell}$ connecting vertices  $a$ and $b$, we have the relation $[a,b]^m=1$ in $G_{\Gamma_\ell}$. 
    The abelianization of the Fox derivative of this relation gives $$\begin{pmatrix}
  m(1-b)  & m(a-1) & 0 & \cdots & 0
\end{pmatrix}^T,$$
 which is  a column in the Alexander matrix. 
    Comparing with the one given by the relation $[a,b]=1$ in $G_\Gamma$, they only differs by a multiplication by the nonzero integer $m$. 
    So the Alexander matrix for $G_{\Gamma_\ell}$ can be obtained from the one for $G_\Gamma$ by multiplying the proper (nonzero) weight for the corresponding column. Since we are computing $\sV^1(G_{\Gamma_\ell};\C)$ and any nonzero integer is a unit in $\C$, we get the same fitting ideals for $G_{\Gamma_\ell}$ and $G_\Gamma$, hence the claim follows.
\end{proof}
Now we are ready to prove \cref{thm WRAAG}.

\begin{proof}[Proof of \cref{thm WRAAG}]
  By \cref{example two graphs},  $(ii) \Rightarrow (iii) $ is easy.  
Note that $\langle a_1,a_2 | [a_1,a_2]^m=1 \rangle$ is a K\"ahler group, see e.g. \cite{Ue}. 
Hence so is a finite product of such groups. Then $(iii)\Rightarrow (i)$ follows. 
    We are left to prove $(i)\Rightarrow (ii)$ by two steps. From now on, we assume that $G_{\Gamma_\ell}$ is a K\"ahler group. 
    
    \noindent Step 1: We first show that  $\Gamma_\ell$ has to be a complete graph on an even number of vertices by mimicking the proof presented by
Dimca, Papadima and Suciu in \cite[Theorem 11.7]{DPS}.
    
    Assume that $\Gamma_\ell$ is not a complete graph. Then there exists $W\subseteq V$ such that the subgraph $\Gamma_{W,\ell}$ is maximally disconnected.
Write $W=W_1\sqcup W_2$ with both $W_1 $ and $W_2$ nonempty and no edge connecting $W_1$ and $W_2$. Then  $\Gamma_{W,\ell}=\Gamma_{W_1,\ell}\sqcup \Gamma_{W_2,\ell}$ and so
$G_{\Gamma_{W,\ell}}=G_{\Gamma_{W_1,\ell}}* G_{\Gamma_{W_2,\ell}}.$
By \cite[Proposition 13.3]{PapaSuciu10} we have $\sV^1(G_{\Gamma_{W,\ell}};\C)=(\C^*)^{|W|}$. Moreover, the natural group epimorphism $$G_{\Gamma_{\ell}}\twoheadrightarrow G_{\Gamma_{W,\ell}}$$ 
    gives two embeddings 
   \begin{center}
       $\sV^1(G_{\Gamma_{W,\ell}};\C)\hookrightarrow \sV^1(G_{\Gamma_{\ell}};\C)$
   and $H^1(G_{\Gamma_{W,\ell}};\C)\hookrightarrow H^1(G_{\Gamma_{\ell}};\C) .$
   \end{center} 
By  \cref{cor disconnected}, the image of the first embedding gives an irreducible component of $\sV^1(G_{\Gamma_{\ell}};\C)$. Meanwhile, the image of the 
second embedding is the tangent space of this irreducible component. 
As shown by Dimca, Papadima and Suciu in \cite[Theorem C]{DPS}, such  tangent space for K\"ahler group is a 1-isotropic space, i.e.,  $$H^1(G_{\Gamma_{W,\ell}};\C)\wedge  H^1(G_{\Gamma_{W,\ell}};\C) \to H^2(G_{\Gamma_{\ell}};\C)$$ has a 1-dimensional image and it is a non-degenerate skew-symmetric bilinear form. Since $G_{\Gamma_{W,\ell}}=G_{\Gamma_{W_1,\ell}}* G_{\Gamma_{W_2,\ell}}$, \cite[Lemma 9.4]{DPS} gives that $H^1(G_{\Gamma_{W,\ell}};\C)\wedge  H^1(G_{\Gamma_{W,\ell}};\C) =0$.
Hence we get a contradiction.

    \noindent Step 2: Since $\Gamma_\ell$ is a complete graph,  by \cref{cor disconnected} we get that $\sV^1(G_{\Gamma_\ell};\C)=\{1\}$. 
    Let $X$ be a compact K\"ahler manifold with $\pi_1(X)=G_{\Gamma_\ell}$. 
    By \cref{thm Delzant} and \cref{prop compact orbifold group}, there is no orbifold fibration  $f\colon X\to C_g$ with $g>1$. 
        By  \cref{prop:abelian_bnsr_K\"ahler} and  \cref{thm Delzant}, we have $$\Trop_\Z (\cJ^1(G_{\Gamma_\ell};\Z))=\bigcup_f \mathrm{im} f^* (H^1(C_1;\R) \hookrightarrow H^1(X; \R)) ,$$ 
   where the union runs over all hyperbolic orbifold fibrations $f\colon X\to C_1$ with $C_1$ a compact Riemann surface of  genus 1. Hence it is a finite union of two dimensional real vector spaces. %Moreover, we claim that the intersection of any two
%So we only need to focus on orbifold fibration  $f\colon X\to C_1$. 
    
    For two orbifold fibrations $f\colon X \to C_1$ and $g\colon X \to C'_1$, %consider     $f^*\colon H^1(C_1;\R) \hookrightarrow H^1(X; \R) $ and $g^*\colon H^1(C'_1;\R) \hookrightarrow H^1(X; \R) $. Then 
    we claim that either $\mathrm{im} f^* =\mathrm{im} g^* $
    or $  \mathrm{im} f^* \cap  \mathrm{im} g^* =\{0\}$. In fact, since $f$ and $g$ are both holomorphic maps between compact K\"ahler manifolds, both $ H^1(C_1;\R) $ and $H^1(C'_1;\R) $ carry sub-Hodge structure of $H^1(X; \R)$. Then so is $ H^1(C_1;\R)\cap H^1(C'_1;\R) $. Hence  $H^1(C_1;\R)\cap H^1(C'_1;\R) $ has dimension either 0 or 2.

    Assume that $ \Gamma_\ell$ has two adjacent edges $\{a_1,a_2\}$ and $\{a_2,a_3\}$ with weights $m>1$ and $m'>1$, respectively. Pick a primes $p$ such that $p$ divides $m$. Fix an algebraically closed field $\bbmk$ with $\mathrm{char}(\bbmk)=p$.  Let $\Gamma_{1,2}$ denote the subgraph of $G_{\Gamma,\ell} $ with only two vertices $a_1$ and $a_2$ and the edge connecting them of weight $m$. Note that $G_{\Gamma_{1,2}}$ is a compact orbifold group with only one marked point. Then \cref{prop compact orbifold group} gives us that  $ \sV^1( G_{\Gamma_{1,2}};\bbmk)=(\bbmk^*)^2$. The natural group epimorphism $G_{\Gamma_\ell} \twoheadrightarrow  G_{\Gamma_{1,2}}$ induces an embedding $$\sV^1( G_{\Gamma_{1,2}};\bbmk) \hookrightarrow \sV^1(G_{\Gamma_\ell};\bbmk) .$$
   Set  $ \mathbb{R}_{1,2}=\{ (x_1,x_2,0,\cdots,0)\in \R^{|V|} \mid x_1,x_2\in \R\}.$   
 Then we have $$\mathbb{R}_{1,2} \subseteq \Trop_\bbmk (\sV^1(G_{\Gamma_\ell};\bbmk)) \subseteq \Trop_\Z (\cJ^1(G_{\Gamma_\ell};\Z)).$$
 Since we already knew that $\Trop_\Z (\cJ^1(G_{\Gamma_\ell};\Z))$ is a finite union of two-dimensional real vector spaces, $\R_{1,2}$ has to be one of them. 
Hence there exists an orbifold fibration   $f\colon X \to C_1$
  such that $\R_{1,2} =  \mathrm{im} (f^* \colon H^1(C_1;\R) \hookrightarrow H^1(X; \R)) . $
    Similarly, there exists another orbifold fibration   $g\colon X \to C'_1$
  such that $\R_{2,3} =  \mathrm{im} g^* (H^1(C'_1;\R) \hookrightarrow H^1(X; \R)) $ with  $\R_{2,3} = \{ (0,x_2,x_3,0,\cdots,0)\in \R^{|V|} \mid x_2,x_3\in \R\}.$ But $\dim (\R_{1,2}\cap \R_{2,3})=1$ contradicts the previous claim.
\end{proof}

\begin{remark} 
For a right-angled Artin group $G_{\Gamma}$, $\Sigma^1(G_{\Gamma})$ is computed in \cite{MV}. In particular, the first inclusion in (\ref{main inclusion}) holds as equality for $k=1$ (see \cite[Prposition 5.8]{PS06} for a proof). 
It would be interesting to compute $\Sigma^1(G_{\Gamma_\ell})$ for any weighted right-angled Artin group and check if the first inclusion in (\ref{main inclusion}) holds as equality (for $k=1$).
\end{remark}
    
\begin{remark}
Let $\nu \colon  G_{\Gamma_\ell} \to  \Z$ be the homomorphism which sends each generator $a\in V$ to 1.
When $\ell(e)=1$ for all $e\in E$, the kernel of $\nu$ is the Bestvina-Brady group. Dimca, Papadima and Suciu classified the quasi-K\"ahler Bestvina-Brady groups in \cite{DPS08}.  
%When $\ell(e)=m>1$ for all $e\in E$, the kernel of $\nu$ is not finitely generated by \cref{thm BNS}, since  $\Sigma^1(G_{\Gamma_m})=\emptyset$. In particular, it can not be quasi-K\"ahler.
It would be interesting to know which kernel of $\nu$ for $G_{\Gamma_\ell}$ is quasi-K\"ahler.
\end{remark}

%Question: For a compact K\"ahler manifold or complex smooth quasi-projective variety. Is it true that $\Sigma^k(X; \Z)$ is always ?

\section*{Acknowledgments}
The authors would like to thank Ziyun He, Laurentiu Maxim, Alexandru I. Suciu and Botong Wang for useful discussions, and thank Thomas Delzant for  \cref{rmk: delzant's cubulable}.  
Yongqiang Liu is supported by National Key Research and Development Project SQ2020YFA070080, the Project of Stable Support for Youth Team in Basic Research Field, CAS (YSBR-001),  the project ``Analysis and Geometry on Bundles'' of Ministry of Science and Technology of the People's Republic of China and  Fundamental Research Funds for the Central Universities. Yuan Liu is partially supported by the China Postdoctoral Science Foundation (No. 2023M744396) and the China Scholarship Council (No. 202406340174).

\bibliographystyle{ssmfalpha} % We choose the "plain" reference style
\bibliography{biblio}

\end{document}